\documentclass[12pt]{article}

\setlength{\textheight}{21cm}\setlength{\textwidth}{16cm}
\setlength{\topmargin}{-1cm}
\setlength{\oddsidemargin}{-0.2cm}
\setlength{\evensidemargin}{0.0cm}

\usepackage{epsfig,amsmath,amsfonts,amssymb,latexsym,color,amsthm,hhline,multirow,subfigure,graphicx,bm}
\usepackage{enumerate}

\usepackage{tikz}
\usetikzlibrary{hobby}

\newtheorem{theorem}{Theorem}
\newtheorem{conjecture}[theorem]{Conjecture}
\newtheorem{prop}[theorem]{Proposition}
\newtheorem{lemma}[theorem]{Lemma}

\newcommand{\E}{{\mathbb E}}
\newcommand{\RR}{{\mathbb R}}

\newcommand{\Z}{{\mathbb Z}}
\newcommand {\PP}{{\mathbb P}}
\newcommand{\sss}{\scriptscriptstyle}

\newcommand{\N}{\mathbb{N}}

\newcommand{\vep}{\varepsilon}

\newcommand{\Dt}{D_e^xT_t}
\newcommand{\Do}{D_e^xT_0}

\newcommand{\Ec}{\mathcal{E}}

\newcommand{\Inf}{{\textup{Inf}}}
\newcommand{\Var}{{\textup{Var}}}
\newcommand{\Cov}{{\textup{Cov}}}
\newcommand{\Corr}{{\textup{Corr}}}

\begin{document}

\title{Chaos, concentration and multiple valleys\\ in first-passage percolation}\parskip=5pt plus1pt minus1pt \parindent=0pt
\author{Daniel Ahlberg\thanks{Department of Mathematics, Stockholm University.\newline\hspace*{0.5cm}{\tt \{daniel.ahlberg\}\{mia\}\{matteo.sfragara\}@math.su.se}}\and Maria Deijfen\footnotemark[1] \and Matteo Sfragara\footnotemark[1]}
\date{October 2024}
\maketitle

\begin{abstract}
\noindent A decade and a half ago Chatterjee established the first rigorous connection between anomalous fluctuations and a chaotic behaviour of the ground state in certain Gaussian disordered systems. The purpose of this paper is to show that Chatterjee's work gives evidence of a more general principle, by establishing an analogous connection between fluctuations and chaos in the context of first-passage percolation. The notion of `chaos' here refers to the sensitivity of the time-minimising path between two points when exposed to a slight perturbation. More precisely, we resample a small proportion of the edge weights, and find that a vanishing fraction of the edges on the time-minimising path still belongs to the time-minimising path obtained after resampling. We also identify the point at which the system transitions from being stable to being chaotic in terms of the variance of the system. Finally we show that the chaotic behaviour implies the existence of a large number of almost-optimal paths that are almost disjoint from the time-minimising path, a phenomenon known as `multiple valleys'.

\vspace{0.3cm}

\noindent \emph{Keywords:} First-passage percolation, geodesics, chaos, noise sensitivity.

\vspace{0.2cm}

\noindent \emph{AMS 2010 Subject Classification:} 60K35.
\end{abstract}

\section{Introduction}\label{sec:intro}

It has been known for quite some time that many interesting functions involving a large number of independent variables, each having little influence on the outcome as a whole, exhibit fluctuations at a lower order than what classical techniques would prescribe. This phenomenon has come to be referred to as \emph{anomalous fluctuations} or \emph{superconcentration}. While a robust theory for the concentration of measure has come to explain this phenomenon, its potential for improvement upon classical techniques is inhibited by its generality; see e.g.~\cite{boulugmas13,chatterjee14a}. Understanding the mechanism that dictates the order of fluctuations remains one of the most important open problems in stochastic models for spatial growth, such as first-passage percolation.

The study of how percolative systems are affected when exposed to noise or dynamics offers a rich structure and a plethora of interesting phenomena; see e.g.~\cite{garste15}. A fascinating connection between sensitivity to noise and fluctuations in stochastic growth was discovered in the seminal work of Benjamini, Kalai and Schramm~\cite{benkalsch99,benkalsch03}, via the study of influences. In two later preprints, Chatterjee~\cite{chatterjee1,chatterjee2} established a connection between superconcentration and a chaotic behaviour of the ground state in certain Gaussian disordered systems. (The two preprints were later combined and published as a book~\cite{chatterjee14a}, which we henceforth refer to.) The purpose of this paper is to show that Chatterjee's discovery gives evidence of a much more general principle, by establishing an analogous connection between fluctuations and chaos in the context of first-passage percolation.

Since the distance-metric in first-passage percolation is known to be superconcentrated, we are able to provide the first evidence of chaos in this setting. Our results illustrate once again that various aspects of first-passage percolation cannot be fully understood without also understanding the behaviour of distance-minimising paths (i.e.\ geodesics) of the metric.

\subsection{Model description}

First-passage percolation was introduced in the 1960s by Hammersley and Welsh~\cite{hamwel65}, and has since become an archetype among stochastic models for spatial growth. The model is constructed from a graph, typically the $\Z^d$ nearest neighbour lattice, for some $d\ge2$, and i.i.d.\ non-negative weights $\omega(e)$ assigned to the edges $e$ of that graph. The weights may be interpreted as traversal times for some infectious phenomena, and induce a random metric $T$ on the vertex set of the graph, defined as follows:\footnote{If weights may attain the value zero with positive probability, then $T$ is a pseudo-metric.} For $u,v\in\Z^d$ let
\begin{equation}\label{eq:firstTdef}
T(u,v):=\inf\bigg\{\sum_{e\in\Gamma}\omega(e):\Gamma\text{ is a nearest-neighbour path from $u$ to }v\bigg\}.
\end{equation}
Any nearest-neighbour path $\pi$ connecting $u$ to $v$ whose weight sum attains the infimum in~\eqref{eq:firstTdef} is referred to as a \emph{geodesic} between $u$ and $v$.

The occurrence of chaotic phenomena in models of disordered systems such as spin glasses was predicted by physicists in the 1980s~\cite{bramoo87,fishus86}. In the context of first-passage percolation, the analogous notion of `chaos' refers to the sensitivity of geodesics as the first-passage metric is exposed to small perturbations. In order to address this behaviour properly, we shall be interested in a dynamical version of first-passage percolation, where edges update their weight over time, according to independent uniform clocks. We let $\Ec$ denote the set of nearest-neighbour edges of the $\Z^d$ lattice, where $d\ge2$ is fixed. Let $\{\omega(e)\}_{e\in\Ec}$ and $\{\omega'(e)\}_{e\in\Ec}$ be two independent i.i.d.\ families of non-negative edge weights with common distribution $F$, and let $\{U(e)\}_{e\in\Ec}$ be an independent collection of independent variables uniformly distributed on the interval $[0,1]$. We define a \emph{dynamical} weight configuration $\omega_t=\{\omega_t(e)\}_{e\in\Ec}$ for $t\in[0,1]$ as
\begin{equation}\label{passage_times}
\omega_t(e):=\left\{
\begin{array}{ll}
\omega(e) & \mbox{if }U(e)>t,\\
\omega'(e) & \mbox{if }U(e)\leq t.
  \end{array}
            \right.
\end{equation}
In addition, we define the first-passage time between $u,v\in\Z^d$ at time $t\in[0,1]$ as
\begin{equation}\label{eq:Tdef}
T_t(u,v):=\inf\bigg\{\sum_{e\in\Gamma}\omega_t(e):\Gamma\text{ is a nearest-neighbour path from $u$ to }v\bigg\}.
\end{equation}

For fixed $t\in[0,1]$ the weight configuration $\omega_t$ consists of independent variables distributed as $F$, and hence $T_t(u,v)$ is equidistributed for all $t\in[0,1]$. We shall henceforth assume that $F$ has finite second moment and does not put too much mass at zero-weight edges, although some of our results hold under weaker assumptions. More specifically, we shall assume
\begin{equation}\label{eq:pc_moment}
F(0)<p_c(d)\quad\text{and}\quad\int x^2\,dF(x)<\infty,
\end{equation}
where $p_c(d)$ denotes the critical value for bond percolation on $\Z^d$. The former condition is mainly present in order to guarantee that the infimum in~\eqref{eq:Tdef} is attained for some path $\Gamma$, so that a geodesic between $u$ and $v$ exists almost surely; see e.g.~\cite[Section~4.1]{aufdamhan17}.

When $F$ is continuous, there exists an almost surely unique geodesic between any two points. In this case we denote by $\pi_t(u,v)$ the geodesic between $u$ and $v$ at time $t\in[0,1]$, i.e.\ the path attaining the infimum in~\eqref{eq:Tdef}. When $F$ has atoms, there may be multiple paths attaining the infimum in~\eqref{eq:Tdef}. In this case we define $\pi_t(u,v)$ as the intersection of all geodesics between $u$ and $v$, i.e.\
\begin{equation}\label{eq:pi_def}
\pi_t(u,v):=\big\{e\in\Ec:e\in\Gamma\text{ for every $\Gamma$ that attains the infimum in~\eqref{eq:Tdef}}\big\}.
\end{equation}
Of course, when the geodesic between $u$ and $v$ is unique, then $\pi_t(u,v)$ denotes this geodesic, so the latter definition of $\pi_t(u,v)$ is indeed an extension of the former. In either case, assuming only~\eqref{eq:pc_moment}, it is a known fact that there exists $c>0$ such that
\begin{equation}\label{eq:linear}
c|v|\le\E[|\pi_t(0,v)|]\le \frac{1}{c}|v|\quad\text{for all }v\in\Z^d;
\end{equation}
see e.g.~\cite[Section~4.1]{aufdamhan17}.

A notoriously challenging problem in first-passage percolation, and more generally for models of spatial growth and disordered systems, is to determine the order of fluctuations of the random metric $T$. In two dimensions, first-passage percolation is believed to pertain to the KPZ-class of universality, which would suggest that $T(0,v)$ should typically fluctuate at order $|v|^{1/3}$ around its mean. In higher dimensions the fluctuations are believed to be smaller still. Classical techniques, such as an Efron-Stein estimate, give a linear upper bound on the variance of the first-passage metric. The best current bounds provide a modest improvement upon the classical techniques, and yield that for every $d\ge2$ there exists a constant $C<\infty$ such that, for all $v\in\Z^d$, we have
\begin{equation}\label{eq:sublin}
\Var\big(T(0,v)\big)\le C\frac{|v|}{\log |v|}.
\end{equation}
The bound in~\eqref{eq:sublin} was first obtained by Benjamini, Kalai and Schramm~\cite{benkalsch03} in the special case of $\{a,b\}$-valued edge weights, for $0<a<b<\infty$. Their result was later extended to a larger class of weight distributions by Bena\"im and Rossignol~\cite{benros,benros08} and Damron, Hanson and Sosoe~\cite{damhansos15}, and is now known to hold for weight distributions satisfying
\begin{equation}\label{eq:var_cond}
F(0)<p_c(d)\quad\text{and}\quad\int x^2(\log x)_+\,dF(x)<\infty.
\end{equation}
Polynomial improvement on the bound in~\eqref{eq:sublin} is widely recognised as one of the most central open problems in the study of first-passage percolation.

\subsection{Main results}

Chatterjee's work on disordered Gaussian systems~\cite{chatterjee14a} suggests that there is a connection between fluctuations of $T(0,v)$ and the dynamical behaviour of the geodesic $\pi_t(0,v)$ through the relation\footnote{Here, for sequences $(a_v)$ and $(b_v)$ of positive real numbers indexed by $\Z^d$, the relation $a_v\asymp b_v$ denotes that there exist constants $0<C_1,C_2<\infty$ such that $C_1b_v\leq a_v\leq C_2b_v$ for all $v$.}
\begin{equation}\label{eq:fluct_geos}
\Var\big(T(0,v)\big)\asymp\int_0^1\E\big[\big|\pi_0(0,v)\cap\pi_t(0,v)\big|\big]\,dt.
\end{equation}
A standard coupling argument shows that the expected overlap of the two paths in the right-hand side of~\eqref{eq:fluct_geos} is decreasing as a function of $t$ (see Lemma \ref{lma:overlap_monotonicity}). Consequently, it is a straightforward consequence from~\eqref{eq:fluct_geos} that sublinear variance growth $\Var(T(0,v))=o(|v|)$ is equivalent to sublinear growth of the overlap $\E[|\pi_0(0,v)\cap\pi_t(0,v)|]=o(|v|)$ for $t>0$ fixed. In view of the linear growth of geodesics~\eqref{eq:linear}, the latter is an expression of the chaotic nature of the first-passage metric as referred to above. In combination with the variance bound~\eqref{eq:sublin}, it will suffice to establish~\eqref{eq:fluct_geos} in order to deduce the chaotic behaviour of the first-passage metric. This is the essence of Chatterjee's work for certain disordered Gaussian systems~\cite{chatterjee14a}.

Extrapolating from~\eqref{eq:fluct_geos}, one is led to believe in the slightly more general formula (the two coincide for $t=1$) stating that\footnote{Here we say that the relation $a_v(t)\asymp b_v(t)$ holds if there exist constants $0<C_1,C_2<\infty$ such that $C_1b_v(t)\leq a_v(t)\leq C_2b_v(t)$ for all $v\in\Z^d$ and $t\in[0,1]$.}
\begin{equation}\label{eq:cov_geos}
\E\Big[\big(T_0(0,v)-T_t(0,v)\big)^2\Big]\asymp\int_0^t\E\big[\big|\pi_0(0,v)\cap\pi_s(0,v)\big|\big]\,ds.
\end{equation}
Again working in a Gaussian setting, Ganguly and Hammond~\cite{ganham24} noted that the more general formula in~\eqref{eq:cov_geos} would imply that the system undergoes a transition from `stability' to `chaos' at $t\asymp\frac{1}{|v|}\Var(T(0,v))$, in that~\eqref{eq:cov_geos} together with~\eqref{eq:linear} and the monotonicity of the integrand imply that the following holds, as $|v|\to\infty$.
\begin{enumerate}[\quad (i)]
\item For $t\ll\frac{1}{|v|}\Var(T(0,v))$ we have $\Corr\big(T_0(0,v),T_t(0,v)\big)\ge1-o(1)$.
\item For $t\gg\frac{1}{|v|}\Var(T(0,v))$ we have $\E\big[\big|\pi_0(0,v)\cap\pi_t(0,v)\big|\big]\le o(|v|)$.
\end{enumerate}
(We show how to derive a more precise version of (i)-(ii) from~\eqref{eq:cov_geos} in Proposition~\ref{prop:transition} below.) Consequently, in view of the variance bound~\eqref{eq:sublin}, establishing~\eqref{eq:cov_geos} will suffice in order to establish chaos in first-passage percolation for fixed $t>0$, and a transition from stability to chaos at $t\asymp\frac{1}{|v|}\Var(T(0,v))$.
Our first theorem achieves precisely this in the integer-valued setting.

\begin{theorem}\label{th:integer}
If $F$ is supported on $\{0,1,2,\ldots\}$ and satisfies~\eqref{eq:pc_moment}, then~\eqref{eq:cov_geos} holds for $t\in[0,1]$, and the system transitions from stability to chaos at $t\asymp\frac{1}{|v|}\Var(T(0,v))$ in the sense that (i)-(ii) above hold. Moreover, if also~\eqref{eq:var_cond} holds, then we have for $t\gg1/\log|v|$ that as $|v|\to\infty$
$$
\E\big[\big|\pi_0(0,v)\cap \pi_t(0,v)\big|\big]=o(|v|).
$$
\end{theorem}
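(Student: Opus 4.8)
The plan is to first establish the exact asymptotics \eqref{eq:cov_geos}, after which the transition (i)--(ii) and the last assertion follow with little extra work. Abbreviate $T_t=T_t(0,v)$, $\pi_t=\pi_t(0,v)$, and write $T=T_0$, regarded as a function of the edge weights. The starting point is a Chatterjee-type identity adapted to the dynamics \eqref{passage_times}. Let $S_t=\{e:U(e)\le t\}$ be the set of edges refreshed by time~$t$. Conditioning on $S_t$, and using that $\omega$ and $\omega'$ are i.i.d.\ and that $T_0,T_t$ are equidistributed, one obtains
\begin{equation*}
\tfrac12\,\E\big[(T_0-T_t)^2\big]=\Var(T)-\Cov(T_0,T_t)=\E_{S_t}\Big[\E\big[\Var\big(T\mid\omega_{S_t^c}\big)\big]\Big].
\end{equation*}
Since $\E_{S_t}[\,\cdot\,]$ is an expectation over a product measure in which each edge lies in $S_t$ with probability~$t$, differentiation in $t$ yields the combinatorial identity
\begin{equation*}
-\frac{d}{dt}\,\Cov(T_0,T_t)=\sum_{e}\E_{S_t^{(e)}}\big[\rho_e(S_t^{(e)})\big],\qquad
\rho_e(S):=\E\Big[\Var\big(\E[T\mid\omega_{S^c}]\ \big|\ \omega_{S^c\setminus e}\big)\Big],
\end{equation*}
where $S_t^{(e)}$ is the random set in which every edge other than~$e$ is included with probability~$t$. (Both identities are first established inside a large box containing the relevant geodesics and then extended by a routine limiting argument, all relevant quantities being uniformly controlled by the classical Efron--Stein bound, which is of order $|v|$.) As $\rho_e(S)\ge0$ this in particular makes $\Cov(T_0,T_t)$ non-increasing. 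Integrating in~$t$, and noting that both sides of \eqref{eq:cov_geos} vanish at $t=0$, reduces \eqref{eq:cov_geos} to the assertion that, uniformly over $v\in\Z^d$ and $t\in[0,1]$,
\begin{equation*}
\sum_{e}\E_{S_t^{(e)}}\big[\rho_e(S_t^{(e)})\big]\ \asymp\ \E\big[\big|\pi_0(0,v)\cap\pi_t(0,v)\big|\big].
\end{equation*}

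The heart of the matter---and the only place where the integer-valued hypothesis is genuinely used---is to match the discrete influence $\rho_e(S)$ with the probability that~$e$ lies on the relevant geodesics. Viewed as a function of the single weight $\omega(e)$, the first-passage time is the minimum of an affine function (over paths through~$e$) and a constant (over paths avoiding~$e$): $T=C(\omega_{\neq e})+\min\big(\omega(e),\,x^\ast(\omega_{\neq e})\big)$. Hence $x\mapsto\E[T\mid\omega(e)=x,\,\cdot\,]$ is concave, non-decreasing and $1$-Lipschitz, its increment at~$x$ equalling the probability that~$e$ lies on every geodesic when $\omega(e)=x$, which happens precisely when $x<x^\ast$. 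When $F$ is supported on the integers the cut-off $x^\ast$ is itself integer-valued, so this increment profile is fully resolved at the natural scale, and one obtains matching two-sided bounds (with $F$-dependent constants; the case of a degenerate $F$ being trivial) showing that $\Var_{\omega(e)}\big(\E[T\mid\omega(e),\,\cdot\,]\big)$ is comparable to the square of the probability that~$e$ lies on a geodesic of the partially refreshed configuration---the square because $\rho_e$ is a covariance between two independent refreshings of~$S$. Averaging over $S=S_t^{(e)}$ and identifying the two refreshings with $\omega_0$ and $\omega_t$ (which needs a little care with the weight at~$e$ itself, handled via the elementary comparison $F(a\wedge b)\asymp F(a)F(b)$ on the support of~$F$) gives $\E_{S_t^{(e)}}[\rho_e(S_t^{(e)})]\asymp\PP(e\in\pi_0\cap\pi_t)$, and summation over~$e$ yields the displayed comparison, hence \eqref{eq:cov_geos}.

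I expect the main obstacle to be the lower half of this comparison, i.e.\ showing that a non-negligible influence forces~$e$ onto \emph{both} $\pi_0$ and $\pi_t$ with commensurate probability. Crude bounds on $\Var_{\omega(e)}(\,\cdot\,)$ in terms of $\PP(x^\ast\ge x_{\min})$ overcount the "borderline" configurations in which $x^\ast$ sits at the bottom of the support of~$F$; for a matching lower bound one must instead use the full increment profile of the concave map $x\mapsto\E[T\mid\omega(e)=x,\,\cdot\,]$, together with the fact that the event $\{e\in\pi_0\}$ already carries the constraint $\omega(e)<x^\ast$. Making this quantitative, uniformly in~$v$ and~$t$, is where the integer structure has to be used with care.

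Once \eqref{eq:cov_geos} is available the remaining assertions are short. The transition (i)--(ii) follows from \eqref{eq:cov_geos}, \eqref{eq:linear} and the monotonicity of $s\mapsto\E|\pi_0(0,v)\cap\pi_s(0,v)|$ from Lemma~\ref{lma:overlap_monotonicity}, exactly as in Proposition~\ref{prop:transition}: for (ii), $\E|\pi_0\cap\pi_t|\le\frac1t\int_0^t\E|\pi_0\cap\pi_s|\,ds\le\frac{1}{C_1t}\E[(T_0-T_t)^2]\le\frac{2}{C_1t}\Var(T)$, which is $o(|v|)$ once $t\gg\Var(T)/|v|$; for (i), $1-\Corr(T_0,T_t)=\frac{1}{2\Var(T)}\E[(T_0-T_t)^2]\le\frac{C_2}{2\Var(T)}\int_0^t\E|\pi_0\cap\pi_s|\,ds\le\frac{C_2 t}{2\Var(T)}\E|\pi_0|$, which is $o(1)$ once $t\ll\Var(T)/|v|$ by \eqref{eq:linear}. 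For the final assertion, assume additionally \eqref{eq:var_cond}, so that \eqref{eq:sublin} gives $\Var(T(0,v))\le C|v|/\log|v|$. Since $\Cov(T_0,T_t)\ge0$ we have $\E[(T_0-T_t)^2]\le2\Var(T(0,v))$, so the lower bound in \eqref{eq:cov_geos} yields $\int_0^t\E|\pi_0\cap\pi_s|\,ds\le\frac{2C}{C_1}\frac{|v|}{\log|v|}$ for every $t\in[0,1]$; combining with the monotonicity of the integrand,
\begin{equation*}
\E\big[\big|\pi_0(0,v)\cap\pi_t(0,v)\big|\big]\ \le\ \frac1t\int_0^t\E\big[\big|\pi_0(0,v)\cap\pi_s(0,v)\big|\big]\,ds\ \le\ \frac{2C}{C_1}\cdot\frac{|v|}{t\log|v|}\ =\ o(|v|)
\end{equation*}
whenever $t\gg1/\log|v|$, which is the claim.
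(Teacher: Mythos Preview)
Your covariance identity is correct and, once unpacked, is the paper's formula~\eqref{eq:varform} in disguise: conditioning on the refreshed set and differentiating yields $-\tfrac{d}{dt}\Cov(T_0,T_t)=\sum_e\E_{S_t^{(e)}}[\rho_e(S_t^{(e)})]$, and a short computation shows $\E_{S_t^{(e)}}[\rho_e(S_t^{(e)})]=\int\E[D_e^xT_0\,D_e^xT_t]\,dF(x)=\Inf_e(T_0,T_t)$ exactly (the ``square'' you mention is just the identity $\E[D_e^xT_0D_e^xT_t\mid S,\omega_{S^c\setminus e}]=(D_e^xg_S)^2$). Your deductions of (i)--(ii) and of the final $o(|v|)$ bound from~\eqref{eq:cov_geos} are also correct and match Proposition~\ref{prop:transition} verbatim. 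So at the structural level you and the paper agree.

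Where you diverge is in the key step $\Inf_e(T_0,T_t)\asymp\PP(e\in\pi_0\cap\pi_t)$, which you describe only heuristically. Your route passes through $\Var_{\omega(e)}(g_S)$, where $g_S(x)=\E_{\omega_S}[\min(x,x^\ast)]$ is concave with increment $p(k)=\PP_{\omega_S}(x^\ast>k)$ at each integer $k$; the target comparison is then $\Var_{\omega(e)}(g_S)\asymp\E_{\omega(e)}[p(\omega(e))^2]$ uniformly over all non-increasing profiles $p$. This is provable with $F$-dependent constants, but it is not the one-liner you suggest, and your phrase ``square of the probability'' does not quite match this target. The paper sidesteps the averaging over $\omega_S$ altogether: it keeps the un-averaged, piecewise-linear functions $x\mapsto D_e^xT_0,\,D_e^xT_t$ and uses the key fact---which you do not invoke---that $\E[D_e^xT_0\,D_e^xT_t]\ge0$ for \emph{every fixed} $x$, by the monotone coupling of Lemma~\ref{lma:monotonicity} applied to $D_e^xT$. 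This lets one discard all $x\neq r$ and use the one-line bound $D_e^rT_t\le-(1-F(r))\mathbf{1}_{\{Z_t(e)>r\}}$, which is precisely where integrality enters. Together with $\{e\in\pi_t\}\subseteq\{Z_t(e)>r\}$ this gives the lower bound immediately; the upper bound comes from $|D_e^xT_t|\le(\mu+x)\mathbf{1}_{\{Z_t(e)>r\}}$ and the independence of $\{Z_0>r,\,Z_t>r\}$ from $\{\omega_0(e)=\omega_t(e)=r\}$, which plays the role of your $F(a\wedge b)\asymp F(a)F(b)$. The moral: by averaging over $\omega_S$ first you trade an integer-valued cutoff $Z_t(e)$ for a smooth profile $p(\cdot)$, and must then work harder to recover the discreteness; working pointwise in $x$ keeps the integer structure visible throughout and turns the ``main obstacle'' you flag into two lines.
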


Our second result covers all continuous distributions, and more generally all weight distributions that do not have any mass at the infimum of its support, i.e.\ satisfying $F(r)=0$, where
$$
r:=\inf\{x\ge0:F(x)>0\}.
$$
In this case we have not been able to establish a relation between fluctuations and geodesics that is quite as precise as~\eqref{eq:fluct_geos} and~\eqref{eq:cov_geos}. It is nevertheless sufficient to deduce that the transition from stability to chaos occurs at $t\asymp\frac{1}{|v|}\Var(T)$ also here.

\begin{theorem}\label{th:cont} If $F$ has finite second moment and $F(r)=0$, then, for fixed $\varepsilon>0$, we have\footnote{Here the relation $a_v\asymp b_v\pm\vep|v|$ is said to hold for fixed $\vep>0$ if there exist constants $0<C_1,C_2<\infty$, possibly depending on $\vep$, such that $C_1(b_v-\vep|v|)\leq a_v\leq C_2(b_v+\vep|v|)$ for all $v\in\Z^d$.}
\begin{equation}\label{eq:th_cont}
\Var\big(T(0,v)\big)\asymp \int_0^1\E\big[\big|\pi_0(0,v)\cap \pi_t(0,v)\big|\big]\,dt\pm \vep|v|.
\end{equation}
Moreover, the system undergoes a transition from stability to chaos at $t\asymp\frac{1}{|v|}\Var(T)$ in the sense that (i)-(ii) above hold.
If also~\eqref{eq:var_cond} holds, then we have, in particular, for $t\gg1/\log|v|$ that as $|v|\to\infty$
$$\E\big[\big|\pi_0(0,v)\cap \pi_t(0,v)\big|\big]=o(|v|).$$
\end{theorem}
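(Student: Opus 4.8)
The plan is to prove Theorem~\ref{th:cont} by first establishing the two-sided bound in~\eqref{eq:th_cont} and then deriving the transition (i)--(ii) and the final $o(|v|)$ statement as corollaries of that bound combined with~\eqref{eq:sublin} and~\eqref{eq:linear}, exactly in the manner indicated in the discussion leading up to Proposition~\ref{prop:transition}. The main work is thus the approximate covariance identity for distributions with no atom at the bottom of the support.

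For the upper bound $\Var(T(0,v))\le C\int_0^1\E[|\pi_0\cap\pi_t|]\,dt+C\vep|v|$, I would use the standard decomposition of the variance along the clock dynamics: write $T_0-T_1$ as a sum of increments accumulated as the uniform clocks $U(e)$ ring, and apply the orthogonality of martingale increments (or, equivalently, a Mehler-type / Markov semigroup identity for the dynamics defined by~\eqref{passage_times}), to get $\Var(T(0,v))=\int_0^1\frac{d}{dt}\Cov(T_0,T_t)\,dt$ up to the appropriate combinatorial constant. The derivative of the covariance is governed by the rate at which resampling an edge $e$ changes $T_t(0,v)$, which by a deterministic one-edge perturbation argument is controlled by the indicator that $e$ lies on a geodesic. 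The subtlety particular to the continuous case, and the reason for the $\pm\vep|v|$ slack, is that when $F$ has no mass at $r$ one cannot lower a passage time by resampling an edge to the value $r$ with positive probability; one must instead resample to a value within $\vep$ of $r$, which happens with probability $F(r+\vep)>0$, and this only lets one detect edges whose removal would change $T$ by at least some amount comparable to the fluctuation of a single weight below $\vep$. So the identity one actually proves relates the variance to the expected number of edges on $\pi_t$ that are ``$\vep$-pivotal,'' and the difference between this count and $|\pi_0\cap\pi_t|$ is bounded by $\vep|v|$ using~\eqref{eq:linear} and a crude bound on how many geodesic edges can fail to be $\vep$-pivotal. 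This truncation/quantitative-pivotality estimate is where the $F(r)=0$ hypothesis enters and is, I expect, the main obstacle.

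For the lower bound $\Var(T(0,v))\ge c\int_0^1\E[|\pi_0\cap\pi_t|]\,dt-c\vep|v|$ I would again use the semigroup decomposition, but now bound the derivative of the covariance from below: resampling an edge $e$ on the current geodesic to a value at least $\vep$ larger than its present value forces the new first-passage time to strictly increase (the old geodesic is destroyed), and the size of this increase, while possibly small, is at least a fixed positive amount with probability bounded below, except on an exceptional set of edges whose contribution is again $O(\vep|v|)$. Summing these contributions and integrating over $t$, and invoking the monotonicity of the overlap from Lemma~\ref{lma:overlap_monotonicity} to pass between $\int_0^t$ and $\int_0^1$ where needed, yields the claimed inequality. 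Throughout, I would be careful that when $F$ has atoms the object $\pi_t$ is the intersection of geodesics defined in~\eqref{eq:pi_def}, so ``$e\in\pi_t$'' means every geodesic uses $e$, which is exactly the event on which a single-edge perturbation is guaranteed to move $T_t$; this is what makes the perturbation arguments go through uniformly.

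Finally, to deduce the transition: given~\eqref{eq:th_cont} and the monotonicity of $t\mapsto\E[|\pi_0\cap\pi_t|]$, for $t\gg\frac1{|v|}\Var(T)$ the integral $\int_0^1\E[|\pi_0\cap\pi_t|]\,ds$ would be at least $t\cdot\E[|\pi_0\cap\pi_t|]$ up to constants, so $\E[|\pi_0\cap\pi_t|]\le C(\Var(T)/t+\vep|v|)/ t\cdot\frac1?$---more carefully, one rearranges $\E[|\pi_0\cap\pi_t|]\le\frac1t\int_0^t\E[|\pi_0\cap\pi_s|]\,ds\le\frac1t\big(C\Var(T)+C\vep|v|\big)$, and then lets $\vep\to0$ after sending $|v|\to\infty$ along the regime $t\gg\frac1{|v|}\Var(T)$ to get $\E[|\pi_0\cap\pi_t|]=o(|v|)$, which is (ii); part (i) follows symmetrically from the upper bound on $\E[(T_0-T_t)^2]$ that the same analysis yields, together with $\Var(T_t)=\Var(T_0)$ and the definition of correlation. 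The last display with $t\gg1/\log|v|$ is then immediate by combining (ii) with the sublinear variance bound~\eqref{eq:sublin} valid under~\eqref{eq:var_cond}. I expect the transition part to be essentially bookkeeping once~\eqref{eq:th_cont} is in hand; the genuine difficulty is isolating the right quantitative notion of pivotality so that the error is controllably $O(\vep|v|)$ under the sole assumption $F(r)=0$.
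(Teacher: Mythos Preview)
Your high-level strategy matches the paper's: express $\Var(T)$ via the dynamical covariance formula (Proposition~\ref{prop:cov_infinite}), relate the co-influences $\Inf_e(T_0,T_t)$ to $\PP(e\in\pi_0\cap\pi_t)$, sum and integrate, and then read off the transition from monotonicity (Lemma~\ref{lma:overlap_monotonicity}) and~\eqref{eq:sublin}. Your derivation of (i)--(ii) and of the final $o(|v|)$ display is essentially the paper's argument.

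Two points, however, deserve correction. First, the $\vep|v|$ slack in the paper appears \emph{only in the lower bound} on $\Var(T)$; the upper bound is clean. The paper proves (Proposition~\ref{prop:cont_upper}) that $\Inf_e(T_0,T_t)\le \frac{C}{1-t}\PP(e\in\pi_0\cap\pi_t)$ with no error, by a careful analysis of the function $x\mapsto D_e^xT_t$ (via the auxiliary variables $Z_t(e),Y_t(e),H_t(e)$ of Figure~\ref{fig:DTt_YZH}). Your explanation that the upper bound needs an $\vep$-truncation because one cannot resample to the value $r$ is not how the argument goes; that reasoning is closer in spirit to the \emph{lower} bound (Proposition~\ref{prop:inf_lower_bound}), where one shows $\Inf_e(T_0,T_t)\ge C(\vep)\big[\PP(e\in\pi_0\cap\pi_t)-2\PP(e\in\pi_0,\,\omega_0(e)\le r+\vep)\big]$.

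Second, and more seriously, the step you describe as ``a crude bound on how many geodesic edges can fail to be $\vep$-pivotal'' is a genuine gap. After summing the lower bound over $e$, the error term is $\E\big[\#\{e\in\pi_0:\omega_0(e)\le r+\vep\}\big]$. A crude bound via~\eqref{eq:linear} gives only $\le C|v|$, which is useless: one cannot invoke independence of $\omega(e)$ and $\{e\in\pi\}$, since the geodesic is biased toward low-weight edges. The paper instead imports a nontrivial result from the theory of greedy lattice animals (Lemma~\ref{lma:small_weights}), namely $\E\big[\#\{e\in\pi:\omega(e)\le r+\vep\}\big]\le C|v|\,F(r+\vep)^{1/d}$, which is $o(|v|)$ precisely because $F(r)=0$. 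Without this input the lower half of~\eqref{eq:th_cont} does not close, and neither does your derivation of (ii), since the residual term in your displayed inequality for $\E[|\pi_0\cap\pi_t|]$ would not vanish.
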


We believe that the relations~\eqref{eq:fluct_geos} and~\eqref{eq:cov_geos} hold without error term also in the setting of Theorem~\ref{th:cont}, but a more detailed analysis may be required to verify this.

Our third result shows that the chaotic behaviour of the first-passage metric implies a `multiple valleys' property, i.e.\ that between any pair of distant points there are many almost disjoint paths that are almost optimal. The almost optimal paths correspond to multiple local valleys in the `energy landscape' that reach close to the depth of the global minimum. For simplicity, we state this result only in the case when $F$ is continuous, so that there is an almost surely unique geodesic between any pair of points.

We write $T(\Gamma):=\sum_{e\in\Gamma}\omega(e)$ for the passage time of a path $\Gamma$.

\begin{theorem}\label{th:mv}
Assume that $F$ is continuous and satisfies~\eqref{eq:var_cond}. For every $v\in\Z^d$, with probability tending to one as $|v|\to\infty$, there exists a set of paths $\{\Gamma_i\}$ from $0$ to $v$ which satisfies $|\{\Gamma_i\}|\to\infty$, $|\Gamma_i\cap\Gamma_j|=o(|v|)$, and $T(\Gamma_i)-T(0,v)=o(|v|)$.
\end{theorem}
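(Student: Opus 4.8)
\emph{Proof proposal.} The plan is to deduce Theorem~\ref{th:mv} from the chaos already established: Theorems~\ref{th:integer} and~\ref{th:cont} say that geodesics taken at two times separated by more than $1/\log|v|$ have expected overlap $o(|v|)$, so the natural candidate for the family $\{\Gamma_i\}$ is the collection of geodesics $\pi_{t_i}(0,v)$ at a growing number of small, well-separated times $0<t_1<\dots<t_k$ along the dynamics~\eqref{passage_times}, and the task is to check that, with probability tending to one, these are pairwise almost disjoint and almost optimal for the weights $\omega=\omega_0$. Since $F$ is continuous, each $\pi_{t_i}(0,v)$ is almost surely a well-defined path from $0$ to $v$; because these are legitimate candidate paths, the event that the $\Gamma_i$ enjoy the three required properties is contained in the event (depending only on $\omega$) that \emph{some} admissible family exists, and averaging out $(\omega',U)$ shows that a lower bound on the probability of the former on the enlarged space yields the asserted lower bound in the static model. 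So it suffices to analyse this explicit family.

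Concretely, I would fix $\delta=\delta(v)$ with $\delta\to0$ and $\delta\log|v|\to\infty$, set $t_i:=i\delta$, $\Gamma_i:=\pi_{t_i}(0,v)$ for $i=1,\dots,k$, and write $R_i:=\{e:U(e)\le t_i\}$ for the set of edges resampled by time $t_i$. For the \emph{almost-disjoint} estimate, the pair $(\omega_{t_i},\omega_{t_j})$ has the same joint law as $(\omega_0,\omega_{|t_i-t_j|})$, so by Lemma~\ref{lma:overlap_monotonicity} (monotonicity of the expected overlap) and $|t_i-t_j|\ge\delta$,
$$
\E\big[\big|\Gamma_i\cap\Gamma_j\big|\big]=\E\big[\big|\pi_0(0,v)\cap\pi_{|t_i-t_j|}(0,v)\big|\big]\le\E\big[\big|\pi_0(0,v)\cap\pi_{\delta}(0,v)\big|\big]=o(|v|),
$$
the last equality being the final display of Theorem~\ref{th:cont}, applicable since continuity of $F$ forces $F(r)=0$ and~\eqref{eq:var_cond} supplies the moment condition.

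For the \emph{almost-optimal} estimate, since $\pi_{t_i}$ minimises $T_{t_i}$ the middle term in a telescoping expansion is nonpositive and can be discarded, giving
$$
T(\Gamma_i)-T(0,v)\le\big(T_0(\Gamma_i)-T_{t_i}(\Gamma_i)\big)+\big(T_{t_i}(\pi_0(0,v))-T_0(\pi_0(0,v))\big)\le\sum_{e\in\Gamma_i\cap R_i}\omega(e)+\sum_{e\in\pi_0(0,v)\cap R_i}\omega'(e).
$$
Both sums on the right have small expectation: $\pi_0(0,v)$ and $R_i$ are independent of $\omega'$ and of each other, while conditionally on the configuration $\omega_{t_i}$ the events $\{U(e)\le t_i\}$ are i.i.d.\ Bernoulli($t_i$), independent of $\omega_{t_i}$ (hence of $\Gamma_i$), and $\omega(e)$ is a fresh $F$-sample on $\{U(e)\le t_i\}$. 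With $\mu:=\E[\omega(e)]<\infty$ and~\eqref{eq:linear} this yields $\E[T(\Gamma_i)-T(0,v)]\le 2\mu\,t_i\,\E[|\pi_0(0,v)|]\le 2\mu k\delta|v|/c$. Combining the two estimates by Markov's inequality and a union bound, for fixed $\vep>0$ the chance that some pair has $|\Gamma_i\cap\Gamma_j|>\vep|v|$ is at most $k^2\,o(1)$ and the chance that some $\Gamma_i$ has $T(\Gamma_i)-T(0,v)>\vep|v|$ is $O(k^2\delta)$; choosing, say, $\delta(v):=(\log|v|)^{-1/2}$ and then $k=k(v)\to\infty$ slowly enough that $k^2\delta\to0$ and $k^2\E[|\pi_0(0,v)\cap\pi_\delta(0,v)|]/|v|\to0$, all three properties hold simultaneously with probability tending to one, with $o(|v|)$ replaced by $\vep|v|$. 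A standard diagonalisation, letting $\vep=\vep(v)\to0$ slowly, upgrades this to genuine $o(|v|)$ bounds; on the resulting good event $|\Gamma_i\cap\Gamma_j|<|v|\le|\Gamma_i|$, so the $\Gamma_i$ are pairwise distinct and $|\{\Gamma_i\}|=k\to\infty$.

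The step I expect to be the main obstacle is the cost estimate in the third paragraph: one must argue, via the conditional independence of the resampling events given $\omega_{t_i}$, that replacing the true geodesic by $\pi_{t_i}(0,v)$ inflates the passage time by only $O(t_i|v|)$ rather than $O(|v|)$, which is precisely what forces all $t_i$ to tend to $0$ and thereby couples the choices of $k(v)$ and $\delta(v)$ to the (unquantified) rate in the chaos statement. Everything else is a routine union bound and diagonalisation, and the genuinely new input---that well-separated geodesics are almost disjoint---is supplied entirely by Theorems~\ref{th:integer}--\ref{th:cont}.
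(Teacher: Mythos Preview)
Your proposal is correct and follows the same underlying strategy as the paper: take perturbed geodesics as the candidate family $\{\Gamma_i\}$, bound their pairwise overlap via the chaos estimate, and bound their excess cost by the observation that only a fraction $t_i$ of the edges have been resampled. Your telescoping bound and the conditional-independence argument for $\sum_{e\in\Gamma_i\cap R_i}\omega(e)$ are both valid.

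There are two minor differences in execution worth noting. First, the paper produces the $k$ configurations via $k$ \emph{independent} perturbations $\omega_t^{(1)},\ldots,\omega_t^{(k)}$ of the single initial configuration $\omega$ (all at the same $t$), so that any pair $(\omega_t^{(i)},\omega_t^{(j)})$ has the law of $(\omega_0,\omega_s)$ with $s=2t-t^2$; you instead take a \emph{single} dynamics at $k$ distinct times $t_i=i\delta$ and use that $(\omega_{t_i},\omega_{t_j})$ has the law of $(\omega_0,\omega_{|t_i-t_j|})$. Both reductions are correct; the paper's has the cosmetic advantage that all pairs sit at the same separation, while yours needs Lemma~\ref{lma:overlap_monotonicity} to reduce to the worst (smallest) gap $\delta$, which you do. Second, the paper feeds the quantitative overlap bound~\eqref{eq:overlapcont} directly into the Markov/union-bound step, choosing $t,\vep,k$ explicitly, whereas you invoke only the qualitative $o(|v|)$ conclusion of Theorem~\ref{th:cont} and then diagonalise over $\vep$. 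Either route suffices for the stated theorem; the paper's gives slightly more explicit rates, yours is a bit shorter to write.
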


This work was inspired by previous work of Chatterjee~\cite{chatterjee14a} on disordered Gaussian systems. The main contribution of this paper is two-fold. First we establish a dynamical covariance formula in a general context that relates the covariance of a function to the influence of its respective variables; see Propositions~\ref{prop:cov_finite} and~\ref{prop:cov_infinite}. This part of our work is inspired by recent work of Tassion and Vanneuville~\cite{tasvan23}, who derived a related formula for Boolean functions. Second we specialise to first-passage percolation, and aim to make precise the connection between influences and geodesics that has been alluded to in work of Benjamini, Kalai and Schramm~\cite{benkalsch03}, and which would give~\eqref{eq:fluct_geos} and~\eqref{eq:cov_geos}. We are not able to achieve this in full generality, but obtain partial results in this direction. In a companion paper~\cite{ahldeisfr24} we show that this latter part poses less of a problem in the context of last-passage percolation, and establish a version of~\eqref{eq:cov_geos} for that model.

\subsection{Method of proof}

Below we use the notation $T:=T(0,v)$ and $T_t:=T_t(0,v)$ for brevity. The proofs of Theorems~\ref{th:integer} and~\ref{th:cont} rely on writing the covariance of $T_0$ and $T_t$ in terms of a certain correlation operator. Specifically, we define
$$
Q_t(T):=\E\big[T_0(0,v)T_t(0,v)\big]
$$
and note that
$$
\Cov(T_0,T_t)=Q_t(T)-Q_1(T)=-\int_t^1Q'_s(T)\,ds,
$$
assuming the derivative is well-defined. The derivative of $Q_t(T)$ can be expressed in terms of the contribution from each of the individual variables, which is typically referred to as a measure of the `influence' of the variables. More precisely, we let $\sigma_e^x:[0,\infty)^\Ec\to[0,\infty)^\Ec$ denote the operator that replaces the value of coordinate $e$ by $x$, and set
$$
D_e^xT_t:=T\circ\sigma_e^x(\omega_t)-\int T\circ\sigma_e^y(\omega_t)\,dF(y),
$$
where the integral refers to the average over the integrand with respect to coordinate $e$. We define the {\bf co-influence} of an edge $e$ with respect to $T$ at time $t$ as
$$
\Inf_e(T_0,T_t):=\int\E[D_e^xT_0D_e^xT_t]\,dF(x).
$$
A straightforward coupling argument shows that the co-influences are decreasing in $t$ (see Lemma \ref{lma:inf_inf}), and hence upper bounded by the following $L^2$-notion of influences
\begin{equation}\label{eq:L2inf}
\Inf_e(T):=\Inf_e(T_0,T_0)=\E\big[(T-\E_e[T])^2\big],
\end{equation}
where $\E_e$ refers to expectation taken over the weight at $e$. The derivative of $Q_t(T)$ turns out to equal the edge-sum of the co-influences, resulting in the covariance formula
\begin{equation}\label{eq:varform}
\Cov(T_0,T_t)=\int_t^1\sum_{e\in\Ec}\Inf_e(T_0,T_s)\,ds,
\end{equation}
which for $t=0$ reduces to a formula for the variance. As such, we obtain together with~\eqref{eq:L2inf} an Efron-Stein-like upper bound on the variance.

Formulas of a flavour similar to~\eqref{eq:varform} have appeared in the literature before. A formula in discrete time was derived by Chatterjee in~\cite{chatterjee05}, and used for a similar purpose in~\cite{borlugzhi20}. A related formula in the Gaussian setting, in which the variables update continuously according to independent Ornstein-Uhlenbeck processes, was central in~\cite{chatterjee14a}, and in the context of Boolean functions, a formula of this type was derived in~\cite{tasvan23}.

It has been suggested that the influence of an edge (with respect to $T=T(0,v)$) is roughly proportional to the probability that the edge belongs to the geodesic between $0$ and $v$. In the dynamical setting this corresponds to a statement of the form
\begin{equation}\label{eq:inf_asymp}
\Inf_e(T_0,T_t)\asymp\PP(e\in\pi_0\cap\pi_t),
\end{equation}
where $\pi_t$ is short for $\pi_t(0,v)$. Apart from the derivation of~\eqref{eq:varform}, the main goal of this paper will be to establish a version of this statement, which together with~\eqref{eq:varform} will give a version of~\eqref{eq:fluct_geos} and~\eqref{eq:cov_geos}.
Work of Ahlberg and Hoffman~\cite{ahlhof}, and Dembin, Elboim and Peled~\cite{demelbpel}, on the so-called `midpoint problem', shows that when $F$ is continuous, the influence of edges far from the endpoints tends to zero as $|v|\to\infty$. Our results thus justify the claim that the influence of an edge vanishes as $|v|\to\infty$.

\subsection{Related results}

There has been an increasing interest in the phenomenon of chaos in recent years. The first rigorous evidence of chaos was obtained by Chatterjee~\cite{chatterjee1,chatterjee2,chatterjee14a} for Gaussian disordered systems, including directed Gaussian polymers, the Sherrington-Kirkpatrick model of spin glasses, and maxima of Gaussian fields. In the mathematical study of spin glasses, Chatterjee's work has been followed up in a range of papers, both within and outside of the Gaussian setting; see e.g.~\cite{arghan20,aufche16,chatterjee3,chen13,eldan20}. Chaos has also been observed in the context of eigenvalues of random matrices. Sensitivity of the eigenvector associated with the top eigenvalue of a Wigner matrix established by Bordenave, Lugosi and Zhivotovskiy~\cite{borlugzhi20}, extending earlier work of Chatterjee~\cite{chatterjee14a} outside of the Gaussian setting. Moreover, in~\cite{borlugzhi20} the authors determine the precise order at which the transition from stability to chaos occurs, and in a more precise sense than that we consider here, which for an $n\times n$ Wigner matrix is at $t=\Theta(n^{-1/3})$. 

The literature on random matrices is extensive, and eigenvalues/-vectors thereof have a favourable structure. A significantly greater effort was required by Ganguly and Hammond~\cite{ganham23,ganham24} to establish that the transition from stability to chaos in Brownian last-passage percolation occurs at $t=\Theta(n^{-1/3})$. Their work comprises the first instance of chaos in stochastic models for spatial growth. Their work is again in a Gaussian context, but we emphasise that it is a stronger transition from stability to chaos that is considered in their work, in that both the stability and chaotic regimes concern the expected overlap of geodesics. In a companion paper~\cite{ahldeisfr24} we also extend the weaker transition, which is under discussion in this paper, to other non-Gaussian models of last-passage percolation, in order to emphasise that the transition from stability to chaos should more generally occur at $t=\Theta(\frac1n\Var(T))$, which in the so-called `exactly solvable' setting is equivalent to $t=\Theta(n^{-1/3})$.
In parallel to the preparation of this manuscript, Dembin and Garban~\cite{demgar} have given further evidence for the existence of chaos in the context of max-flow problems associated with first-passage percolation and disordered Ising ferromagnets.

Let us finally mention that a chaotic behaviour has also been observed for the set of pivotal edges in planar percolation models. In the conformally invariant setting, i.e.\ site percolation on the triangular lattice, the pivotal set corresponding to the crossing of an $n\times n$-box exhibits chaos, with the transition occurring at $t=\Theta(n^{-3/4})$. This can be derived as a consequence of the precise analysis of dynamical four-arm events due to Tassion and Vanneuville~\cite{tasvan23}, although parts of this conclusion can also be derived from earlier work~\cite{garpetsch18}; see also~\cite{galpet}.

\subsection{Future work and outline}

In this paper we establish a transition from stability to chaos of the first-passage metric in a weak sense. In~\cite{borlugzhi20,ganham24} a stronger transition is established in that both the sable and chaotic regimes consider the same object (eigenvector and geodesics, respectively). We conjecture that the same stronger transition holds in first-passage percolation.

\begin{conjecture}\label{conj:1}
For $t\ll\frac{1}{|v|}\Var(T(0,v))$ we have $\E\big[\big|\pi_0(0,v)\cap\pi_t(0,v)\big|\big]=\Theta(|v|)$.
\end{conjecture}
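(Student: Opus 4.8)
Since $|\pi_0(0,v)\cap\pi_t(0,v)|\le|\pi_0(0,v)|$ and $\E[|\pi_0(0,v)|]\asymp|v|$ by~\eqref{eq:linear}, the content of Conjecture~\ref{conj:1} is the matching lower bound $\E[|\pi_0\cap\pi_t|]\ge c|v|$ for $t\ll\frac1{|v|}\Var(T)$. It is worth first recording how far the soft relations already carry us. Writing $f(s):=\E[|\pi_0\cap\pi_s|]$, the monotonicity of Lemma~\ref{lma:overlap_monotonicity} together with~\eqref{eq:linear} gives $\int_0^t f(s)\,ds\le t f(0)\le Ct|v|=o(\Var(T))$ when $t\ll\frac1{|v|}\Var(T)$, while~\eqref{eq:fluct_geos} gives $\int_0^1 f(s)\,ds\asymp\Var(T)$; combining these and using monotonicity once more ($f$ non-increasing), $f(t)\ge\int_t^1 f(s)\,ds\gtrsim\Var(T)$. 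Thus the conjecture is automatic whenever $\Var(T)=\Theta(|v|)$, and in the superconcentrated regime it is precisely the factor $|v|/\Var(T)$ that is missing. The plan is to recover this factor through a genuinely geometric rigidity statement for geodesics under a sparse resampling.

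The first route I would pursue is to control the number of edges that \emph{leave} the geodesic, $\E[|\pi_0\setminus\pi_t|]=f(0)-f(t)$, and to show it is at most $(1-\delta)f(0)$ for some fixed $\delta>0$ throughout the stable regime. Fix $e\in\pi_0$. With probability $t$ the edge $e$ is itself resampled, which contributes at most $tf(0)=o(|v|)$ in aggregate and is harmless; otherwise $\omega_t(e)=\omega(e)$, and for $e$ to be dropped the resampled environment must produce a path strictly cheaper than every geodesic through $e$. The minimal such gain — a positive ``detour penalty'' for forbidding $e$ — must be manufactured out of the $\asymp t\cdot(\text{local length})$ edges that were actually resampled. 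Making this competition quantitative, via the known fluctuation bounds for restricted passage times and the near-linearity of geodesics~\eqref{eq:linear}, one would aim to show that the total savings accumulated from the $\asymp t|v|$ resampled edges along $\pi_0$ are, with positive probability, too small to reroute a positive fraction of the path exactly when $t|v|\ll\Var(T)$. Equivalently, one wants the one-sided companion of~\eqref{eq:inf_asymp}, namely $\PP(e\in\pi_0\cap\pi_t)\ge c\,\PP(e\in\pi_0)$ for a positive-density family of edges $e$, since summing over such $e$ gives $f(t)\ge c\,\E[|\pi_0|]\asymp c|v|$ directly.

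A second, more analytic route runs through the covariance formula~\eqref{eq:varform}. Setting $h(s):=\sum_{e}\Inf_e(T_0,T_s)=-Q'_s(T)$, one has $\int_0^1 h(s)\,ds=\Var(T)$, and $h(0)=\sum_e\Inf_e(T)\asymp|v|$ under~\eqref{eq:inf_asymp}. The conjecture would follow from an estimate $|h'(s)|\le C|v|^2/\Var(T)$, that is $|Q''_s(T)|\le C|v|^2/\Var(T)$, since then $h(t)\ge h(0)-Ct|v|^2/\Var(T)=|v|\big(\Theta(1)-O(t|v|/\Var(T))\big)$ remains of order $|v|$ for $t\ll\frac1{|v|}\Var(T)$. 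This is the first-passage analogue of the second-order estimates supplied, in Chatterjee's Gaussian setting, by hypercontractivity of the Ornstein--Uhlenbeck semigroup; here it would have to be replaced by a direct combinatorial bound on the second-order co-influences $\frac{d}{ds}\Inf_e(T_0,T_s)$ with the correct dependence on the variance.

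In either approach the main obstacle is the same, and is exactly the feature that makes first-passage percolation difficult: we lack quantitative control on how geodesics reorganise under a localised perturbation of the weights — which is also why~\eqref{eq:inf_asymp} is not established here in full generality. The rigidity step asks for a matching lower bound to complement the chaos upper bounds~(ii), and the analytic step asks for a second-order influence estimate of the right order; neither is presently within reach. A reasonable intermediate goal would be to prove Conjecture~\ref{conj:1} conditionally on~\eqref{eq:inf_asymp} together with a mild regularity property of $s\mapsto h(s)$, to settle it in settings where the geometry of competing near-geodesics is better understood, or to establish the weaker bound $\E[|\pi_0\cap\pi_t|]\ge|v|^{1-o(1)}$.
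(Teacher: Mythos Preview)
This statement is a \emph{conjecture} in the paper, not a theorem: the paper poses it as open and offers no proof. Your write-up is, accordingly, not a claimed proof but a research discussion, and you say so explicitly (``neither is presently within reach''). There is therefore no paper-proof to compare against; the relevant question is whether your discussion is sound, and it is.

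Your soft argument is correct and isolates the difficulty cleanly: monotonicity of $f(s)=\E[|\pi_0\cap\pi_s|]$ (Lemma~\ref{lma:overlap_monotonicity}) gives $f(t)\ge\int_t^1 f(s)\,ds$, and combining~\eqref{eq:fluct_geos} with $\int_0^t f(s)\,ds\le tf(0)\le Ct|v|=o(\Var(T))$ in the stable regime yields $f(t)\gtrsim\Var(T)$. This already settles the conjecture when $\Var(T)\asymp|v|$, and in the superconcentrated case it is exactly the factor $|v|/\Var(T)$ that is missing. Both routes you sketch toward recovering that factor---a rigidity estimate of the form $\PP(e\in\pi_0\cap\pi_t)\ge c\,\PP(e\in\pi_0)$ for a positive density of edges, and a second-order bound $|Q''_s(T)|\lesssim|v|^2/\Var(T)$ analogous to what hypercontractivity supplies in Chatterjee's Gaussian setting---are reasonable research directions, and your diagnosis of the obstruction (lack of quantitative control on geodesic reorganisation under local perturbations, the same issue that leaves~\eqref{eq:inf_asymp} only partially established here) matches the paper's own framing. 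One small caveat: both routes as you state them presuppose~\eqref{eq:inf_asymp}, which the paper proves only for integer-valued $F$ (Proposition~\ref{prop:int_inf}) and up to an error in the general case, so even the ``conditional'' goals you list at the end carry that caveat.
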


Similarly, we expect that in the chaotic regime the travel times decorrelate. However, in this case we are not aware of results of this kind having been establised for related models.

\begin{conjecture}\label{conj:2}
For $t\gg\frac{1}{|v|}\Var(T(0,v))$ we have $\Corr\big(T_0(0,v),T_t(0,v)\big)=o(1)$.
\end{conjecture}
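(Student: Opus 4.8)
The plan is to work from the covariance formula~\eqref{eq:varform}. Setting $g(s):=\sum_{e\in\Ec}\Inf_e(T_0,T_s)$, which is non-increasing by Lemma~\ref{lma:inf_inf}, that formula reads $\Cov(T_0,T_t)=\int_t^1 g(s)\,ds$ and $\Var(T)=\int_0^1 g(s)\,ds$, so that $\Corr(T_0,T_t)=\big(\int_t^1 g(s)\,ds\big)\big/\big(\int_0^1 g(s)\,ds\big)$, and the task is to show that once $t\gg\frac1{|v|}\Var(T)$ the function $g$ has already shed almost all of its $L^1([0,1])$-mass. It should be stressed at the outset that the coarse information available for $g$ does not suffice: even granting that $g$ is non-increasing with $g(0)=\sum_e\Inf_e(T)\asymp|v|$ and $\int_0^1 g(s)\,ds=\Var(T)$, there are non-increasing profiles obeying these constraints for which $\int_{1/2}^1 g(s)\,ds\asymp\Var(T)$. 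So genuinely finer input is needed, and I expect it to be a sharp description of how $g$, equivalently the geodesic overlap $\E[|\pi_0\cap\pi_s|]$, decays near the transition scale $s\asymp\frac1{|v|}\Var(T)$.

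The natural way to organise this is via the Hoeffding (chaos) decomposition $T=\sum_{S}T_S$, the sum over finite subsets $S$ of $\Ec$ --- legitimate since $T\in L^2$ by~\eqref{eq:pc_moment} --- in which $T_S$ depends only on the weights in $S$ and the summands are pairwise orthogonal. Resampling each edge independently with probability $t$ scales the contribution of $T_S$ by $(1-t)^{|S|}$, so~\eqref{eq:varform} refines to the spectral identities $\Cov(T_0,T_t)=\sum_{S}(1-t)^{|S|}\|T_S\|^2$ and $\Var(T)=\sum_{S}\|T_S\|^2$, together with $\sum_{S}|S|\,\|T_S\|^2=\sum_{e\in\Ec}\Inf_e(T)\asymp|v|$ (the upper bound being elementary, the matching lower bound following from~\eqref{eq:inf_asymp}). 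Writing $\sigma^2:=\Var(T)$, and using $(1-t)^{|S|}\le e^{-t|S|}$ together with $(1-t)^{|S|}\ge1/4$ whenever $|S|\le1/t$ and $t\le1/2$, one checks that Conjecture~\ref{conj:2} is \emph{equivalent} to the spectral-concentration statement that, for every fixed $\delta>0$,
\[
\sum_{S:\;|S|\le\delta\,|v|/\sigma^2}\|T_S\|^2=o(\sigma^2)\qquad\text{as }|v|\to\infty.
\]
In words, $T$ must carry asymptotically negligible spectral weight at degrees a constant factor below $|v|/\sigma^2$; recalling that the mean degree is $\sum_S|S|\|T_S\|^2/\sigma^2\asymp|v|/\sigma^2$, this is exactly the `spectral concentration' phenomenon that underpins every known proof of a sharp transition from stability to chaos. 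The programme is thus: (1) establish the spectral refinement and the reduction --- routine given~\eqref{eq:varform} and the bookkeeping of the Hoeffding decomposition on an infinite product space, in the spirit of Propositions~\ref{prop:cov_finite}--\ref{prop:cov_infinite}; and (2) prove the low-degree concentration.

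The hard part will be step (2). The contribution of any fixed degree ought to be controllable through the resolution of the midpoint problem~\cite{ahlhof,demelbpel}: the influence, and hence the spectral weight, of edges far from the endpoints tends to $0$, which together with $\sum_e\Inf_e(T)\asymp|v|$ and the boundedly many endpoint edges should push the degree-one --- and, with more effort, any fixed-degree --- mass down to $o(|v|)$. The genuine obstruction is the \emph{band of intermediate degrees} up to $\delta|v|/\sigma^2$, which diverges with $|v|$: in the Gaussian and Boolean incarnations of this circle of ideas the corresponding step is driven by hypercontractivity --- respectively by the exactly-solvable structure exploited by Ganguly and Hammond~\cite{ganham24} --- and I expect this to be the principal difficulty here, since no hypercontractive inequality is available for general i.i.d.\ edge weights and the concentration required is in effect a sharp form of chaos of geodesics, of the same strength as the Brownian last-passage transition but phrased for overlaps and integrated over $t$ --- complementary to Conjecture~\ref{conj:1} and, in the chaotic regime, stronger than the $o(|v|)$-statements in Theorems~\ref{th:integer}--\ref{th:cont} (which take effect only for $t\gg1/\log|v|$, possibly well above the transition scale). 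An alternative route would be first to upgrade~\eqref{eq:inf_asymp} to a sharp estimate and then establish the needed decay of $\E[|\pi_0\cap\pi_s|]$ near $s\asymp\sigma^2/|v|$ by direct geometric means; but as such an estimate is stronger than anything currently known, this does not obviously simplify matters, and even formulating the right intermediate inequality seems to demand new ideas --- which is precisely why the statement is recorded here as a conjecture.
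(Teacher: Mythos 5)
The statement you were handed is Conjecture~\ref{conj:2}, which the paper leaves open: there is no proof in the paper, only the remark that ``we are not aware of results of this kind having been established for related models.'' You correctly recognised this and, rather than fabricate a proof, gave an analysis of why it should be hard; that conclusion agrees with the paper.

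Your spectral reduction is on the right track and matches standard intuition in the area. The identity $\Cov(T_0,T_t)=\sum_{S\neq\emptyset}(1-t)^{|S|}\|T_S\|^2$ for the Efron--Stein/Hoeffding decomposition under independent resampling is correct, as is $\sum_e\Inf_e(T)=\sum_S|S|\,\|T_S\|^2$. The forward implication from your low-degree concentration statement to the conjecture is clean (split at $|S|=|v|/\sigma^2$ and use $(1-t)^{|S|}\le e^{-t|S|}$). However, the converse --- that the conjecture implies $\sum_{|S|\le\delta|v|/\sigma^2}\|T_S\|^2=o(\sigma^2)$ for every fixed $\delta$ --- does not follow without controlling the rate of the $o(1)$ in the conjecture: the natural test value $t=1/K$ with $K=\delta|v|/\sigma^2$ lies at scale $\asymp\sigma^2/|v|$, not $\gg\sigma^2/|v|$, and for any $t$ genuinely in the chaotic regime the lower bound $(1-t)^{K}$ degenerates. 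So ``equivalent'' should be weakened to ``implied by.'' This does not affect your main point. Your identification of the intermediate-degree band as the crux, the observation that hypercontractivity is unavailable for general $F$, and the comparison with the mechanisms in~\cite{ganham24} and~\cite{borlugzhi20} are all consistent with the paper's framing of Conjectures~\ref{conj:1}--\ref{conj:2} as future work.
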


The concept of `noise sensitivity' was introduced in the context of Boolean functions by Benjamini, Kalai and Schramm~\cite{benkalsch99}, and refers to the asymptotic independence of the output of a Boolean function with respect to two highly correlated inputs. In the context of first-passage percolation, the analogous notion of noise sensitivity would refer to decorrelation (for all fixed $t>0$) in the sense of Conjecture~\ref{conj:2}, i.e.\
\begin{equation}\label{eq:ns}
\Corr(T_0,T_t)\to0\quad\text{as }|v|\to\infty,
\end{equation}
for all $t>0$. 
Even for fixed $t>0$,~\eqref{eq:ns} is not known in first-passage percolation, nor is it known to hold in other models of spatial growth. From Proposition~\ref{prop:cov_infinite} below it follows (by restricting the integral to $[t,2t]$) that
$$
\sum_{e\in\Ec}\Inf_e(T_0,T_{2t})\le\frac1t\Cov(T_0,T_t),
$$
which shows that~\eqref{eq:ns} would imply that $\sum_{e\in\Ec}\Inf_e(T_0,T_{2t})=o(\Var(T))$. This suggests, in combination with~\eqref{eq:inf_asymp}, that noise sensitivity of $T$ is a stronger expression of a small perturbation than the chaotic behaviour derived here.

In this paper we establish an instance of~\eqref{eq:fluct_geos}, and use this relation to find evidence for chaos in the first-passage metric. It would be interesting to improve upon the bounds on the overlap between geodesics we obtain here, and conversely use~\eqref{eq:fluct_geos} for the central task to improve upon existing variance bounds.

The rest of the paper is organised as follows: We first establish a general covariance formula extending~\eqref{eq:varform} in Sections~\ref{sec:corr} and~\ref{sec:var}. We examine the notion of co-influence in the context of first-passage percolation in Section~\ref{sec:influences}, and prepare for the proofs of Theorems~\ref{th:integer} and~\ref{th:cont}. The proof in the integer-valued setting is simpler, due to the fact that \emph{if} changing the weight of an edge has an effect on the distance between two points, then the magnitude of the effect has to be at least one. Theorem~\ref{th:integer} is proved in Section~\ref{sec:integer} and Theorem~\ref{th:cont} is proved in Section~\ref{sec:cont}. Finally we deduce Theorem~\ref{th:mv} as a corollary in Section~\ref{sec:mv}.

\section{A dynamical covariance formula}\label{sec:corr}

In this section we derive a finitary version of the dynamical variance formula, stated in~\eqref{eq:varform}, that will be central for the remaining analysis. The corresponding formula in infinite volume will be derived in the next section. Since we shall need to consider various functions of the edge weights apart from the first-passage time, and since formulas of this kind have a general interest, we shall develop the theory in a general setting. We begin with some notation.

Let $m\ge1$ be an integer and $F$ (the distribution function of) some probability measure on $\RR$. Let $\omega=(\omega(i))_{i\in[m]}$ and $\omega'=(\omega'(i))_{i\in[m]}$ be two independent random vectors with i.i.d\ coordinates with common distribution $F$. Let $U(1),U(2),\ldots,U(m)$ be independent variables uniformly distributed on the interval $[0,1]$. For $t\in[0,1]$ and $i=1,2,\ldots,m$, set
\begin{equation}\label{eq:omega_t}
\omega_t(i):=\left\{
\begin{aligned}
\omega(i) &&& \text{if }U(i)>t,\\
\omega'(i) &&& \text{if }U(i)\le t.
\end{aligned}
\right.
\end{equation}
We shall write $\PP$ for the associated probability measure, and $\E$ for its expectation.

We will derive a dynamical covariance formula for functions in $L^2(F^m)$, i.e.\ the collection of real-valued functions $f:\RR^m\to\RR$ such that
$$
\E[f(\omega)^2]=\int f^2\,dF^m<\infty.
$$
Let $\sigma_i^x:\RR^m\to\RR^m$ denote the operator that replaces the value of the $i$th coordinate with $x$. Hence, $f\circ\sigma_i^x$ evaluates $f$ with the $i$th coordinate fixed to be $x$. We introduce the following notation for the difference between $f$ and its average over the $i$th coordinate: For $x\in\RR$ let
\begin{align}
\label{eq:Df}
D_i^xf(\omega)&:=f\circ\sigma_i^x(\omega)-\int f\circ\sigma_i^y(\omega)\,dF(y),\\
\label{eq:Df2}
D_if(\omega)&:=f(\omega)-\int f\circ\sigma_i^y(\omega)\,dF(y).
\end{align}
We define the {\bf co-influence} of coordinate $i$ for the function $f$ at time $t$ as
\begin{equation}\label{eq:influences}
\Inf_i\big(f(\omega_0),f(\omega_t)\big):=\int \E\big[D_i^xf(\omega_0)D_i^xf(\omega_t)\big]\,dF(x).
\end{equation}

We remark that the differences in~\eqref{eq:Df}-\eqref{eq:Df2} are not necessarily well-defined for every $\omega\in\RR^m$, since the average over the $i$th coordinate is only guaranteed to exist almost surely. However, we note that
\begin{equation}\label{eq:DfL2}
\E[(D_if(\omega))^2]\le\E[f(\omega)^2]<\infty,
\end{equation}
so that $D_if$ is again in $L^2(F^m)$. It follows (see also Lemma~\ref{lma:influence} below) that $D_i^xf$ is in $L^2(F^m)$ for $F$-almost every $x\in\RR$, and so the co-influences are well-defined for every $t\in[0,1]$. We note further that
$$
\Inf_i\big(f(\omega_0),f(\omega_0)\big)=\E[(D_if)^2],
$$
which recovers the $L^2$-notion of the influence of a coordinate as in~\eqref{eq:L2inf}, in analogy to its appearance in the study of Boolean functions~\cite{garste15,odonnell14}.

The main result of this section is a formula that relates the dynamical covariance of a function $f$ to the (dynamic) co-influences of the respective coordinates.

\begin{prop}\label{prop:cov_finite}
For every $f\in L^2(F^m)$ the covariance $\Cov(f(\omega_0),f(\omega_t))$ is non-negative and non-increasing as a function of $t$ and, for $t\in[0,1]$, it satisfies 
$$
\Cov\big(f(\omega_0),f(\omega_t)\big)=\int_t^1\sum_{i=1}^m\Inf_i\big(f(\omega_0),f(\omega_s)\big)\,ds.
$$
\end{prop}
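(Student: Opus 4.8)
The plan is to differentiate $Q_t(f) := \E[f(\omega_0)f(\omega_t)]$ in $t$ and identify the derivative with $-\sum_{i=1}^m \Inf_i(f(\omega_0),f(\omega_t))$; integrating from $t$ to $1$ and using $Q_1(f) = \E[f(\omega)]\E[f(\omega')] = (\E f)^2$ (since $\omega_0=\omega$ and $\omega_1=\omega'$ are independent) then gives $\Cov(f(\omega_0),f(\omega_t)) = Q_t(f) - (\E f)^2 = -\int_t^1 Q_s'(f)\,ds$, which is exactly the claimed formula. The monotonicity and non-negativity of the covariance will follow once the derivative is shown to equal $-\sum_i \Inf_i(f(\omega_0),f(\omega_s))$, because each co-influence $\Inf_i(f(\omega_0),f(\omega_s)) = \int \E[D_i^x f(\omega_0) D_i^x f(\omega_s)]\,dF(x)$ can be shown to be non-negative: conditionally on all coordinates except $i$, the quantities $D_i^x f(\omega_0)$ and $D_i^x f(\omega_s)$ are, up to the resampling at site $i$, evaluations of the same centered function, and a direct conditioning argument (the value at site $i$ agrees for $\omega_0$ and $\omega_s$ unless $U(i)\le s$, in which case it is resampled) shows that the expectation is a sum of squares of conditional averages. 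Since $Q_t(f) = (\E f)^2 + \int_t^1 \sum_i \Inf_i\,ds$ with a non-negative integrand, it is non-increasing in $t$ and bounded below by $(\E f)^2$, giving both remaining assertions.

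The computational heart is the derivative. First I would reduce to $m=1$ in the following sense: write $Q_t(f) = \E[f(\omega_0)f(\omega_t)]$ and, using the independent uniform clocks, condition on which coordinates have been updated by time $t$, i.e. on the set $S_t = \{i : U(i)\le t\}$. Then $Q_t(f) = \sum_{S\subseteq[m]} t^{|S|}(1-t)^{m-|S|}\,\E\big[f(\omega)\,f(\omega^{S})\big]$, where $\omega^S$ agrees with $\omega'$ on $S$ and with $\omega$ off $S$. Differentiating this polynomial in $t$ and regrouping, the derivative becomes a sum over coordinates $i$ of a difference between the configuration where $i$ is "updated" and the configuration where $i$ is "not updated", weighted by the appropriate clock probabilities for the remaining coordinates. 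The key algebraic step is to recognize that this difference, after integrating over the weight at coordinate $i$ in both the primed and unprimed copies, produces exactly $-\E[D_i^x f(\omega_0) D_i^x f(\omega_t)]$ integrated against $dF(x)$: the subtraction of the average over coordinate $i$ in the definition of $D_i^x$ is precisely what appears when one passes from "coordinate $i$ updated (so $\omega(i)$ and $\omega'(i)$ independent)" to "the cross term", since averaging an independent fresh copy of coordinate $i$ is the same as integrating $dF$.

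I expect the main obstacle to be the integrability and measurability bookkeeping rather than the combinatorial identity. Specifically, $D_i^x f(\omega)$ need not be defined for every $\omega$ and every $x$ — only for $F$-almost every $x$, as noted after~\eqref{eq:DfL2} — so one must be careful that all the conditional expectations, the differentiation under the expectation (or the termwise differentiation of the finite polynomial expansion, which is cleaner), and Fubini-type interchanges are justified purely from $f\in L^2(F^m)$. Using the finite expansion $Q_t(f) = \sum_S t^{|S|}(1-t)^{m-|S|}\E[f(\omega)f(\omega^S)]$ sidesteps the differentiation-under-the-integral issue entirely, since it is a genuine polynomial in $t$ with finitely many terms, each a finite expectation bounded by $\E[f(\omega)^2]$ via Cauchy--Schwarz. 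The remaining care is then to check, via Lemma~\ref{lma:influence}, that $D_i^x f \in L^2(F^m)$ for $F$-a.e.\ $x$ so that the co-influences on the right-hand side are well-defined and finite, and that the regrouped derivative matches $\int \E[D_i^x f(\omega_0) D_i^x f(\omega_t)]\,dF(x)$ up to sign; this is a routine but slightly tedious expansion of the centered quantities $D_i^x$ into $f\circ\sigma_i^x$ minus its $i$-average.
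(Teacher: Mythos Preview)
Your approach via the subset expansion $Q_t(f)=\sum_{S\subseteq[m]} t^{|S|}(1-t)^{m-|S|}\E[f(\omega)f(\omega^S)]$ is correct and gives a clean route to the formula: $Q_t$ is a genuine polynomial in $t$, its derivative regroups exactly as you describe into $-\sum_i \Inf_i(f(\omega_0),f(\omega_t))$, and every expectation is controlled by Cauchy--Schwarz. This differs from the paper, which introduces the multi-parameter configuration $Y(t_1,\ldots,t_m)$, computes each partial derivative $\partial/\partial t_i$ by a direct increment argument (Lemma~\ref{lma:derivative}), and then applies the chain rule; the paper also proves the monotonicity and non-negativity of $Q_t$ \emph{separately}, via a coupling (Lemma~\ref{lma:monotonicity}), rather than deducing them from the formula. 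Your route is more elementary in finite volume; the paper's is more modular, and its coupling lemma (in the multi-parameter form, Lemma~\ref{lma:mono_inf}) is reused in the infinite-volume extension, where your subset expansion is no longer available.

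One step in your sketch is genuinely confused, however. Your argument for $\Inf_i(f(\omega_0),f(\omega_s))\ge0$ conditions ``on all coordinates except $i$'' and speaks of ``the value at site $i$'' being resampled, but $D_i^x f$ does not depend on coordinate $i$ at all --- the operator $\sigma_i^x$ has already fixed that coordinate to $x$. Conditioning on the remaining coordinates makes both factors deterministic and says nothing about the sign. What you actually need is $Q_s(D_i^x f)\ge0$, and the right conditioning is on the set $\{j\neq i:U(j)>s\}$ together with the values $\omega(j)$ there: given these, $D_i^x f(\omega_0)$ and $D_i^x f(\omega_s)$ are conditionally i.i.d., so the expectation is $\E\big[\E[D_i^x f\mid\text{non-resampled}]^2\big]\ge0$. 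Equivalently, in your own expansion each term $\E[g(\omega)g(\omega^S)]=\E\big[\E[g\mid\omega_{S^c}]^2\big]\ge0$. With this correction your argument goes through.
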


We shall prove the proposition in two intermediary steps, using coupling techniques similar to those in~\cite{tasvan23}. To guide the proof, we introduce the notation
$$
Q_t(f):=\E[f(\omega_0)f(\omega_t)].
$$
In this notation, we have
\begin{equation}\label{eq:cov_int_diff}
\Cov\big(f(\omega_0),f(\omega_t)\big)=Q_t(f)-Q_1(f)=\int_t^1-Q'_s\,ds,
\end{equation}
under the assumption that $Q_t(f)$ is continuously differentiable (which we verify below).

As a first step we prove the following lemma, using a standard coupling argument.

\begin{lemma}\label{lma:monotonicity}
For every $f\in L^2(F^m)$ and $0\le s\le t\le1$ we have
$$
Q_s(f)\ge Q_t(f)\ge0.
$$
\end{lemma}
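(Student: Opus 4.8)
The plan is to exhibit, for any $0\le s\le t\le1$, a coupling of the pair $(\omega_0,\omega_s)$ with the pair $(\omega_0,\omega_t)$ in such a way that the two copies of $\omega_0$ agree, while the copy of $\omega_s$ is a ``less refreshed'' version of the copy of $\omega_t$. Concretely, start from the variables $\{\omega(i)\}$, $\{\omega'(i)\}$, $\{U(i)\}$ used to define $\omega_t$ in~\eqref{eq:omega_t}, and introduce an extra independent i.i.d.\ family $\{\omega''(i)\}$ with law $F$. Define a third configuration $\tilde\omega_s$ by resampling coordinate $i$ using $\omega''(i)$ precisely when $U(i)\le s$, keeping $\omega_t(i)$ otherwise; equivalently,
$$
\tilde\omega_s(i):=\begin{cases}\omega''(i)&\text{if }U(i)\le s,\\ \omega_t(i)&\text{if }U(i)>s.\end{cases}
$$
Then $(\omega_0,\tilde\omega_s)$ has the same joint law as $(\omega_0,\omega_s)$: each coordinate of $\tilde\omega_s$ equals $\omega(i)$ exactly on $\{U(i)>t\}\cup(\{s<U(i)\le t\})=\{U(i)>s\}$ and is an independent $F$-variable (namely $\omega''(i)$ or $\omega'(i)$) on $\{U(i)\le s\}$, which is exactly the mechanism in~\eqref{eq:omega_t} at time $s$. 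Crucially, conditionally on $\omega_0=\omega_t$ (i.e.\ on the event that $\omega$ and $\omega'$ happen to agree on the refreshed coordinates, or more simply after conditioning on $\omega_0$ itself), $\omega_t$ and $\tilde\omega_s$ are conditionally i.i.d.\ resamplings of a common base configuration — this is the standard fact that the dynamics is reversible/a Markov semigroup, so $(\omega_0,\omega_t)$ and $(\omega_0,\tilde\omega_s)$ can be realized with $\omega_t$ and $\tilde\omega_s$ conditionally independent given $\omega_0$ when $s\le t$… the precise statement I want is a conditional-expectation (tower) identity rather than independence.

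So the cleaner route is: condition on $\omega_0$. Write $P_r$ for the Markov semigroup acting on coordinates (each coordinate independently refreshed with probability $r$). Then $\E[f(\omega_0)f(\omega_t)]=\E\big[f(\omega_0)\,(P_t f)(\omega_0)\big]$ and $Q_s(f)-Q_t(f)=\E\big[f(\omega_0)\,(P_s f - P_t f)(\omega_0)\big]$. Since $P_t=P_{t-s}'\circ$ (the semigroup at time $s$ followed by additional refreshing) — more precisely one checks $P_t = P_{r}P_{s}$ with an appropriate $r$, using the product structure over coordinates and the elementary identity for a single coordinate that refreshing with probability $s$ and then with probability $r$ refreshes with probability $s+r-sr$ — we get $P_t f = P_r(P_s f)$. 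Hence $Q_s(f)-Q_t(f)=\E\big[(P_s f)(\omega_0)\,\big((I-P_r)(P_s f)\big)(\omega_0)\big] = \langle g,(I-P_r)g\rangle_{L^2(F^m)}$ with $g:=P_s f$. Because $P_r$ is a self-adjoint contraction on $L^2(F^m)$ (it is a product of self-adjoint averaging operators, each with operator norm $1$), $I-P_r$ is a nonnegative operator, giving $Q_s(f)\ge Q_t(f)$. Taking $s=0$, $t$ arbitrary (so $g=f$) and using that $I-P_t$ is also nonnegative yields $Q_t(f)\ge \E[f]^2\ge 0$, or directly $Q_t(f)=\langle f,P_tf\rangle\ge 0$ since $P_t$ is nonnegative; either way $Q_t(f)\ge 0$.

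The main obstacle is purely the $L^2$ bookkeeping: $D_i^x f$ and the averaged objects are only defined $F$-a.e., so before manipulating $P_t f$ one must check that $f\in L^2(F^m)$ guarantees $P_s f\in L^2(F^m)$ (immediate from Jensen, as already recorded in~\eqref{eq:DfL2}) and that the spectral/self-adjointness argument for $P_r$ is legitimate on $L^2(F^m)$ — this is standard since $P_r$ is a finite product of conditional-expectation operators $\E[\cdot\mid\omega(j),j\ne i]$ composed with mixing, each self-adjoint and of norm one. No regularity of $F$ is needed. I expect the coupling/semigroup identity $P_t=P_rP_s$ and the self-adjointness of $P_r$ to be the only points requiring care; everything else is a one-line consequence.
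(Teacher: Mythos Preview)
Your semigroup approach is sound and is in fact the operator-theoretic translation of the paper's coupling: the paper's choice $p=1-\sqrt{1-s}$ and $q=1-\sqrt{1-t}$ is exactly the ``square root'' of the refresh operator, so that $Q_s(f)=\E\big[(P_pf)(V')^2\big]=\|P_pf\|_2^2$ and $Q_t(f)=\|P_qf\|_2^2$, and the paper's conditioning on $V$ versus $V'$ followed by Jensen is precisely the contraction inequality $\|P_qf\|_2\le\|P_pf\|_2$ for $p\le q$. So the two proofs are the same idea in different clothing; your version is arguably cleaner once the operator framework is set up.

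There is, however, an algebraic slip you should fix. With $g:=P_sf$ you have
\[
Q_s(f)-Q_t(f)=\langle f,\,P_sf\rangle-\langle f,\,P_rP_sf\rangle=\langle f,\,(I-P_r)g\rangle,
\]
not $\langle g,(I-P_r)g\rangle$; the latter would equal $\langle f,P_s^2(I-P_r)f\rangle$, which is a different quantity. The conclusion still follows, by either of two routes: (a) observe that $P_s$ and $I-P_r$ are commuting nonnegative self-adjoint operators on $L^2(F^m)$ (each coordinate operator is $(1-\cdot)I+(\cdot)E_i$ with spectrum in $[0,1]$), so their product $P_s(I-P_r)$ is again nonnegative and $\langle f,P_s(I-P_r)f\rangle\ge0$; or (b) take the square root: set $h:=P_{s'}f$ with $s'=1-\sqrt{1-s}$, so that $P_{s'}^2=P_s$ and hence $Q_s(f)=\|h\|_2^2$, $Q_t(f)=\langle h,P_rh\rangle$, giving $Q_s(f)-Q_t(f)=\langle h,(I-P_r)h\rangle\ge0$. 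Route (b) is exactly the paper's argument. Your first paragraph's coupling attempt is muddled (the conditional independence claim is not what you want) and you were right to abandon it.
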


\begin{proof}
Let $0\le s\le t\le1$ be fixed, and let $V$ be a random vector with i.i.d.\ coordinates distributed as $F$. We shall construct new random vectors $X$, $Y$ and $Z$ from $V$ through a resampling procedure. Since the resampling procedure acts independently on each coordinate, the resulting vectors are again i.i.d.\ with marginal distribution $F$.

Set
$$
p=1-\sqrt{1-s},\; q=1-\sqrt{1-t}\text{ and }r=\frac{q-p}{1-p}.
$$
Since by assumption $0\le s\le t\le 1$, it follows that $0\le p\le q\le 1$, and hence that $0\le r\le1$.

Let $W,W',W'',W'''$ be independent copies of $V$. First, let $V'$ be obtained from $V$ by independently replacing each coordinate of $V$ with the corresponding coordinate from $W$ with probability $r$. Next, we obtain $X$ from $V'$ by independently replacing each coordinate of $V'$ with the corresponding coordinate from $W'$ with probability $p$, and similarly obtain $Y$ from $V'$ by independently replacing each coordinate of $V'$ with the corresponding coordinate from $W''$ with probability $p$. Finally, we obtain $Z$ from $V$ by independently replacing each coordinate with the corresponding coordinate of $W'''$ with probability $q$.

A straightforward calculation shows that the joint distribution of the pair $(X,Y)$ is equal to that of $(\omega_0,\omega_s)$, and that the joint distribution of $(X,Z)$ is equal to that of $(\omega_0,\omega_t)$. Moreover, $X$ and $Y$ are conditionally independent given $V'$, and $X$ and $Z$ are conditionally independent given $V$. Therefore, it follows that
\begin{align*}
Q_s(f)&=\E[f(X)f(Y)]=\E\big[\E[f(X)f(Y)|V']\big]=\E\big[\E[f(X)|V']^2\big]\ge0,\\
Q_t(f)&=\E[f(X)f(Z)]=\E\big[\E[f(X)f(Z)|V]\big]=\E\big[\E[f(X)|V]^2\big]\ge0.
\end{align*}

Let $\mathcal{F}$ denote the $\sigma$-algebra of information generated by $V$, $W$ and the randomness used to obtain $V'$ from $V$. Both $V$ and $V'$ are measurable with respect to $\mathcal{F}$, but $\mathcal{F}$ contains no more information regarding $X$. Hence, it follows from Jensen's inequality that
$$
Q_t(f)=\E\big[\E[f(X)|V]^2\big]=\E\big[\E[\,\E[f(X)|\mathcal{F}]\,|V]^2\big]\le\E\big[\E[f(X)|\mathcal{F}]^2\big]=Q_s(f),
$$
as required.
\end{proof}

From the previous lemma we obtain an analogous statement for the co-influences.

\begin{lemma}\label{lma:influence}
For every $f\in L^2(F^m)$ and $i=1,2,\ldots,m$ we have $D_i^xf\in L^2(F^m)$ for $F$-almost every $x\in\RR$. Moreover, for $0\le s\le t\le1$ the co-influences are finite and
$$
\Inf_i(f(\omega_0),f(\omega_s))\ge\Inf_i(f(\omega_0),f(\omega_t))\ge0.
$$
\end{lemma}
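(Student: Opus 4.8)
The plan is to reduce both claims to Lemma~\ref{lma:monotonicity}, applied to an auxiliary function of the $m-1$ coordinates other than $i$. The key observation is that $D_i^xf$ depends on $\omega$ only through $(\omega(j))_{j\neq i}$, since both $f\circ\sigma_i^x$ and the average $\int f\circ\sigma_i^y(\omega)\,dF(y)$ ignore coordinate $i$. Denote by $g_x$ the induced function on $\RR^{[m]\setminus\{i\}}$, so that $g_x\big((\omega(j))_{j\neq i}\big)=D_i^xf(\omega)$, and note the identity $D_i^xf=(D_if)\circ\sigma_i^x$, which follows from $\sigma_i^y\circ\sigma_i^x=\sigma_i^y$.

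For the $L^2$-claim I would use that $\sigma_i^x(\omega)$ is distributed as $F^m$ when $\omega\sim F^m$ and $x\sim F$ independently, so by Tonelli's theorem
$$
\int\E\big[(D_i^xf(\omega))^2\big]\,dF(x)=\int\E\big[\big((D_if)\circ\sigma_i^x(\omega)\big)^2\big]\,dF(x)=\E\big[(D_if(\omega))^2\big]\le\E\big[f(\omega)^2\big]<\infty,
$$
the last inequality being~\eqref{eq:DfL2}. Hence $\E[(D_i^xf(\omega))^2]<\infty$ for $F$-a.e.\ $x$, i.e.\ $g_x\in L^2(F^{m-1})$ and $D_i^xf\in L^2(F^m)$ for $F$-a.e.\ $x$. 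Finiteness of the co-influences then follows from Cauchy--Schwarz: since $\omega_0$ and $\omega_t$ are each distributed as $F^m$, $|\E[D_i^xf(\omega_0)D_i^xf(\omega_t)]|\le\E[(D_i^xf(\omega))^2]$ for $F$-a.e.\ $x$, and integrating over $x$ bounds $|\Inf_i(f(\omega_0),f(\omega_t))|$ by $\E[f(\omega)^2]<\infty$; in particular the defining integral is absolutely convergent.

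For the monotonicity, fix $0\le s\le t\le1$ and an $x$ with $g_x\in L^2(F^{m-1})$. The pair $\big((\omega_0(j))_{j\neq i},(\omega_s(j))_{j\neq i}\big)$ is precisely the dynamical pair~\eqref{eq:omega_t} in dimension $m-1$, driven by the clocks $\{U(j)\}_{j\neq i}$, and coordinate $i$ does not enter $g_x$; so Lemma~\ref{lma:monotonicity} applied to $g_x$ gives
$$
\E\big[D_i^xf(\omega_0)D_i^xf(\omega_s)\big]\ge\E\big[D_i^xf(\omega_0)D_i^xf(\omega_t)\big]\ge0
$$
for $F$-a.e.\ $x$. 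Both sides are integrable in $x$ by the previous paragraph, so integrating against $dF(x)$ yields $\Inf_i(f(\omega_0),f(\omega_s))\ge\Inf_i(f(\omega_0),f(\omega_t))\ge0$.

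There is no substantial obstacle here: the content is the identity $D_i^xf=(D_if)\circ\sigma_i^x$ together with the observation that $D_i^xf$ is a function of the coordinates other than $i$, which turn Lemma~\ref{lma:monotonicity} into both the $L^2$-statement and the monotonicity of co-influences. The only point requiring care is the measure-theoretic bookkeeping — that $D_i^xf$ is defined for $F$-a.e.\ $x$ and $F^m$-a.e.\ $\omega$, that Tonelli applies to the nonnegative integrand, and that inequalities holding for $F$-a.e.\ $x$ may be integrated against $dF(x)$.
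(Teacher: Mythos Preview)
Your proof is correct and follows essentially the same route as the paper: both reduce to Lemma~\ref{lma:monotonicity} applied to $D_i^xf$, after using~\eqref{eq:DfL2} to get $D_i^xf\in L^2$ for $F$-a.e.\ $x$. The only cosmetic difference is that you explicitly pass to the function $g_x$ on $\RR^{[m]\setminus\{i\}}$ and invoke Lemma~\ref{lma:monotonicity} in dimension $m-1$, whereas the paper simply writes $\Inf_i(f(\omega_0),f(\omega_t))=\int Q_t(D_i^xf)\,dF(x)$ and applies Lemma~\ref{lma:monotonicity} directly to $D_i^xf$ viewed as a function on $\RR^m$ (which is equivalent, since the extra coordinate is ignored).
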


\begin{proof}
Using Cauchy-Schwartz' inequality and~\eqref{eq:DfL2} we obtain that
$$
\Inf_i(f(\omega_0),f(\omega_t))\le\int\E\big[(D_i^xf(\omega))^2\big]\,dF(x)=\E\big[(D_if(\omega))^2\big]<\infty.
$$
In particular, the influences are well-defined and finite, and $D_i^xf$ is in $L^2(F^m)$ for $F$-almost every $x\in\RR$. Moreover,
$$
\Inf_i(f(\omega_0),f(\omega_t))=\int\E\big[D_i^xf(\omega_0)D_i^xf(\omega_t)\big]\,dF(x)=\int Q_t(D_i^xf)\,dF(x).
$$
By Lemma~\ref{lma:monotonicity} we have for $F$-almost every $x\in\RR$ that $Q_t(D_i^xf)$ is non-negative and non-decreasing in $t$. These properties are preserved under integration (over $x$), so the proof is complete.
\end{proof}

We next move to relate the derivative of $Q_t(f)$ with the co-influences at time $t$. For proof-technical reasons we shall extend the construction so that a different time variable is associated with each coordinate. Let $X=(X_1,\ldots, X_m)$ and $Z=(Z_1,\ldots ,Z_m)$ be two independent $\RR^m$-valued random vectors with i.i.d.\ components distributed as $F$, and let $U_1,\ldots, U_m$ be independent uniform random variables on $[0,1]$. Given $t_1, \dots, t_m \in [0,1]$, let $Y(t_1, \dots, t_m)$ be the $m$-dimensional random vector, where the $i$th coordinate is given by
\begin{equation}\label{eq:XYZ}
Y_i(t_i) = \begin{cases}
 X_i & \text{ if } U_i > t_i, \\
 Z_i & \text{ if } U_i \le t_i.
\end{cases}
\end{equation}
The vector $Y(t_1, \dots, t_m)$ is hence obtained from $X$ by resampling the $i$th coordinate with probability $t_i$, then replacing it with the corresponding entry from $Z$. When all $t_i$ are equal, we write $Y_t$ for $Y(t,\ldots, t)$. In particular, $(Y_0,Y_t)$ and $(\omega_0,\omega_t)$ are equal in distribution and, for $f$ in $L^2(F^m)$, we have
$$
Q_t(f)=\E[f(Y_0)f(Y_t)].
$$

\begin{lemma}\label{lma:derivative}
For every $f\in L^2(F^m)$ and $i=1,2,\ldots,m$ we have
$$
- \frac{\partial}{\partial t_i} \mathbb{E} \big[f(X)f(Y(t_1, \dots, t_m))\big] = \int\mathbb{E} \big[D_i^xf(X) D_i^x (Y(t_1, \dots, t_m))\big] \, dF(x),
$$
and the right-hand side is continuous in each of its coordinates.
\end{lemma}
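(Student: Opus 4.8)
The plan is to isolate the contribution of coordinate $i$, using that $Y(t_1,\dots,t_m)$ depends on $t_i$ only through whether the clock $U_i$ has rung, i.e.\ through the indicator $\{U_i\le t_i\}$. Before computing the derivative I would dispose of the smoothness and continuity claims by observing that $Q(t_1,\dots,t_m):=\E[f(X)f(Y(t_1,\dots,t_m))]$ is a multilinear polynomial in $(t_1,\dots,t_m)$. Indeed, conditioning on the random set $S=\{j\in[m]:U_j\le t_j\}$ of resampled coordinates, whose law is $\PP(S=A)=\prod_{j\in A}t_j\prod_{j\notin A}(1-t_j)$, on the event $\{S=A\}$ the vector $Y$ equals the vector $X^A$ obtained from $X$ by overwriting the coordinates in $A$ with the corresponding entries of $Z$. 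Each $X^A$ has i.i.d.\ coordinates with law $F$, hence $f(X^A)\in L^2(F^m)$ and $\E[f(X)f(X^A)]$ is finite by Cauchy--Schwarz. Thus $Q$ is a finite sum of monomials with finite coefficients; in particular it is $C^\infty$, and $\partial Q/\partial t_i$ is a polynomial, hence continuous in all of its coordinates. This settles the continuity assertion once the displayed identity is proved.

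To compute the derivative, let $\mathcal G$ denote the $\sigma$-algebra generated by all of the randomness other than $U_i$, so that $X$, $Z$ and $(U_j)_{j\ne i}$ — and hence the vector $Y_{-i}$ — are $\mathcal G$-measurable, while $U_i$ is independent of $\mathcal G$ and uniform on $[0,1]$. Then
\[
\E\big[f(X)f(Y)\,\big|\,\mathcal G\big]=f(X)\Big[(1-t_i)\,f\circ\sigma_i^{X_i}(Y)+t_i\,f\circ\sigma_i^{Z_i}(Y)\Big],
\]
which is affine in $t_i$; the two terms on the right are integrable because $\sigma_i^{X_i}(Y)$ and $\sigma_i^{Z_i}(Y)$ are each distributed as $F^m$ (using that $X_i$, resp.\ $Z_i$, is independent of $Y_{-i}$). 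Taking expectations and differentiating the resulting affine function of $t_i$ yields
\[
-\frac{\partial Q}{\partial t_i}=\E\Big[f(X)\big(f\circ\sigma_i^{X_i}(Y)-f\circ\sigma_i^{Z_i}(Y)\big)\Big].
\]

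Next I would rewrite each term via independence. Since $f\circ\sigma_i^a(X)$ depends only on $X_{-i}$ and $a$, and $f\circ\sigma_i^b(Y)$ only on $Y_{-i}$ and $b$, while $X_i$ and $Z_i$ are independent of the pair $(X_{-i},Y_{-i})$, Fubini's theorem gives $\E[f(X)f\circ\sigma_i^{X_i}(Y)]=\int\E[f\circ\sigma_i^a(X)f\circ\sigma_i^a(Y)]\,dF(a)$ and $\E[f(X)f\circ\sigma_i^{Z_i}(Y)]=\int\E[f\circ\sigma_i^a(X)\textstyle\int f\circ\sigma_i^b(Y)\,dF(b)]\,dF(a)$. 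Subtracting these identifies $-\partial Q/\partial t_i$ with $\int\E[f\circ\sigma_i^a(X)\,D_i^af(Y)]\,dF(a)$. Finally I would add zero in the form
\[
\int\E\Big[\big(\textstyle\int f\circ\sigma_i^y(X)\,dF(y)\big)D_i^af(Y)\Big]\,dF(a)=\E\Big[\big(\textstyle\int f\circ\sigma_i^y(X)\,dF(y)\big)\textstyle\int D_i^af(Y)\,dF(a)\Big]=0,
\]
the inner integral vanishing by the very definition of $D_i^af$; this converts $f\circ\sigma_i^a(X)$ into $D_i^af(X)$ and produces the claimed identity.

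The genuinely delicate part — and the step I expect to require the most care — is the measure-theoretic bookkeeping rather than any single clever idea: checking that $D_i^af(X)$, $D_i^af(Y)$ and the related quantities $f\circ\sigma_i^a(\cdot)$ and $\int f\circ\sigma_i^y(\cdot)\,dF(y)$ are well-defined elements of $L^2$ for $F$-almost every $a$ — which is exactly Lemma~\ref{lma:influence} applied to the $F^m$-distributed vectors $X$ and $\sigma_i^{X_i}(Y)$, $\sigma_i^{Z_i}(Y)$ — and justifying the repeated interchanges of $\E$ with the integrals $\int\cdot\,dF(a)$ and $\int\cdot\,dF(b)$. Each such interchange follows from Cauchy--Schwarz together with the uniform bound $\int\E[(D_i^af(\cdot))^2]\,dF(a)=\E[(D_if(\cdot))^2]\le\E[f^2]<\infty$ from~\eqref{eq:DfL2}, but these estimates should be spelled out.
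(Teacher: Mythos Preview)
Your proof is correct and follows essentially the same route as the paper: both isolate coordinate $i$, compute the derivative as $\E\big[f(X)\big(f\circ\sigma_i^{X_i}(Y)-f\circ\sigma_i^{Z_i}(Y)\big)\big]$ (the paper via a difference quotient, you via the affine structure after conditioning on $\mathcal G$), and then center each factor to produce the $D_i^x$ form. Your observation that $Q$ is a multilinear polynomial in $(t_1,\dots,t_m)$ dispatches the continuity claim more cleanly than the paper's ad hoc argument, but otherwise the two proofs coincide.
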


\begin{proof}
The following argument is identical for all $i\in[m]$ so we consider only $i=1$. We then want to compute the derivative of the function
$$
\phi(t_1):= \mathbb{E}\big[f(X)f(Y(t_1, t_2, \dots, t_m))\big].
$$
To this end, fix $t_1\in(0,1)$ and let $\delta>0$ be such that $t_1+\delta\in[0,1]$. Write $Y=Y(t_1, \dots, t_m)$ and $Y' = Y(t_1 + \delta, t_2, \dots, t_m)$, and observe that
\begin{equation*}
\phi(t_1+\delta) - \phi(t_1) = \mathbb{E}\big[f(X)\big(f(Y')-f(Y)\big)\big].
\end{equation*}
Note that the random variables $Y$ and $Y'$ can differ only in their first coordinate. They do so when $U_1 \in (t_1, t_1+\delta]$, in which case $Y_1 = X_1$ and $Y_1' = Z_1$. Write $Y^X=(X_1,Y_2(t_2),\ldots,Y_m(t_m))$ and $Y^Z=(Z_1,Y_2(t_2),\ldots,Y_m(t_m))$. Since $U_1$ takes values in $(t_1,t_1+\delta]$ with probability $\delta$, we have
\begin{equation}\label{eq:delta_prob}
\phi(t_1+\delta) - \phi(t_1) = \delta\E\big[f(X)\big(f(Y^Z)-f(Y^X)\big)\big].
\end{equation}
Since $Y^Z$ and $Y^X$ differ only in the first coordinate, and since $f(X)-D_1f(X)$ is determined by $(X_2,\ldots,X_m)$, it follows (by averaging over $X_1$ and $Z_1$ first) that
$$
\phi(t_1+\delta)-\phi(t_1)=\delta\E\big[D_1f(X)\big(f(Y^Z)-f(Y^X)\big)\big].
$$
Moreover, $f(Y^Z)-f(Y^X)=D_1f(Y^Z)-D_1f(Y^X)$, and since $\E[D_1f(X)|(X_2,\ldots,X_m)]$ is zero, we conclude that
$$
\phi(t_1+\delta)-\phi(t_1)=\delta\E\big[D_1f(X)\big(D_1f(Y^Z)-D_1f(Y^X)\big)\big]=-\delta\E\big[D_1f(X)D_1f(Y^X)\big].
$$

An analogous computation yields the same conclusion for $\delta<0$, but with the opposite sign. (For $\delta<0$, the variable $U_1$ takes values in $(t_1+\delta,t_1]$ with probability $-\delta$, which gives an additional `$-$' in the right-hand side of~\eqref{eq:delta_prob}.) Hence
$$
- \phi'(t_1) = \mathbb{E} \big[D_1f(X) D_1f(Y^X)\big] = \int \mathbb{E} [D_1^x f(X) D_1^x f(Y(t_1, \dots, t_n))] \, dF(x),
$$
as required. Finally, we note that $\mathbb{E} \big[D_1f(X) D_1f(Y^X)\big]$ is constant in $t_1$, and that an argument analogous to that leading to~\eqref{eq:delta_prob} shows that it is continuous in the remaining coordinates $t_2,\ldots,t_m$.
\end{proof}

We are finally set to prove the main result of this section.

\begin{proof}[Proof of Proposition~\ref{prop:cov_finite}]
First, recall that $\Cov(f(\omega_0),f(\omega_t))=Q_t(f)-Q_1(f)$, so that non-negativity and (weak) monotonicity of the covariance follows from the monotonicity of $Q_t(f)$, which was proved in Lemma~\ref{lma:monotonicity}.

Second, Lemma~\ref{lma:derivative} and the chain rule show that $Q_t(f)$ is continuously differentiable on $(0,1)$ with
\begin{equation}\label{eq:derivative}
-Q'_t(f)=\sum_{i=1}^m\int\mathbb{E} \big[D_i^xf(Y_0) D_i^x (Y_t)\big] \, dF(x)= \sum_{i=1}^m\textup{Inf}_i\big(f(\omega_0),f(\omega_t)\big).
\end{equation}
Hence, by the fundamental theorem of calculus,~\eqref{eq:cov_int_diff} is justified, which together with~\eqref{eq:derivative} completes the proof.
\end{proof}

\section{A dynamical covariance formula in infinite volume}\label{sec:var}

In the previous section we derived a dynamical covariance formula for real-valued functions on $\RR^m$. In this section we extend the covariance formula to a formula for functions of infinitely many variables, and hence establish~\eqref{eq:varform}. We shall again work with general functions, as we shall need to apply parts of the theory to functions other than the distance function. Since variables associated with different edges of the lattice are independent, and since there is a bijection between $\Z^d$ and the natural numbers, and between the set of nearest-neighbour edges of $\Z^d$ and the natural numbers, it will suffice to establish a formula for functions $f:\RR^\N\to\RR$ that satisfy
$$
\E[f(\omega)^2]=\int f^2\,dF^\N<\infty,
$$
where again $F$ denotes some probability distribution on $\RR$.
We shall denote this class of functions by $L^2(F^\N)$, and use the convention that $\N=\{1,2,\ldots\}$.

Definitions of central concepts introduced in the previous section extend straightforwardly to the infinite setting. Let $\omega$ and $\omega'$ be independent $\RR^\N$-valued random vectors with i.i.d.\ coordinates distributed as $F$. Let $U(1),U(2),\ldots$ be independent uniform random variables on $[0,1]$, and let $\omega_t$ be the $\RR^\N$-valued random vector defined as in~\eqref{eq:omega_t}. Note that the definitions of differences and co-influences in~\eqref{eq:Df}-\eqref{eq:influences} now extend straightforwardly to function in $L^2(F^\N)$.

The main result of this section is the following dynamic covariance formula.

\begin{prop}\label{prop:cov_infinite}
For every $f\in L^2(F^\N)$ the covariance $\Cov(f(\omega_0),f(\omega_t))$ is non-negative and non-increasing as a function of $t$ and, for $t\in[0,1]$, it satisfies 
$$
\Cov\big(f(\omega_0),f(\omega_t)\big)=\int_t^1\sum_{i=1}^\infty\Inf_i\big(f(\omega_0),f(\omega_s)\big)\,ds.
$$
\end{prop}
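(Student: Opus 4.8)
The plan is to deduce Proposition~\ref{prop:cov_infinite} from its finite-volume counterpart, Proposition~\ref{prop:cov_finite}, by a truncation-and-limiting argument. Given $f\in L^2(F^\N)$, I would first introduce the conditional expectations $f_m:=\E[f\mid \omega(1),\ldots,\omega(m)]$, which I regard as elements of $L^2(F^m)$ (and also, trivially, of $L^2(F^\N)$ by ignoring the remaining coordinates). By the martingale convergence theorem $f_m\to f$ in $L^2(F^\N)$ as $m\to\infty$. Since the map $g\mapsto\Cov(g(\omega_0),g(\omega_t))$ is continuous in the $L^2$-norm (it is a bounded bilinear form in $g$, being dominated by $\E[g(\omega_0)^2]^{1/2}\E[g(\omega_t)^2]^{1/2}=\E[g(\omega)^2]$ via Cauchy--Schwarz, using that $\omega_0$ and $\omega_t$ are each distributed as $\omega$), it follows that $\Cov(f_m(\omega_0),f_m(\omega_t))\to\Cov(f(\omega_0),f(\omega_t))$ for every fixed $t$. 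The non-negativity and monotonicity in $t$ of the left-hand side are then inherited from the finite case, so only the integral identity requires work.

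For each fixed $m$, Proposition~\ref{prop:cov_finite} applied to $f_m$ gives
$$
\Cov\big(f_m(\omega_0),f_m(\omega_t)\big)=\int_t^1\sum_{i=1}^m\Inf_i\big(f_m(\omega_0),f_m(\omega_s)\big)\,ds,
$$
noting that $\Inf_i(f_m(\omega_0),f_m(\omega_s))=0$ for $i>m$ since $f_m$ does not depend on those coordinates, so the finite sum is in fact the full edge-sum for $f_m$. The task is therefore to pass to the limit $m\to\infty$ on the right-hand side. The key step is to show that $\sum_{i=1}^\infty\Inf_i(f_m(\omega_0),f_m(\omega_s))$ converges, for each $s$, to $\sum_{i=1}^\infty\Inf_i(f(\omega_0),f(\omega_s))$, and that this convergence is dominated so that one may exchange limit and integral. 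For the integrand-level convergence I would use the representation from the proof of Lemma~\ref{lma:influence}, namely $\Inf_i(g(\omega_0),g(\omega_s))=\int Q_s(D_i^xg)\,dF(x)$ with $Q_s(h)=\E[h(\omega_0)h(\omega_s)]$, together with the Efron--Stein-type identity $\sum_{i=1}^\infty\E[(D_if(\omega))^2]\le\E[f(\omega)^2]$ that follows by summing~\eqref{eq:DfL2}-type bounds (more precisely, the martingale decomposition of $f$ along the coordinate filtration gives $\sum_i\E[(D_if)^2]=\Var(f)$ when $f$ has finite variance; only the finiteness is needed). This provides a uniform-in-$m$ summable bound $\Inf_i(f_m(\omega_0),f_m(\omega_s))\le\E[(D_if_m(\omega))^2]$, and one checks $\E[(D_if_m)^2]\to\E[(D_if)^2]$ by $L^2$-continuity of $D_i$, so a dominated-convergence argument (in the counting measure on $i$) yields the desired convergence of the sums. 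Since the sums are bounded by $\Var(f)<\infty$ uniformly in $m$ and $s$, a second application of dominated convergence (now in $s$ on $[t,1]$) lets us pass the limit through the integral, completing the identification.

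The main obstacle I anticipate is the bookkeeping around the two nested dominated-convergence arguments and verifying that $D_if_m\to D_if$ in $L^2$ with the right uniform control—in particular, making sure the bound $\Inf_i(f_m(\omega_0),f_m(\omega_s))\le \E[(D_if_m)^2]$ can be combined with $\sum_i\E[(D_if_m)^2]\le\Var(f_m)\le\Var(f)$ to produce a single dominating sequence independent of both $m$ and $s$. One clean way to package this: show $D_if_m=\E[D_if\mid\omega(1),\ldots,\omega(m)]$ for $i\le m$ (conditional expectation commutes with the coordinate-averaging operator since they act on disjoint coordinates), so that $\E[(D_if_m)^2]\le\E[(D_if)^2]=:d_i$ with $\sum_i d_i\le\Var(f)<\infty$; this $d_i$ serves as the dominating sequence directly, and the pointwise convergence $\Inf_i(f_m(\omega_0),f_m(\omega_s))\to\Inf_i(f(\omega_0),f(\omega_s))$ follows from $L^2$-continuity of the co-influence form together with $f_m\to f$ in $L^2$. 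With that lemma in hand the rest is routine.
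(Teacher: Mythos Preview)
Your overall strategy---truncate to $f_m=\E[f\mid\omega(1),\dots,\omega(m)]$, apply Proposition~\ref{prop:cov_finite}, and pass to the limit---is exactly what the paper does, and the reductions up to the finite-volume identity are fine. The gap is in the limiting step. Your dominated-convergence argument rests on the claim that $\sum_i\E[(D_if)^2]\le\Var(f)$ (or at least $<\infty$), which you justify via ``the martingale decomposition of $f$ along the coordinate filtration.'' But the martingale increments $\Delta_if=\E[f\mid\mathcal F_i]-\E[f\mid\mathcal F_{i-1}]$ are not the same as $D_if=f-\E_i[f]$; the orthogonal (ANOVA) decomposition gives $\sum_i\E[(D_if)^2]=\sum_{S\neq\emptyset}|S|\,\|f_S\|_2^2$, which can be strictly larger than $\Var(f)=\sum_{S\neq\emptyset}\|f_S\|_2^2$ and in fact infinite for $f\in L^2$. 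Concretely, with Rademacher coordinates and disjoint blocks $S_k$ of size $k$, the function $f=\sum_k k^{-1}\prod_{i\in S_k}\omega(i)$ has $\Var(f)=\sum_k k^{-2}<\infty$ but $\sum_i\E[(D_if)^2]=\sum_k k\cdot k^{-2}=\infty$. So your dominating sequence $d_i=\E[(D_if)^2]$ need not be summable, and the same issue invalidates the claim that ``the sums are bounded by $\Var(f)$ uniformly in $m$ and $s$'' which you use for the outer integral in $s$.

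The paper circumvents this by using \emph{monotone} rather than dominated convergence. The observation $D_i^xf_m=\E[D_i^xf\mid\mathcal F_m]$ (which you also note) lets one rewrite
\[
\E\big[D_i^xf_m(\omega_0)\,D_i^xf_m(\omega_s)\big]=\E\big[D_i^xf(X)\,D_i^xf(Y^{(m)})\big],
\]
where $Y^{(m)}$ is the vector with coordinatewise resampling probabilities $t_j=s$ for $j\le m$ and $t_j=1$ for $j>m$. By the coordinatewise monotonicity in Lemma~\ref{lma:mono_inf} this expression is nondecreasing in $m$, so each $\Inf_i(f_m(\omega_0),f_m(\omega_s))$ increases to its limit. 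Since the co-influences are nonnegative, monotone convergence handles both the sum over $i$ and the integral over $s$ without any summability hypothesis on $\E[(D_if)^2]$. This is the missing idea you need to replace the flawed domination.
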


Again, since edge weights are i.i.d.\ and there is a bijection between the set of edges of the nearest-neighbour lattice and the natural numbers,
we note that the variance formula in~\eqref{eq:varform} is an immediate consequence of Proposition~\ref{prop:cov_infinite} by taking $t=0$.

Our first lemma is an infinite volume version of Lemma~\ref{lma:monotonicity}, and is proved analogously. Since we shall need a version of the lemma where different coordinates are adjusted separately, we reuse the notation from the previous section. Let $X$ and $Z$ be two independent $\RR^\N$-valued random vectors with i.i.d.\ components distributed as $F$, and let $U_1,U_2,\ldots$ be independent uniform random variables on $[0,1]$. For $t_1,t_2,\ldots\in[0,1]$, let $Y(t_1,t_2,\ldots)$ be the $\RR^\N$-valued random vector whose coordinates are defined as in~\eqref{eq:XYZ}, and write $Y_t$ for $Y(t,t,\ldots)$ for compactness. As before, $(Y_0,Y_t)$ and $(\omega_0,\omega_t)$ have the same distribution, and we let
$$
Q_t(f):=\E[f(\omega_0)f(\omega_t)]=\E[f(Y_0)f(Y_t)].
$$

\begin{lemma}\label{lma:mono_inf}
For every $f\in L^2(F^\N)$ and $0\le s_i\le t_i\le1$ for $i=1,2,\ldots$, we have
$$
\E\big[f(X)f(Y(s_1,s_2,\ldots))\big]\ge\E\big[f(X)f(Y(t_1,t_2,\ldots))\big]\ge0.
$$
In particular, for $0\le s\le t\le 1$, we have $Q_s(f)\ge Q_t(f)\ge0$.
\end{lemma}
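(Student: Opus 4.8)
The plan is to mimic the coupling argument from Lemma~\ref{lma:monotonicity}, adapted to the setting where each coordinate is resampled with its own probability, and then to reduce the infinite-volume statement to the finite-volume one by a martingale/truncation limit. First I would record that the second assertion (the claim about $Q_s(f)\ge Q_t(f)\ge0$) is just the special case $s_i=s$, $t_i=t$ of the first, so it suffices to prove the general inequality. For the general inequality, the key observation is that the resampling in~\eqref{eq:XYZ} acts independently on each coordinate, so I would construct, coordinate by coordinate, an intermediate vector $V'$ obtained from $X$ by resampling coordinate $i$ with probability $r_i:=(t_i-s_i)/(1-s_i)$ (which lies in $[0,1]$ since $0\le s_i\le t_i\le1$; when $s_i=1$ one has $s_i=t_i$ and the coordinate plays no role, so $r_i$ may be set to $0$). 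Then one checks, exactly as in Lemma~\ref{lma:monotonicity}, that $(X,Y(s_1,s_2,\ldots))$ has the same joint law as a pair $(X,Y')$ where $Y'$ is obtained from $V'$ by further independent resampling with probabilities $s_i$, and that $(X,Y(t_1,t_2,\ldots))$ has the same joint law as $(X,Z')$ with $Z'$ obtained from $V'$ by resampling with probabilities $s_i$ as well — but with $X$ itself re-expressed as a resampling of $V'$; the point is that conditionally on $V'$ (or the appropriate sub-$\sigma$-algebra), $X$ and $Y(s_1,\ldots)$ are independent, and likewise $X$ and $Y(t_1,\ldots)$ are conditionally independent given a coarser $\sigma$-algebra. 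This yields
$$
\E\big[f(X)f(Y(s_1,s_2,\ldots))\big]=\E\big[\E[f(X)\mid\mathcal{G}_s]^2\big]\ge0,
$$
and similarly for the $t$-version with a $\sigma$-algebra $\mathcal{G}_t\subseteq\mathcal{G}_s$, whence Jensen's inequality gives $\E[f(X)f(Y(t_1,\ldots))]=\E[\E[f(X)\mid\mathcal{G}_t]^2]\le\E[\E[f(X)\mid\mathcal{G}_s]^2]=\E[f(X)f(Y(s_1,\ldots))]$, as required. This is essentially verbatim the proof of Lemma~\ref{lma:monotonicity} with $p,q,r$ replaced by the coordinatewise versions $s_i,t_i,r_i$.

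The only genuinely new issue, relative to the finite case, is justifying that these manipulations — in particular the conditional independence statements and the identification of joint laws — go through for functions of infinitely many variables. Here I would invoke a standard $L^2$-approximation: for $f\in L^2(F^\N)$ let $f_k:=\E[f\mid \omega(1),\ldots,\omega(k)]$, which depends on only finitely many coordinates and satisfies $f_k\to f$ in $L^2(F^\N)$ by the martingale convergence theorem (and $\|f_k\|_2\le\|f\|_2$). Each $f_k$ is effectively a function in $L^2(F^k)$, so the finite-volume inequality — either Lemma~\ref{lma:monotonicity} directly in the case $s_i\equiv s$, $t_i\equiv t$, or its coordinatewise refinement above, which is proved identically — applies to $f_k$. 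Since $(X,Y(s_1,s_2,\ldots))$ and $(X,Y(t_1,t_2,\ldots))$ are fixed pairs of $L^2$ random vectors and $\E[g(A)g(B)]$ depends continuously (in fact is jointly continuous, by Cauchy--Schwarz) on $g$ in $L^2$, passing to the limit $k\to\infty$ along the inequalities for $f_k$ yields the inequalities for $f$. The non-negativity passes to the limit in the same way.

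The step I expect to require the most care is the bookkeeping in the coupling that identifies the joint laws: one must verify that composing "resample with probability $r_i$ (from $X$ to $V'$)" with "resample with probability $s_i$" reproduces exactly "resample with probability $t_i$", i.e.\ that $1-(1-r_i)(1-s_i)=t_i$, which holds precisely because $r_i=(t_i-s_i)/(1-s_i)$; and one must track which fresh independent copies of $F^\N$ are used where, so that the conditional independence of $X$ and the $Y$-vectors given the appropriate $\sigma$-algebra is valid. All of this is a routine but slightly fiddly repetition of the argument in Lemma~\ref{lma:monotonicity}, now with a separate clock per coordinate, and I would present it by explicitly pointing to that proof and indicating only the modifications.
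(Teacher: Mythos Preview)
Your approach is correct and matches the paper's proof, which is literally the one-line remark that Lemma~\ref{lma:monotonicity} adapts verbatim with coordinate-dependent parameters $p_i,q_i,r_i$. Two small points: the $L^2$-martingale truncation is unnecessary, since nothing in the coupling/Jensen argument of Lemma~\ref{lma:monotonicity} uses finite dimensionality and it goes through directly in $\RR^\N$; and watch the bookkeeping --- in the paper's construction the intermediate probabilities are $p_i=1-\sqrt{1-s_i}$, $q_i=1-\sqrt{1-t_i}$, $r_i=(q_i-p_i)/(1-p_i)$, with $X$ and $Y$ built \emph{symmetrically} from $V'$ (not $V'$ from $X$), so the composition identity you want is $1-(1-r_i)(1-p_i)=q_i$ rather than the one with $s_i,t_i$ directly.
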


\begin{proof}
This follows from a straightforward adaptation of the proof of Lemma~\ref{lma:monotonicity}, where the probabilities $p$, $q$ and $r$ are now coordinate dependent.
\end{proof}

We similarly obtain the following extension of Lemma~\ref{lma:influence}.

\begin{lemma}\label{lma:inf_inf}
For every $f\in L^2(F^\N)$ and $i=1,2,\ldots$, we have $D_i^xf\in L^2(F^\N)$ for $F$-almost every $x\in\RR$. Moreover, for $0\le s\le t\le 1$, the co-influences are finite and satisfy
$$
\Inf_i(f(\omega_0),f(\omega_s))\ge\Inf_i(f(\omega_0),f(\omega_t))\ge0.
$$
\end{lemma}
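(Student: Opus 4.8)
The plan is to follow the proof of Lemma~\ref{lma:influence} essentially verbatim, replacing the finite-volume monotonicity input (Lemma~\ref{lma:monotonicity}) by its infinite-volume counterpart (Lemma~\ref{lma:mono_inf}). First I would establish finiteness of the co-influences and the $L^2$-membership of $D_i^xf$. By the Cauchy--Schwarz inequality,
$$
\Inf_i\big(f(\omega_0),f(\omega_t)\big)=\int\E\big[D_i^xf(\omega_0)D_i^xf(\omega_t)\big]\,dF(x)\le\int\E\big[(D_i^xf(\omega))^2\big]\,dF(x),
$$
and since the pair consisting of $\sigma_i^x(\omega)$ and an independent $x\sim F$ has the same law as $\omega$, the right-hand side equals $\E[(D_if(\omega))^2]$, which is at most $\E[f(\omega)^2]<\infty$ by (the $L^2(F^\N)$-analogue of)~\eqref{eq:DfL2}. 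In particular the co-influences are finite, and $D_i^xf\in L^2(F^\N)$ for $F$-almost every $x\in\RR$.

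Next I would rewrite the co-influence in the form
$$
\Inf_i\big(f(\omega_0),f(\omega_t)\big)=\int Q_t\big(D_i^xf\big)\,dF(x),
$$
which is legitimate for $F$-almost every $x$, since for such $x$ the function $D_i^xf$ lies in $L^2(F^\N)$ and hence $Q_t(D_i^xf)$ is well defined. Applying Lemma~\ref{lma:mono_inf} to the function $D_i^xf$, the quantity $Q_t(D_i^xf)$ is non-negative and non-increasing in $t$, for $F$-almost every $x$. Both non-negativity and monotonicity are preserved under integration over $x$ with respect to $F$, which gives $\Inf_i(f(\omega_0),f(\omega_s))\ge\Inf_i(f(\omega_0),f(\omega_t))\ge0$ for $0\le s\le t\le1$, completing the proof.

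I do not expect any real obstacle here: the argument is a routine transcription of the finite-volume case. The only points requiring (minor) care are the measurability and integrability bookkeeping needed to ensure that $D_i^xf\in L^2(F^\N)$ for $F$-almost every $x$, so that $Q_t(D_i^xf)$ is well defined and Lemma~\ref{lma:mono_inf} applies, and then to pass the pointwise-in-$x$ monotonicity through the integral; both are handled exactly as in the proof of Lemma~\ref{lma:influence}.
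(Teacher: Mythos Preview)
Your proposal is correct and follows essentially the same approach as the paper: the paper's proof of Lemma~\ref{lma:inf_inf} simply states that it is identical to the proof of Lemma~\ref{lma:influence}, with Lemma~\ref{lma:mono_inf} in place of Lemma~\ref{lma:monotonicity}. Your write-up reproduces that argument in full detail, including the Cauchy--Schwarz step for finiteness and the rewriting of the co-influence as $\int Q_t(D_i^xf)\,dF(x)$ before invoking monotonicity.
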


\begin{proof}
The proof is identical to the proof of Lemma~\ref{lma:influence}.
\end{proof}

We are now in a position to derive the main result of this section.

\begin{proof}[Proof of Proposition~\ref{prop:cov_infinite}]
That the covariance is non-negative and non-increasing is immediate from Lemma~\ref{lma:mono_inf} and the fact that $\Cov(f(\omega_0),f(\omega_t))=Q_t(f)-Q_1(f)$. We shall derive the remaining formula by approximating $f$ by the function obtained by averaging over all but the first $m$ coordinates, and apply Proposition~\ref{prop:cov_finite}.

Let $\mathcal{F}_m$ denote the $\sigma$-algebra generated by $\{\omega(i),\omega'(i),U(i):i=1,2,\ldots,m\}$, and set
$$
h_m(\omega_t):=\E[f(\omega_t)|\mathcal{F}_m].
$$
Since $f\in L^2(F^\N)$, the sequence $(h_m)_{m\ge1}$ forms an $L^2$-bounded martingale, and by Levy's upwards theorem, as $m\to\infty$, we have that
$$
h_m(\omega_t)\to f(\omega_t)\quad\text{almost surely and in }L^2.
$$
It follows, in particular, that
\begin{equation}\label{eq:cov_lim}
\Cov\big(h_m(\omega_0),h_m(\omega_t)\big)\to\Cov\big(f(\omega_0),f(\omega_t)\big),\quad\text{as }m\to\infty.
\end{equation}
Moreover, applying Proposition~\ref{prop:cov_finite}, we obtain that
\begin{equation}\label{eq:cov_fin}
\Cov\big(h_m(\omega_0),h_m(\omega_t)\big)=\int_t^1\sum_{i=1}^m\Inf_i\big(h_m(\omega_0),h_m(\omega_s)\big)\,ds.
\end{equation}

We next want to argue that, for every $i\in\N$, $m\ge i$ and $s\in[0,1]$, we have
\begin{equation}\label{eq:inf_mon}
\Inf_i\big(h_m(\omega_0),h_m(\omega_s)\big)\to\Inf_i\big(f(\omega_0),f(\omega_s)\big)\quad\text{as }m\to\infty,
\end{equation}
and that the convergence is monotone in $m$. (For $m<i$ we have $\Inf_i(h_m(\omega_0),h_m(\omega_s))=0$, but this is not a crucial observation.)
Once this has been established, since the co-influences are non-negative according to Lemma~\ref{lma:influence}, it follows from the monotone convergence theorem, together with~\eqref{eq:cov_lim} and~\eqref{eq:cov_fin}, that
$$
\Cov\big(f(\omega_0),f(\omega_t)\big)=\lim_{m\to\infty}\Cov\big(h_m(\omega_0),h_m(\omega_t)\big)=\int_t^1\sum_{i=1}^\infty\Inf_i\big(f(\omega_0),f(\omega_s)\big)\,ds,
$$
as required.

It remains to establish the monotone convergence in~\eqref{eq:inf_mon}.
By Lemma~\ref{lma:inf_inf}, we have $D_i^xf\in L^2(F^\N)$ for $F$-almost every $x\in\RR$. For these values of $x$, Fubini's theorem shows that
\begin{equation}\label{eq:Dh_fubini}
D_i^xh_m(\omega_s)=\E[D_i^xf(\omega_s)|\mathcal{F}_m]\quad\text{almost surely},
\end{equation}
so that, by Levy's upwards theorem, as $m\to\infty$
$$
D_i^xh_m(\omega_s)\to D_i^xf(\omega_s)\quad\text{almost surely and in }L^2.
$$
We conclude in particular that, for $F$-almost every $x$, we have
\begin{equation}\label{eq:EDh_lim}
\E[D_i^xh_m(\omega_0)D_i^xh_m(\omega_s)]\to\E[D_i^xf(\omega_0)D_i^xf(\omega_s)]\quad\text{as }m\to\infty.
\end{equation}

To see that the convergence in~\eqref{eq:EDh_lim} is monotone, let $Y^{(m)}$ denote the vector $Y(t_1,t_2,\ldots)$ where $t_i=s$ for $i\le m$ and $t_i=1$ for $i>m$. By~\eqref{eq:Dh_fubini} and the independence between $X$ and $Y^{(m)}$ in coordinates $i>m$, we may rewrite the left-hand side of~\eqref{eq:EDh_lim} as
$$
\E[D_i^xh_m(\omega_0)D_i^xh_m(\omega_s)]=\E\big[\E[D_i^xf(\omega_0)|\mathcal{F}_m]\E[D_i^xf(\omega_s)|\mathcal{F}_m]\big]=\E[D_i^xf(X)D_i^xf(Y^{(m)})],
$$
for those $x$ for which $D_i^xf$ is in $L^2(F^\N)$. For these values of $x$, the expression is non-negative and non-decreasing in $m$, according to Lemma~\ref{lma:mono_inf}, so the convergence in~\eqref{eq:EDh_lim} is indeed monotone for $F$-almost every $x$. Consequently, it follows from~\eqref{eq:EDh_lim} and the monotone convergence theorem that, as $m\to\infty$, we have
\begin{equation}\label{eq:inf_pre_mon}
\Inf_i\big(h_m(\omega_0),h_m(\omega_s)\big)=\int\E[D_i^xh_m(\omega_0)D_i^xh_m(\omega_s)]\,dF(x)\to\Inf_i\big(f(\omega_0),f(\omega_s)\big).
\end{equation}
Moreover, since the left-hand side of~\eqref{eq:EDh_lim} is monotone in $m$ for $F$-almost every $x$, it follows that the left-hand side in~\eqref{eq:inf_pre_mon} is again monotone in $m$. Hence, the convergence in~\eqref{eq:inf_pre_mon} is monotone, which proves~\eqref{eq:inf_mon}.
\end{proof}

\section{Influences in first-passage percolation}\label{sec:influences}

The dynamic covariance formula in Proposition~\ref{prop:cov_infinite} connects the covariance of a function to the co-influences of its variables. We now move on to analyse the co-influences in the setting of the first-passage percolation. Our aim will be to establish the connection between fluctuations and geodesics described in~\eqref{eq:fluct_geos} and~\eqref{eq:cov_geos}. In this section we make some preliminary observations, and fix notation that will be used throughout.

Henceforth, $F$ will denote (the distribution function of) some probability measure on $[0,\infty)$, used to sample the edge weights of the $\Z^d$ lattice. For ease of notation, we henceforth fix $v\in\Z^d$ and let $T:=T(0,v)=T(0,v)(\omega)$ denote the distance between $0$ and $v$ in the configuration $\omega$. It is well-known that finite second moment for $F$ is sufficient for the first-passage time $T=T(0,v)$, for every $v\in\Z^d$, to have finite second moment too. In the above notation, Proposition~\ref{prop:cov_infinite} reduces to
\begin{equation}\label{eq:VarT}
\Cov(T_0,T_t)=\int_t^1\sum_{e\in\Ec}\Inf_e(T_0,T_s)\,ds,
\end{equation}
where, as before, $T_t:=T(\omega_t)=T(0,v)(\omega_t)$ and $\omega_t$ denotes the dynamical edge weight as defined in~\eqref{passage_times}. Taking $t=0$ we recover a formula for the variance, and by expanding the square we find that
\begin{equation}\label{eq:expansion}
\E\big[(T_0-T_t)^2\big]=2\Var(T)-2\Cov(T_0,T_t)=2\int_0^t\sum_{e\in\Ec}\Inf_e(T_0,T_s)\,ds.
\end{equation}

Since any two vertices of $\Z^d$ (assuming $d\ge2$) are connected by (at least) two disjoint paths, it follows that the passage time $T(\sigma_e^x\omega)$ between $0$ and $v$ in the configuration $\sigma_e^x\omega$ is bounded as a function of $x$, for every $\omega$ and $e$. As a consequence, also $D_e^xT(\omega)$ takes a well-defined finite value for every $x$ and $\omega$ and, by definition (in~\eqref{eq:Df}), this function integrates to zero, i.e.\
\begin{equation}\label{eq:D_balance}
\int D_e^xT\,dF(x)=0
\end{equation}
for every edge $e$ and weight configuration $\omega$.

Recall that $r$ denotes the infimum of the support of $F$. We shall typically think of $x\mapsto D_e^xT$ as a function from $[r,\infty)\to\RR$. We note that $T(\omega)$ is monotone (non-decreasing) in each of its coordinates $\omega(e)$, from which it follows that $D_e^xT$ is monotone in $x$, for every $e$ and $\omega$. In fact, either $D_e^xT$ is constant equal to zero (as a function of $x$), in case there is a path not using $e$ which is optimal (for the configuration $\sigma_e^x\omega$) for all values of $x\ge r$, or $D_e^xT$ grows linearly for small values of $x$, and is constant for large values of $x$; see Figure~\ref{fig:DTt}.

Since $D_e^xT_t$ is monotone and constant for large $x$, there is a point at which increasing $x$ no longer has an effect on $D_e^xT_t$. We denote this point (depicted in Figure~\ref{fig:DTt}) by $Z_t(e)$, i.e.\
\begin{equation*}
Z_t(e):=\min\{x\ge r:D_e^xT_t=D_e^yT_t\text{ for all }y\ge x\}.
\end{equation*}
In particular, we have $r\le Z_t(e)<\infty$ for every weight configuration $\omega_t$. Since $D_e^xT_t$ grows linearly for $x\le Z_t(e)$ and is constant for $x>Z_t(e)$, it must be of the form $C-(Z_t(e)-x)_+$ for some $C=C(\omega_t)$ that does not depend on $x$, and $f_+$ denotes the positive part of the function $f$. Since $D_e^xT_t$ integrates to zero, it follows from~\eqref{eq:D_balance} that
\begin{equation}\label{eq:DZ}
D_e^xT_t=\int(Z_t(e)-y)_+\,dF(y)-(Z_t(e)-x)_+.
\end{equation}

\begin{figure}[htbp]
\begin{center}
\begin{tikzpicture}[scale=.5]
\draw (0,-4) -- (0,3);
\draw[->] (-1,0) -- (14.5,0);
\draw[draw=blue,thick] (0,-3) -- (4.5,1.5) -- (14,1.5);
\draw[dashed] (4.5,0) -- (4.5,1.5);
\draw (4.5,-.25) -- (4.5,.25);
\draw (4.5,-.25) node[anchor=north] {$Z_t(e)$};
\draw (9,1.5) node[anchor=south] {$D_e^xT_t$};
\draw (0,3) node[anchor=south] {$x=r$};
\end{tikzpicture}
\end{center}
\caption{\small Plot of the function $x \mapsto D_e^x T_t$ over the interval $[r, \infty)$.}
\label{fig:DTt}
\end{figure}
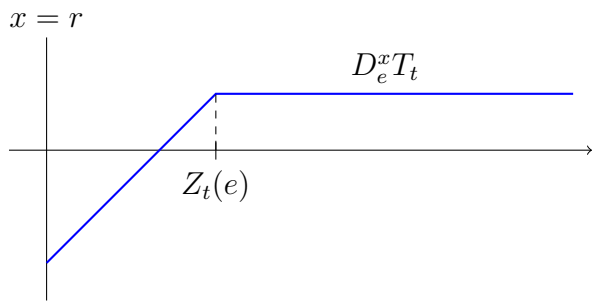

An immediate consequence of~\eqref{eq:DZ} is that
\begin{equation}\label{eq:DTmu}
D_e^xT_t\ge\int(Z_t(e)-y)\,dF(y)-Z_t(e)=-\mu,
\end{equation}
where $\mu$ is the mean of $F$. Since $D_e^xT_t$ is constant equal to zero when $Z_t(e)=r$, we obtain
\begin{equation}\label{eq:DT_bound}
|D_e^xT_t|\le(\mu+x)\mathbf{1}_{\{Z_t(e)>r\}},
\end{equation}
which can be used as a basis for upper bounding the co-influence of $e$.

Although we shall not make use of the following observation in this paper, we note that the representation in~\eqref{eq:DZ} allows us to interpret the co-influence as the covariance over the given coordinate, in the sense described next. Let
$$
\mathcal{F}_e:=\sigma\big(\{\omega(e'),\omega'(e'),U(e'):e'\neq e\}\big),
$$
i.e.\ the sigma-algebra generated by all variables except for those associated with the edge $e$.
Since $D_e^xT_t$ is determined by $\omega_t(e')$ for $e'\neq e$, it follows that $Z_t(e)$ is $\mathcal{F}_e$-measurable. From~\eqref{eq:DZ} we obtain that
\begin{equation}\label{eq:inf_cov}
\Inf_e(T_0,T_t)=\E\big[\Cov\big((Z_0(e)-\tilde\omega)_+,(Z_t(e)-\tilde\omega)_+\big|\mathcal{F}_e\big)\big],
\end{equation}
where $\tilde\omega$ is a generic $F$-distributed random variable independent of everything else.

Our goal will be to relate the co-influences to the joint inclusion in the geodesic. Let $\pi_t:=\pi_t(0,v)$. The connection to geodesics can be conceived from~\eqref{eq:inf_cov}, as the event $\{e\in\pi_t\}$ is roughly equivalent with the weight at $e$ not exceeding $Z_t(e)$. In turn, the joint inclusion is connected to the expected overlap through the identity
\begin{equation}\label{eq:overlap_id}
\E[|\pi_0\cap\pi_t|]=\sum_{e\in\Ec}\PP(e\in\pi_0\cap\pi_t).
\end{equation}
Before embarking on the quest to make this precise, we end this section with an observation regarding the expected overlap of the geodesics.

\begin{lemma}\label{lma:overlap_monotonicity}
For every $v\in\Z^d$, the function $\E[|\pi_0\cap\pi_t|]$ is non-increasing in $t$.
\end{lemma}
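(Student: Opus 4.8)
The plan is to reduce the claim to a coordinatewise statement and then quote the monotonicity already established for the correlation operator $Q_t$. For a fixed edge $e\in\Ec$, write $\pi(\omega)$ for the intersection of all geodesics between $0$ and $v$ in a (static) weight configuration $\omega$, as in~\eqref{eq:pi_def}, and set $f_e(\omega):=\mathbf{1}_{\{e\in\pi(\omega)\}}$. This is a bounded function, hence $f_e\in L^2(F^\N)$, and $f_e(\omega_t)=\mathbf{1}_{\{e\in\pi_t\}}$, so that
$$
Q_t(f_e)=\E\big[f_e(\omega_0)f_e(\omega_t)\big]=\PP(e\in\pi_0\cap\pi_t).
$$
By Lemma~\ref{lma:mono_inf} the function $t\mapsto Q_t(f_e)$ is non-increasing on $[0,1]$, i.e.\ $\PP(e\in\pi_0\cap\pi_t)$ is non-increasing in $t$ for each $e$. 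Summing over $e\in\Ec$ and using~\eqref{eq:overlap_id} --- the interchange of summation and expectation being legitimate by Tonelli's theorem, since all summands are non-negative --- yields that
$$
\E[|\pi_0\cap\pi_t|]=\sum_{e\in\Ec}\PP(e\in\pi_0\cap\pi_t)
$$
is non-increasing in $t$, which is the assertion.

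The only point that deserves a comment is the measurability of the event $\{e\in\pi_t\}$, needed for $f_e$ to be a genuine element of $L^2(F^\N)$ (and, in fact, already for $|\pi_0\cap\pi_t|$ to be a random variable). When $F$ is continuous the geodesic between $0$ and $v$ is almost surely unique and this is immediate. In general, under~\eqref{eq:pc_moment} the relevant infima are attained, and $\{e\in\pi_t\}$ agrees up to a null event with $\{T_t^{(e)}(0,v)>T_t(0,v)\}$, where $T_t^{(e)}(0,v)$ denotes the first-passage time at time $t$ among nearest-neighbour paths avoiding $e$; both passage times being measurable functions of $\omega_t$, so is $f_e$. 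We shall take this standard fact for granted.

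I do not expect a real obstacle: the work is done inside Lemmas~\ref{lma:monotonicity} and~\ref{lma:mono_inf}, whose coupling realises $(\omega_0,\omega_s)$ and $(\omega_0,\omega_t)$ on a common space and extracts monotonicity of $Q_t$ via conditional independence and Jensen's inequality, and applying this to the bounded functions $f_e$ requires nothing new. Should a more self-contained argument be preferred, one may instead rerun that coupling directly: with $V,V',X,Y,Z$ as in the proof of Lemma~\ref{lma:monotonicity} one conditions on $V$ (respectively $V'$), uses that $X,Z$ (respectively $X,Y$) are then conditionally i.i.d.\ to write $\PP(e\in\pi(X)\cap\pi(Z))=\E[g_e(V)^2]$ and $\PP(e\in\pi(X)\cap\pi(Y))=\E[h_e(V')^2]$ with $g_e(V)=\E[h_e(V')\mid V]$, and concludes by Jensen's inequality. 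Either route is short.
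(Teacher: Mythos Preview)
Your proof is correct and essentially identical to the paper's: both apply the monotonicity of $Q_t$ (Lemma~\ref{lma:mono_inf}) to the indicator $f_e=\mathbf{1}_{\{e\in\pi\}}$ and then sum over edges via~\eqref{eq:overlap_id}. Your additional remarks on measurability and the alternative self-contained coupling are sound but go beyond what the paper records.
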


\begin{proof}
Consider the function $f_e=\mathbf{1}_{\{e\in\pi\}}$. By Lemma~\ref{lma:mono_inf}, we have that
$$
Q_t(f_e)=\E[f_e(\omega_0)f_e(\omega_t)]=\PP(e\in\pi_0\cap\pi_t)
$$
is non-increasing in $t$. Hence, the result follows from~\eqref{eq:overlap_id}.
\end{proof}

We end this section showing how it follows from~\eqref{eq:cov_geos} that the system has a stability/chaos transition at $t\asymp\frac{1}{|v|}\Var(T)$. Apart from the relation itself, we make use of the linearity of geodesics in~\eqref{eq:linear} and monotonicity of the overlap in Lemma~\ref{lma:overlap_monotonicity}.

\begin{prop}\label{prop:transition}
Suppose that $F$ satisfies~\eqref{eq:pc_moment} and that for all $v\in\Z^d$ and $t\in[0,1]$
\begin{equation}\label{eq:assumption}
\E\big[(T_0(0,v)-T_t(0,v))^2\big]\asymp\int_0^t\E\big[\big|\pi_0(0,v)\cap\pi_s(0,v)\big|\big]\,ds.
\end{equation}
Then there exists $C<\infty$ such that for all $v\in\Z^d$ and $0<\alpha<\frac{|v|}{\Var(T(0,v))}$ the following holds.
\begin{enumerate}[\quad (a)]
\item For $t\le\alpha\frac{1}{|v|}\Var(T(0,v))$ we have $\Corr\big(T_0(0,v),T_t(0,v)\big)\ge1-C\alpha$.
\item For $t\ge\alpha\frac{1}{|v|}\Var(T(0,v))$ we have $\E\big[\big|\pi_0(0,v)\cap\pi_t(0,v)\big|\big]\le C\frac{|v|}{\alpha}.$
\end{enumerate}
\end{prop}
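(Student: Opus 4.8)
The plan is to extract both statements directly from the hypothesised relation~\eqref{eq:assumption} by combining it with the linear growth of geodesics~\eqref{eq:linear} and the monotonicity of the overlap in Lemma~\ref{lma:overlap_monotonicity}. Write $V:=\Var(T(0,v))$, $L:=|v|$, and let $g(t):=\E[|\pi_0(0,v)\cap\pi_t(0,v)|]$, which by Lemma~\ref{lma:overlap_monotonicity} is non-increasing in $t$ with $g(0)=\E[|\pi_0(0,v)|]\le \frac1c L$ by~\eqref{eq:linear}, and $\int_0^1 g(s)\,ds = \E[|\pi_0\cap\pi_1|]$-type quantity bounded below via~\eqref{eq:assumption} at $t=1$ by $c'\,\E[(T_0-T_1)^2] = 2c'\,V$ (using $\E[(T_0-T_1)^2]=2\Var(T)$ since $\omega_0,\omega_1$ are independent). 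Let $C_1,C_2$ denote the two implicit constants in~\eqref{eq:assumption}, so $C_1 \int_0^t g(s)\,ds \le \E[(T_0-T_t)^2] \le C_2\int_0^t g(s)\,ds$.

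For part (b): fix $t\ge \alpha V/L$. By monotonicity of $g$ we have $t\,g(t)\le \int_0^t g(s)\,ds \le \frac{1}{C_1}\E[(T_0-T_t)^2]$. Now bound $\E[(T_0-T_t)^2]\le 2\E[(T_0-\E T_0)^2] + 2\E[(T_t-\E T_t)^2] = 4V$ (again using equidistribution of $T_t$, and $\E T_0 = \E T_t$). Hence $g(t) \le \frac{4V}{C_1 t} \le \frac{4V}{C_1}\cdot\frac{L}{\alpha V} = \frac{4}{C_1}\cdot\frac{L}{\alpha}$, which is the claimed bound with $C = 4/C_1$ (up to adjusting $C$ to also cover part (a)).

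For part (a): fix $t\le \alpha V/L$. We must lower bound $\Corr(T_0,T_t) = \Cov(T_0,T_t)/V = 1 - \tfrac{1}{2V}\E[(T_0-T_t)^2]$, so it suffices to show $\E[(T_0-T_t)^2]\le 2C\alpha V$. By the upper bound in~\eqref{eq:assumption} and monotonicity of $g$ (so $g(s)\le g(0)\le \frac1c L$ for $s\le t$), we get $\E[(T_0-T_t)^2] \le C_2\int_0^t g(s)\,ds \le C_2\, t\, g(0) \le \frac{C_2}{c}\, t\, L \le \frac{C_2}{c}\cdot\frac{\alpha V}{L}\cdot L = \frac{C_2}{c}\,\alpha V$. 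Therefore $\Corr(T_0,T_t)\ge 1 - \frac{C_2}{2c}\alpha$, giving part (a) with $C = C_2/(2c)$. Taking $C := \max\{4/C_1,\ C_2/(2c)\}$ handles both parts simultaneously; the constraint $\alpha < L/V$ guarantees the relevant thresholds $\alpha V/L$ lie in $[0,1]$ so that the dynamical configuration $\omega_t$ is well-defined.

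The argument is essentially bookkeeping once~\eqref{eq:assumption} is granted; there is no substantial obstacle here, since the hypothesis does all the work and the only inputs are the trivial variance identity $\E[(T_0-T_t)^2]\le 4\Var(T)$, the lower bound $g$ is non-increasing, and the linear upper bound on $\E[|\pi_0|]$. The one point requiring a little care is making sure the two implicit constants $C_1,C_2$ from the $\asymp$ relation are tracked correctly and that the final single constant $C$ is chosen to dominate both $4/C_1$ and $C_2/(2c)$; one should also note that $\Corr$ is well-defined (i.e.\ $V>0$) for $|v|$ large, which follows since otherwise $T(0,v)$ would be deterministic, contradicting, e.g., the positive probability of distinct weights along disjoint paths under~\eqref{eq:pc_moment}.
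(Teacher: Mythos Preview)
Your proof is correct and follows essentially the same route as the paper's: both arguments derive part~(a) by combining the upper bound in~\eqref{eq:assumption} with $g(s)\le g(0)\le \tfrac1c|v|$ from~\eqref{eq:linear}, and part~(b) by combining the lower bound in~\eqref{eq:assumption} with the monotonicity $t\,g(t)\le\int_0^t g(s)\,ds$ from Lemma~\ref{lma:overlap_monotonicity}. The only cosmetic difference is that for part~(b) you bound $\E[(T_0-T_t)^2]\le 4\Var(T)$ via $(a+b)^2\le 2a^2+2b^2$, whereas the paper uses the sharper $\E[(T_0-T_t)^2]=2\Var(T)-2\Cov(T_0,T_t)\le 2\Var(T)$ (non-negativity of the covariance); this affects only the value of the final constant $C$.
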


\begin{proof}
As above, we write $T_t=T_t(0,v)$ and $\pi_t=\pi_t(0,v)$ for short. We first establish stability for small $t$. Using~\eqref{eq:assumption} and~\eqref{eq:linear} we have, for some $C<\infty$, that
$$
\E\big[(T_0-T_t)^2\big]\le C\int_0^t\E[|\pi_0\cap\pi_s|]\,ds\le Ct\,\E[|\pi_0|]\le C^2t|v|.
$$
Together with~\eqref{eq:expansion} we obtain for $t\le\alpha\frac{1}{|v|}\Var(T)$ that
$$
\Cov(T_0,T_t)=\Var(T)-\frac12\E\big[(T_0-T_t)^2\big]\ge\Var(T)-\frac{C^2}{2}t|v|\ge\Var(T)\Big[1-\frac{C^2}{2}\alpha\Big].
$$

We next establish chaos for large $t$. From~\eqref{eq:assumption} and Lemma~\ref{lma:overlap_monotonicity} there exists $c>0$ such that
$$
\Var(T)\ge c\int_0^t\E[|\pi_0\cap\pi_s|]\,ds\ge ct\,\E[|\pi_0\cap\pi_t|].
$$
For $t\ge\alpha\frac{1}{|v|}\Var(T)$ this gives
$$
\E[|\pi_0\cap\pi_t|]\le\frac1c\frac{|v|}{\alpha},
$$
as required.
\end{proof}

\section{Integer-valued edge weight distributions}\label{sec:integer}

Since the analysis of the co-influences is more straightforward in the integer-valued setting, we consider this setting first. Our goal is thus to prove Theorem~\ref{th:integer}. In view of Propositions~\ref{prop:cov_infinite} and~\ref{prop:transition}, as well as~\eqref{eq:overlap_id}, the key step will be to establish the following.

\begin{prop}\label{prop:int_inf}
Suppose that $F$ is supported on $\{0,1,2,\ldots\}$ and that~\eqref{eq:pc_moment} holds. Then
$$
\Inf_e(T_0,T_t)\asymp\PP(e\in\pi_0\cap \pi_t)\quad\text{for all }t\in[0,1], v\in\Z^d\text{ and }e\in\mathcal{E}.
$$
\end{prop}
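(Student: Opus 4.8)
The plan is to prove the two-sided bound $\Inf_e(T_0,T_t)\asymp\PP(e\in\pi_0\cap\pi_t)$ by exploiting the explicit representation~\eqref{eq:DZ} of $D_e^xT_t$ together with the special structure of integer-valued weights, namely that $D_e^xT_t$ is either identically zero or jumps by a definite amount. First I would record the key consequence of integer support: for each $t$ and each realisation, the random variable $Z_t(e)$ defined before~\eqref{eq:DZ} takes values in $\{r,r+1,r+2,\ldots\}$, and the event $\{Z_t(e)>r\}$ is (up to a bounded-probability discrepancy that I will control) the event $\{e\in\pi_t\}$. Indeed, $e$ lies on \emph{every} geodesic precisely when removing $e$ — or rather raising its weight high enough — strictly increases the distance, which by integrality means it increases by at least $1$; conversely if $e$ is on no geodesic then the alternative path already realises $T_t$ and $D_e^xT_t\equiv$ const, so $Z_t(e)=r$. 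The subtle point (atoms at $r$ when $F(r)>0$, and ties among geodesics) is exactly why $\pi_t$ was defined in~\eqref{eq:pi_def} as the intersection of all geodesics; I would show $\{Z_t(e)>r\}\subseteq\{$ some geodesic uses $e\}$ and relate $\{e\in\pi_t\}$ to the event that $Z_t(e)>r$ \emph{and} the weight $\omega_t(e)$ is small enough that $e$ is forced, absorbing the slack into the $\asymp$ constants using $F(0)<p_c$.

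For the \textbf{lower bound}, I would start from the identity~\eqref{eq:inf_cov}, $\Inf_e(T_0,T_t)=\E[\Cov((Z_0(e)-\tilde\omega)_+,(Z_t(e)-\tilde\omega)_+\mid\mathcal F_e)]$. On the event $\{Z_t(e)>r\}$ we have (since $Z_0(e)\ge Z_t(e)$ always, as co-influences decrease and the geodesic at time $0$ is "older") $Z_0(e)\ge Z_t(e)\ge r+1$, and the conditional covariance of $(Z_0(e)-\tilde\omega)_+$ with $(Z_t(e)-\tilde\omega)_+$ is bounded below by a positive constant: both functions are non-decreasing in the single variable $\tilde\omega^{-}$... more precisely both are non-increasing in $\tilde\omega$, hence positively correlated, and the fluctuation of $(Z_t(e)-\tilde\omega)_+$ is at least of constant order because $Z_t(e)\ge r+1$ forces $\tilde\omega\mapsto(Z_t(e)-\tilde\omega)_+$ to take at least two distinct values on a set of probability bounded below (here I use that $F$ has mass near $r$ and $F(0)<1$). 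This gives $\Inf_e(T_0,T_t)\ge c\,\PP(Z_t(e)>r)\ge c'\,\PP(e\in\pi_0\cap\pi_t)$ after relating $\{Z_t(e)>r\}\cap\{Z_0(e)>r\}$ to joint inclusion — note $Z_0(e)>r$ is implied once we are on the event that $e\in\pi_0$, and the events $\{e\in\pi_0\}$, $\{e\in\pi_t\}$ are positively correlated so their intersection has comparable probability to $\PP(Z_0(e)>r,Z_t(e)>r)$.

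For the \textbf{upper bound}, from~\eqref{eq:DT_bound} we have $|D_e^xT_t|\le(\mu+x)\mathbf 1_{\{Z_t(e)>r\}}$ and similarly for $D_e^xT_0$, so by Cauchy–Schwarz inside the definition of $\Inf_e$,
$$
\Inf_e(T_0,T_t)=\int\E[D_e^xT_0\,D_e^xT_t]\,dF(x)\le\int(\mu+x)^2\,\PP(Z_0(e)>r,\ Z_t(e)>r)\,dF(x),
$$
wait — the indicator events depend on the configuration not on $x$, so more carefully $\E[D_e^xT_0 D_e^xT_t]\le\E[(\mu+x)^2\mathbf 1_{\{Z_0(e)>r\}}\mathbf 1_{\{Z_t(e)>r\}}]=(\mu+x)^2\PP(Z_0(e)>r,Z_t(e)>r)$, and integrating $dF(x)$ uses $\int x^2\,dF<\infty$ to bound the prefactor by a constant. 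It then remains to bound $\PP(Z_0(e)>r,Z_t(e)>r)$ by a constant times $\PP(e\in\pi_0\cap\pi_t)$. For this I would show that conditionally on $Z_t(e)>r$, the probability that $e$ actually lies on all time-$t$ geodesics is bounded below by a constant: given $Z_t(e)\ge r+1$, the edge $e$ is on every geodesic whenever its resampled weight $\omega_t(e)<Z_t(e)$ strictly, i.e. with conditional probability at least $F(r+1^-)-F(r)\ge$... hmm, actually at least $\PP(\tilde\omega<Z_t(e))\ge\PP(\tilde\omega=r)$ when $F(r)>0$, or I instead use $F(0)<p_c$ to guarantee $r=0$ carries positive mass only in the atomic case; in the continuous-at-$r$ case this step is not needed for Theorem~\ref{th:integer} since integer support with $F(r)=0$... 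I would split into the cases $F(0)>0$ and $F(0)=0$ and handle the forcing probability accordingly, using in both that the conditional law of $\omega_t(e)$ given $\mathcal F_e$ is just $F$.

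\medskip

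The step I expect to be the main obstacle is the careful identification of $\{Z_t(e)>r\}$ with $\{e\in\pi_t\}$ up to bounded multiplicative constants, and in particular controlling the joint event $\{Z_0(e)>r,\ Z_t(e)>r\}$ versus $\{e\in\pi_0\cap\pi_t\}$ — i.e. transferring between the "$e$ matters" event and the "$e$ is forced" event simultaneously at both times $0$ and $t$, which are coupled through~\eqref{passage_times}. The clean way through is to observe that conditionally on $\mathcal F_e$, the pair $(Z_0(e),Z_t(e))$ is determined, while $\omega_0(e)=\omega(e)$ and $\omega_t(e)$ (equal to $\omega(e)$ if $U(e)>t$, else $\omega'(e)$) is an explicit randomisation; then $e\in\pi_0$ iff $\omega(e)$ is below a threshold determined by $Z_0(e)$ (strictly, in the integer setting), and similarly for $\pi_t$, so the conditional probability of $\{e\in\pi_0\cap\pi_t\}$ given $\mathcal F_e$ is a product/coupling of two such threshold events whose probabilities are each bounded below by a constant on $\{Z_t(e)>r\}$ — and here the integer gap ensures the thresholds are at least $r+1>r$, so the relevant conditional probabilities do not degenerate. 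Assembling these conditional estimates and integrating over $\mathcal F_e$ yields the two-sided comparison, completing the proof of Proposition~\ref{prop:int_inf}.
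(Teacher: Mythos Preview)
Your overall architecture matches the paper's: bound $\Inf_e(T_0,T_t)$ above and below by constant multiples of $\PP(Z_0(e)>r,\,Z_t(e)>r)$, then compare that joint event to $\{e\in\pi_0\cap\pi_t\}$ using that $Z_0,Z_t$ are $\mathcal F_e$-measurable while $\omega_0(e),\omega_t(e)$ are not. Your upper bound is essentially the paper's argument verbatim, including the key trick of intersecting with an $\mathcal F_e$-independent event $B=\{\omega_0(e)=\omega_t(e)=r\}$ to force $e\in\pi_0\cap\pi_t$.

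There are, however, two genuine problems in the lower-bound half. First, the claim ``$Z_0(e)\ge Z_t(e)$ always'' is false: $Z_t(e)$ is a deterministic function of $\omega_t$ restricted to $\Ec\setminus\{e\}$, and resampling can move it in either direction. Monotonicity of co-influences (Lemma~\ref{lma:inf_inf}) is a statement about expectations, not a pathwise ordering. You partially recover by later writing the joint event $\{Z_0>r\}\cap\{Z_t>r\}$, but the argument as written leans on the false monotonicity. Second, your case split on whether $F(r)>0$ is unnecessary and reflects a confusion: for $F$ supported on $\{0,1,2,\ldots\}$ the infimum $r$ of the support is an integer and $F(r)=\PP(\omega(e)=r)>0$ automatically, so there is no ``$F(r)=0$'' case here.

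The paper's lower bound is also considerably cleaner than the covariance route you sketch. Rather than invoking~\eqref{eq:inf_cov} and arguing positive correlation of two monotone functions of $\tilde\omega$, the paper simply uses that each term $\E[D_e^xT_0\,D_e^xT_t]=Q_t(D_e^xT)$ is non-negative (Lemma~\ref{lma:mono_inf}), so one may restrict the defining integral to the single atom $x=r$. Then the integer gap gives directly $D_e^rT_s\le -(1-F(r))$ on $\{Z_s(e)>r\}$ for $s\in\{0,t\}$, whence $\Inf_e(T_0,T_t)\ge F(r)(1-F(r))^2\,\PP(Z_0>r,\,Z_t>r)$. Finally the \emph{easy} inclusion $\{e\in\pi_0\cap\pi_t\}\subseteq\{Z_0>r,\,Z_t>r\}$ finishes the lower bound --- no positive-correlation step is needed at all. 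Your instinct that the integer gap is the crux is correct; the execution can be made much shorter by dropping the covariance formula and the erroneous pathwise comparison.
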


\begin{proof}
Since $F$ is supported on the integers, then the infimum $r$ of the support is necessarily an integer and $F(r)=\PP(\omega_e=r)\in(0,1)$. Moreover, since also $T_t$ is integer-valued, it is, crucially, the case that $Z_t(e)$ is integer-valued too, for all $t\in[0,1]$, $v\in\Z^d$ and $e\in\mathcal{E}$.
It follows that, on the event $\{Z_t(e)>r\}$, we have
$$
\int(Z_t(e)-x)_+\,dF(x)\le(Z_t(e)-r)F(r)+(Z_t(e)-r-1)(1-F(r))=(Z_t(e)-r)-(1-F(r)).
$$
Hence, by~\eqref{eq:DZ}, we obtain that
\begin{equation}\label{eq:DT_int_bound}
D_e^rT_t=\int(Z_t(e)-x)_+\,dF(x)-(Z_t(e)-r)\le-(1-F(r))\mathbf{1}_{\{Z_t(e)>r\}},
\end{equation}
since also $D_e^rT_t=0$ on the event $\{Z_t(e)=r\}$.

By Lemma~\ref{lma:mono_inf}, we know that $\E[D_e^xT_0D_e^xT_t]=Q_t(D_e^xT)$ is non-negative for all $x$, which gives the lower bound
$$
\Inf_e(T_0,T_t)=\int\E[D_e^xT_0D_e^xT_t]\,dF(x)\ge F(r)\E[D_e^rT_0D_e^rT_t].
$$
Together with~\eqref{eq:DT_int_bound}, this gives that
$$
\Inf_e(T_0,T_t)\ge F(r)(1-F(r))^2\,\PP\big(Z_0(e)>r,Z_t(e)>r\big).
$$
By definition of $\pi_t$ we have $\{e\in\pi_t\}\subseteq\{Z_t(e)>r\}$, and may thus conclude that
$$
\Inf_e(T_0,T_t)\ge F(r)(1-F(r))^2\,\PP\big(e\in\pi_0\cap\pi_t\big).
$$

To obtain a matching upper bound, we recall~\eqref{eq:DT_bound}, which gives
\begin{equation}\label{eq:inf_int_upper}
\Inf_e(T_0,T_t)\le\int(\mu+x)^2\,dF(x)\cdot\PP(Z_0(e)>r,Z_t(e)>r),
\end{equation}
which is finite since $F$ has finite second moment. Let $A=\{Z_0(e)>r,Z_t(e)>r\}$ and $B=\{\omega_0(e)=\omega_t(e)=r\}$, and note that $A\cap B\subseteq\{e\in\pi_0\cap\pi_t\}$. Moreover, $A$ is $\mathcal{F}_e$-measurable, and hence independent of $B$. Therefore,
$$
\PP(A)=\frac{\PP(A\cap B)}{\PP(B)}\le\frac{\PP(e\in\pi_0\cap\pi_t)}{F(r)^2},
$$
and hence
$$
\Inf_e(T_0,T_t)\le\frac{1}{F(r)^2}\int(\mu+x)^2\,dF(x)\cdot\PP(e\in\pi_0\cap\pi_t),
$$
as required.
\end{proof}

Theorem~\ref{th:integer} is now an easy consequence.

\begin{proof}[Proof of Theorem~\ref{th:integer}]
According to Proposition~\ref{prop:transition} it will suffice to verify that~\eqref{eq:assumption} holds uniformly in $v\in\Z^d$ and $t\in[0,1]$. By Proposition~\ref{prop:cov_infinite} it will suffice to show that
$$
\sum_{e\in\mathcal{E}}\Inf_e(T_0,T_t)\asymp\E[|\pi_0\cap\pi_t|]\quad\text{for }v\in\Z^d\text{ and }t\in[0,1].
$$
However, this is a consequence of Proposition~\ref{prop:int_inf} and~\eqref{eq:overlap_id}.

Assuming~\eqref{eq:var_cond} we have that~\eqref{eq:sublin} holds. For $t\gg1/\log|v|$ we obtain, in particular, that 
$$
\E[|\pi_0\cap\pi_t|]\le C\frac1t\Var(T)\le C\frac{|v|}{t\log|v|}=o(|v|),
$$
as required.
\end{proof}

\section{General edge weight distributions}\label{sec:cont}

We now proceed with the analysis in the more general setting, and prove Theorem~\ref{th:cont}. We shall bound the co-influences from below and above separately, which together amount to a version of~\eqref{eq:inf_asymp}, and use these bounds to prove Theorem~\ref{th:cont} at the end.

Our bounds on the co-influences will be the result of a more detailed analysis of the function $D_e^xT_t$ depicted in Figure~\ref{fig:DTt_YZH}. The additional difficulty comes from adequately estimating the co-influence when $Z_t(e)>r$ is small, and the function $D_e^xT_t$ close to zero, which cannot happen for integer-valued weight distributions.
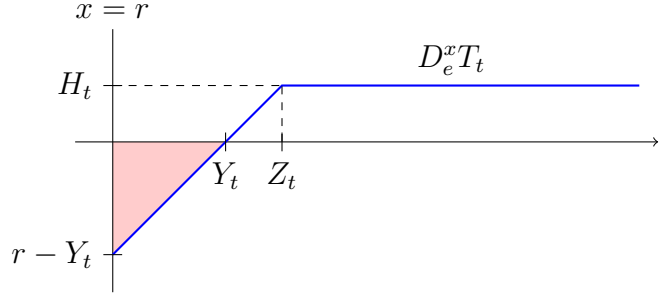
\begin{figure}[htbp]
\begin{center}
\begin{tikzpicture}[scale=.5]
\fill[fill=red!20!white] (0,0) -- (3,0) -- (0,-3) -- cycle;
\draw (0,-4) -- (0,3);
\draw[->] (-1,0) -- (14.5,0);
\draw[draw=blue,thick] (0,-3) -- (4.5,1.5) -- (14,1.5);
\draw (4.5,-.25) -- (4.5,.25);
\draw (3,-.25) -- (3,.25);
\draw (-.25,-3) -- (.25,-3);
\draw (-.25,1.5) -- (.25,1.5);
\draw[dashed] (4.5,0) -- (4.5,1.5);
\draw[dashed] (0,1.5) -- (4.5,1.5);
\draw (4.5,-.25) node[anchor=north] {$Z_t$};
\draw (9,1.5) node[anchor=south] {$D_e^xT_t$};
\draw (0,3) node[anchor=south] {$x=r$};
\draw (-.25,-3) node[anchor=east] {$r-Y_t$};
\draw (-.25,1.5) node[anchor=east] {$H_t$};
\draw (3,-.25) node[anchor=north] {$Y_t$};
\end{tikzpicture}
\end{center}
\caption{\small Illustration of the variables $Z_t=Z_t(e)$, $Y_t=Y_t(e)$ and $H_t=H_t(e)$.}
\label{fig:DTt_YZH}
\end{figure}
In addition to $Z_t=Z_t(e)$, we identify two other variables associated with the function $D_e^xT_t$, which are illustrated in Figure~\ref{fig:DTt_YZH}. Since $D_e^xT_t$ is monotone and integrates to zero, there is a first point at which it intersects the real axis, and remains non-negative after. We denote this point by $Y_t=Y_t(e)$, i.e.\
\begin{equation*}
Y_t(e):=\min\{x\ge r:D_e^xT_t\ge0\}.
\end{equation*}
Of course, $r\le Y_t(e)\le Z_t(e)$ and $Y_t(e)=Z_t(e)=r$ if $D_e^xT_t$ is constant (and hence equal to zero). In addition, we let $H_t=H_t(e)$ denote the height of the flat segment of $D_e^xT_t$, which is attained for $x\ge Z_t(e)$, i.e.\
$$
H_t(e):=D_e^{Z_t(e)}T_t.
$$
Recall that $Z_t(e)$ is finite, so that $H_t(e)$ is well-defined. However, we remark that $Z_t(e)$ is not necessarily in the support of $F$, and the value $H_t(e)$ is not necessarily attained for $x$ in the support of $F$.

When it is clear from the context which edge $e$ is referred to, we suppress the dependence on $e$ and simply write $Y_t$, $Z_t$ and $H_t$ for compactness.

\subsection{Co-influence lower bound}

We first establish the following lower bound on the co-influences.

\begin{prop}\label{prop:inf_lower_bound}
Suppose that $F$ has finite second moment and that $F(r)=0$. For every $\vep>0$ there exists $C(\vep)>0$ such that for all $e\in\Ec$ and $t\in[0,1]$ we have
$$
\Inf_e(T_0,T_t)\ge C(\vep)\big[\PP(e\in\pi_0\cap\pi_t)-2\PP\big(\{e\in\pi_0\}\cap\{\omega_0(e)\le r+\vep\}\big)\big].
$$
\end{prop}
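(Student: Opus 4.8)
The plan is to isolate, within the integral $\Inf_e(T_0,T_t)=\int Q_t(D_e^xT)\,dF(x)$, a range of $x$ on which the integrand is bounded below by a constant times $\mathbf{1}_{\{e\in\pi_0\cap\pi_t\}}$, at the cost of an error term controlled by the probability in the statement. Since $F(r)=0$ and $F$ is a fixed distribution, for any $\vep>0$ there is a constant $\delta=\delta(\vep)>0$ with $F(r+\vep)-F(r+\vep/2)\ge\delta$; I would restrict the integral to $x\in[r+\vep/2,r+\vep]$, which carries at least $F$-mass $\delta$. By non-negativity of $Q_t(D_e^xT)$ (Lemma~\ref{lma:mono_inf}), throwing away the rest only decreases the integral, so $\Inf_e(T_0,T_t)\ge\int_{r+\vep/2}^{r+\vep}\E[D_e^xT_0\,D_e^xT_t]\,dF(x)$.

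Next I would produce a pointwise lower bound on $D_e^xT_t$ for $x$ in this window, on a favourable event. From the representation~\eqref{eq:DZ}, $D_e^xT_t=\int(Z_t-y)_+\,dF(y)-(Z_t-x)_+$, and on the event $\{e\in\pi_t\}$ we have $Z_t>r$ (in fact $Z_t\ge\omega_t(e)$, roughly). The difficulty the authors flag is precisely that $Z_t$ may be only slightly larger than $r$, making $D_e^xT_t$ tiny. So I would split according to whether $Z_t$ is large or small relative to $r+\vep$. On $\{e\in\pi_t\}\cap\{Z_t\ge r+\vep\}$, for $x\le r+\vep\le Z_t$ one has $(Z_t-x)_+=Z_t-x\ge Z_t-(r+\vep)$, and a short computation using $F(r+\vep)=F(r)=0$-type estimates — more precisely $\int(Z_t-y)_+\,dF(y)\le Z_t-r-(\text{something})$ is the wrong direction, so instead I compare: $D_e^{r+\vep/2}T_t - D_e^{r+\vep}T_t = (Z_t-r-\vep/2)_+-(Z_t-r-\vep)_+$, which is nonnegative, and on $\{Z_t\ge r+\vep\}$ equals $\vep/2$. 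Combined with $D_e^{r+\vep}T_t\ge0$ (by definition of $Y_t$, valid once $x\ge Y_t$; on the bad set $Y_t>r+\vep$ I absorb into the error term), this yields $D_e^xT_t\ge c(\vep)>0$ uniformly on this event for $x$ near the lower end of the window. Doing the same for $t=0$, the product $D_e^xT_0 D_e^xT_t\ge c(\vep)^2$ on $\{e\in\pi_0\cap\pi_t\}\cap\{Z_0\ge r+\vep,Z_t\ge r+\vep\}\cap\{Y_0\le r+\vep, Y_t\le r+\vep\}$.

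Then I would reconcile this with the target. The event $\{e\in\pi_0\}$ essentially forces $\omega_0(e)\le Z_0(e)$, and the complementary bad event where $Z_0<r+\vep$ or $Y_0>r+\vep$ while $e\in\pi_0$ is contained (up to the structure of $D_e^xT$, which is linear of slope one below $Z_0$ and the flat height $H_0$) in $\{e\in\pi_0\}\cap\{\omega_0(e)\le r+\vep\}$ — here I use that $e\in\pi_0$ together with $Z_0$ small means $\omega_0(e)$ must itself be small, since on $\{e\in\pi_0\}$ we have $\omega_0(e)\le Z_0$ essentially, hence $\omega_0(e)<r+\vep$; likewise $Y_0\le Z_0$, so $Y_0>r+\vep$ already forces $Z_0>r+\vep$, handled above. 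This is why the error term takes the form $\PP(\{e\in\pi_0\}\cap\{\omega_0(e)\le r+\vep\})$, and the factor $2$ covers both the $t=0$ and time-$t$ bad contributions (by symmetry $\{e\in\pi_t\}\cap\{\omega_t(e)\le r+\vep\}$ has the same probability as the $t=0$ version, since $\omega_t\overset{d}{=}\omega_0$ and $\pi_t$ is defined from $\omega_t$). Putting the pieces together:
$$
\Inf_e(T_0,T_t)\ge c(\vep)^2\,\delta(\vep)\,\PP\big(\{e\in\pi_0\cap\pi_t\}\setminus(\text{bad}_0\cup\text{bad}_t)\big)\ge C(\vep)\big[\PP(e\in\pi_0\cap\pi_t)-2\PP(\{e\in\pi_0\}\cap\{\omega_0(e)\le r+\vep\})\big].
$$

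\textbf{Main obstacle.} The delicate point is the third paragraph: cleanly showing that on $\{e\in\pi_0\}$ the smallness of $Z_0$ (or $Y_0$) is \emph{implied} by the smallness of $\omega_0(e)$, so that the bad event is genuinely contained in $\{e\in\pi_0\}\cap\{\omega_0(e)\le r+\vep\}$. This requires a careful argument relating membership of $e$ in the (intersection-of-geodesics) set $\pi_t$ to the inequality $\omega_t(e)<Z_t(e)$, using the piecewise-linear structure of $x\mapsto D_e^xT_t$ and the definition~\eqref{eq:pi_def}; the atomic-$F$ subtleties in the definition of $\pi_t$ need to be handled, but since $F(r)=0$ we at least know there is no atom at the bottom of the support, which is what makes the clean containment possible.
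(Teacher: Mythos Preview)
Your proposal contains a genuine gap. First, there is a sign error: from $D_e^xT_t=C-(Z_t-x)_+$ one gets $D_e^{r+\vep/2}T_t-D_e^{r+\vep}T_t=-(Z_t-r-\vep/2)_++(Z_t-r-\vep)_+$, which on $\{Z_t\ge r+\vep\}$ equals $-\vep/2$, not $+\vep/2$; the function $x\mapsto D_e^xT_t$ is non-decreasing, so values at the lower end of your window are \emph{smaller}, not larger. More seriously, your plan to secure $D_e^xT_t\ge c(\vep)>0$ for $x\in[r+\vep/2,r+\vep]$ on the favourable event $\{Z_t\ge r+\vep\}$ cannot work: the paper's own Lemma~\ref{le:delta_bd} shows (for $F(r+\vep)\le 1/2$) that $Z_t\ge r+\vep$ forces $Y_t\ge r+\vep/2$, so $D_e^xT_t$ is \emph{negative} near the lower end of your window, and at the upper end it equals $r+\vep-Y_t$, which may be arbitrarily small (or zero) since $Y_t$ can lie anywhere in $[r+\vep/2,r+\vep]$. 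There is thus no uniform positive lower bound on the integrand over your window. Finally, your proposed absorption of the bad set $\{Y_t>r+\vep\}$ into $\{e\in\pi_t\}\cap\{\omega_t(e)\le r+\vep\}$ is invalid: large $Y_t$ forces large $Z_t$, and then $\omega_t(e)$ can be anywhere in $(r,Z_t)$, in particular well above $r+\vep$, while still having $e\in\pi_t$.

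The paper avoids all of this by working on the \emph{negative} side instead. Lemma~\ref{le:positive_part} shows, via a monotonicity argument using~\eqref{eq:vv0}, that $\int D_e^xT_0\,D_e^xT_t\,dF(x)\ge\int (D_e^xT_0)_-(D_e^xT_t)_-\,dF(x)$. The negative parts have clean structure near $x=r$: both are at least $\Delta/2$ on $[r,r+\Delta/2]$ where $\Delta=\min\{Y_0-r,Y_t-r\}$, giving $\Inf_e(T_0,T_t)\ge(\Delta/2)^2F(r+\Delta/2)$. Lemma~\ref{le:delta_bd} then converts $\{Z_0>r+\vep,\,Z_t>r+\vep\}$ into $\Delta\ge\vep/2$, and only the containment $\{e\in\pi_t\}\cap\{Z_t\le r+\vep\}\subseteq\{e\in\pi_t\}\cap\{\omega_t(e)\le r+\vep\}$ (which \emph{is} valid, since $e\in\pi_t$ implies $\omega_t(e)\le Z_t(e)$) is needed for the error term. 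Your handling of the $\{Z_t\le r+\vep\}$ bad set is correct; it is the attempt to also control $\{Y_t>r+\vep\}$ that fails, and the paper's approach simply does not need it.
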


We will derive the lower bound in three steps. We first show that it will suffice to estimate the contribution to the co-influences that comes from the shaded region below the axis in Figure~\ref{fig:DTt_YZH}. Second, we show that if $Z_t(e)$ is not too small, then (the contribution from) the shaded region is also not too small. This will lead to a lower bound of the form
$$
\Inf_e(T_0,T_t)\ge C(\vep)\PP\big(\{Z_0(e)>r+\vep\}\cap \{Z_t(e)>r+\vep\}\big).
$$
Finally, we use that if $e\in\pi_t$, then either $Z_t(e)>r+\vep$, or $e\in\pi_t$ and $\omega_t(e)\le r+\vep$.

In the first step we use the monotonicity of $D_e^xT_t$ to obtain the following lemma.

\begin{lemma}\label{le:positive_part}
For every $e\in\Ec$ and $t\in[0,1]$, we have
$$
\int\Do\Dt\,dF(x)\geq \int(\Do)_-(\Dt)_-\,dF(x).
$$
\end{lemma}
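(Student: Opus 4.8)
The claim is purely pointwise in the configuration $\mathcal{F}_e$ (equivalently, in the pair of thresholds $Z_0(e),Z_t(e)$), so I would first condition on $\mathcal{F}_e$ and reduce to a deterministic statement about a single generic $F$-distributed variable $\tilde\omega$. Recall from~\eqref{eq:DZ} that, given $\mathcal{F}_e$, the map $x\mapsto D_e^xT_t$ has the explicit form $c_t-(Z_t-x)_+$ with $c_t=\int(Z_t-y)_+\,dF(y)\ge0$; in particular it is non-decreasing in $x$, it is $\le0$ for $x\le Y_t$ and $\ge0$ for $x\ge Y_t$, where $Y_t\le Z_t$ is the crossing point. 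The key structural fact I want to exploit is the \emph{monotonicity} of both $x\mapsto D_e^xT_0$ and $x\mapsto D_e^xT_t$.

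\textbf{Step 1: splitting the integrand by sign.} Write $D_e^xT_0\,D_e^xT_t=(D_e^xT_0)_+(D_e^xT_t)_+ + (D_e^xT_0)_-(D_e^xT_t)_- - (D_e^xT_0)_+(D_e^xT_t)_- - (D_e^xT_0)_-(D_e^xT_t)_+$, valid because for each of the two functions its positive and negative parts have disjoint support in $x$. The first two terms are non-negative, so it suffices to show the two cross terms integrate to something non-positive, i.e.\ $\int (D_e^xT_0)_+(D_e^xT_t)_-\,dF(x)=0$ and likewise with $0$ and $t$ swapped. Here is where monotonicity enters: $(D_e^xT_0)_+>0$ forces $x>Y_0$, while $(D_e^xT_t)_->0$ forces $x<Y_t$; if these two regions overlapped on a set of positive $F$-measure we would need $Y_0<Y_t$, but then on that overlap we would also have... — and this is the point that needs care.

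\textbf{Step 2: ruling out the overlap.} The clean way is to use that the pair $(D_e^xT_0,D_e^xT_t)$ has non-negative "co-influence density" in the conditional sense: by Lemma~\ref{lma:monotonicity} (applied to $f=D_e^xT$ as a function of the remaining coordinates, exactly as in the proof of Lemma~\ref{lma:influence}), $Q_t(D_e^xf)=\E[D_e^xT_0D_e^xT_t]\ge0$ for $F$-a.e.\ $x$. That gives non-negativity after integrating out $\mathcal{F}_e$, but the present lemma is a \emph{conditional, pathwise} inequality. So instead I would argue directly: fix $\mathcal{F}_e$; then $T_0$ and $T_t$ share the same neighbouring configuration except that the edge-$e$ weight has possibly been resampled — but $D_e^xT_t$ as a function of $x$ depends on $\omega_t$ only through the other coordinates $\omega_t(e')$, $e'\ne e$, and since $T$ is coordinatewise monotone, $Z_0(e)$ and $Z_t(e)$ are comparable in the appropriate coupling: under the coupling of Lemma~\ref{lma:monotonicity}, $\omega_0(e')\le\omega_t(e')$ holds in a monotone-coupled sense, hence $Z_t(e)\le Z_0(e)$ and therefore $Y_t\le Y_0$ in that coupling. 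Consequently $(D_e^xT_0)_+(D_e^xT_t)_-$ is supported where $x>Y_0\ge Y_t>x$, an empty set, so that cross term vanishes; the other cross term $(D_e^xT_0)_-(D_e^xT_t)_+$ need not vanish, but it is a term we are \emph{dropping from a lower bound}, so it only helps — wait, it enters with a minus sign, so I actually need it to vanish too. The symmetric argument does not apply since $Y_t\le Y_0$.

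\textbf{Resolution / main obstacle.} The genuine content, and the step I expect to be the crux, is therefore showing $\int(D_e^xT_0)_-(D_e^xT_t)_+\,dF(x)\le 0$, i.e.\ $=0$. This should again follow from $Y_t\le Y_0$ \emph{together with} the precise relation between the two flat heights: on the region $Y_t\le x\le Y_0$ where $D_e^xT_0\le 0$ and $D_e^xT_t\ge0$ simultaneously, one must show this region has $F$-measure zero, or that the product vanishes there. In fact $Y_0$ is defined as the \emph{first} $x$ with $D_e^xT_0\ge0$, and by~\eqref{eq:DZ} $D_e^xT_0$ is continuous and strictly increasing on $[r,Z_0]$, so $D_e^xT_0<0$ strictly for $x<Y_0$ — but this does not make the product zero. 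The actual mechanism must be that in the monotone coupling the \emph{event} "$x$ lies strictly between $Y_t$ and $Y_0$" together with the law of $\tilde\omega$ contributes nothing after we also integrate over the coupling randomness; more honestly, I suspect the intended proof keeps \emph{both} cross terms, writes $\int D_e^xT_0D_e^xT_t\,dF(x)=\int(D_e^xT_0)_-(D_e^xT_t)_-\,dF(x)+\big(\text{non-negative terms}\big)-\big(\text{cross terms}\big)$ and then shows the cross terms are dominated — or, most likely, the paper simply observes that one cross term is zero by the $Y_t\le Y_0$ support argument and for the other invokes that $(D_e^xT_0)_-$ is bounded by $\mu$ while its integral against the part where $D_e^xT_t\ge 0$ is controlled by $\int D_e^xT_t\,dF=0$ minus a non-negative piece. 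I would present Step 1 cleanly and then carry out the support argument of Step 2 for the one cross term that vanishes, handling the second by the monotone coupling $Y_t \le Y_0$ which forces it to vanish as well once one notes that resampling the \emph{other} coordinates in the direction that increases weights can only push the crossing point down, so $\{(D_e^xT_0)_-(D_e^xT_t)_+>0\}\subseteq\{Y_t\le x< Y_0\}$ has, in the conditional law, the same issue — the honest main obstacle is precisely verifying that this last set is $F$-null or product-null, which is where the hypothesis $F(r)=0$ and the explicit piecewise-linear form~\eqref{eq:DZ} must be used.
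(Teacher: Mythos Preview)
Your decomposition into positive and negative parts is fine, and you correctly observe that the inequality is a pointwise (deterministic, conditional on $\mathcal{F}_e$) statement about two non-decreasing functions of $x$. But from there the argument goes off the rails in two ways.

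First, there is no monotone coupling forcing $Y_t\le Y_0$. Lemma~\ref{lma:monotonicity} is about conditional independence after resampling, not stochastic domination; nothing prevents $Y_0<Y_t$ for some realisations and $Y_t<Y_0$ for others. The correct move is the trivial one: by symmetry of the statement in $0$ and $t$, assume without loss of generality that $Y_0\le Y_t$. Under this assumption the cross term $\int (D_e^xT_0)_-(D_e^xT_t)_+\,dF(x)$ vanishes (its support is $\{x<Y_0\}\cap\{x>Y_t\}=\varnothing$), while the other cross term, supported on $(Y_0,Y_t)$, is in general \emph{strictly positive} and cannot be argued away.

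Second, and this is the real gap, you never identify the mechanism that controls the surviving cross term. It is not $F$-nullness, not the hypothesis $F(r)=0$, and not the explicit piecewise-linear form beyond monotonicity. The paper's argument is this: since $D_e^xT_t$ integrates to zero over $[r,\infty)$, one has $-\int_{[Y_0,Y_t]}D_e^xT_t\,dF\le -\int_{[r,Y_t]}D_e^xT_t\,dF=\int_{[Y_t,\infty)}D_e^xT_t\,dF$. Now use that $D_e^xT_0$ is non-negative and \emph{non-decreasing} on $[Y_0,\infty)$: multiplying the integrand on the left by $D_e^xT_0\le D_e^{Y_t}T_0$ and on the right by $D_e^xT_0\ge D_e^{Y_t}T_0$ preserves the inequality, yielding $-\int_{[Y_0,Y_t]}D_e^xT_0D_e^xT_t\,dF\le\int_{[Y_t,\infty)}D_e^xT_0D_e^xT_t\,dF$. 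In your language, this says exactly that the bad cross term is dominated by the $(+,+)$ term, which is what you needed. The argument uses only monotonicity and the balance equation~\eqref{eq:D_balance}; no further hypotheses on $F$ are required.
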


\begin{proof}
Let $e\in\Ec$ and $t\in[0,1]$ be fixed. By symmetry, we may without loss of generality assume that $Y_0\leq Y_t$. By~\eqref{eq:D_balance}, it holds that $\Dt$ integrates to zero, and hence, since $D_e^xT_t$ is negative below $Y_t$ and positive above, that
\begin{equation}\label{eq:vv0}
-\int_{[r,Y_t]}\Dt\,dF(x)=\int_{[Y_t,\infty)}\Dt\,dF(x).
\end{equation}
(At $Y_t$ the integrand is zero, so it does not matter if we may include the point $Y_t$ in the interval of integration or not.) Since $Y_0\le Y_t$ by assumption and $D_e^xT_t$ is negative below $Y_t$, it follows that
$$
-\int_{[Y_0,Y_t]}\Dt\,dF(x)\leq \int_{[Y_t,\infty)}\Dt\,dF(x).
$$
Using the fact that $\Do$ is non-decreasing and non-negative on $[Y_0,\infty)$, we obtain that
$$
-\int_{[Y_0,Y_t]}\Do\Dt\,dF(x)\leq \int_{[Y_t,\infty)} \Do\Dt\,dF(x).
$$
Since both $\Do$ and $\Dt$ are negative on $[r,Y_0)$ and $\Do$ is non-negative on $[Y_0,\infty)$, we conclude from the above that
$$
\int_{[r,\infty)} \Do\Dt\,dF(x)\geq \int_{[r,Y_0)}\Do\Dt\,dF(x)=\int_{[r,\infty)} (\Do)_-(\Dt)_-\,dF(x),
$$
as required.
\end{proof}

In the second step we show that, if $Y_t(e)$ is close to $r$, then $Z_t(e)$ is close to $r$ too.

\begin{lemma}\label{le:delta_bd}
For every $e\in\Ec$, $t\in[0,1]$ and $\vep>0$ such that $F(r+\vep)\leq 1/2$, we have that if $Z_t(e)\ge r+\vep$, then $Y_t(e)\ge r+ \vep/2$.
\end{lemma}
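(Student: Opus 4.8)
\textbf{Proof plan for Lemma~\ref{le:delta_bd}.}

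The idea is to argue by contrapositive: assume $Y_t(e) < r + \vep/2$ and show that this forces $Z_t(e) < r + \vep$. Recall from~\eqref{eq:DZ} that the function $x \mapsto D_e^x T_t$ is of the form $\int (Z_t - y)_+\,dF(y) - (Z_t - x)_+$, so it is piecewise linear: it increases with slope one on $[r, Z_t]$ and is constant equal to $H_t = D_e^{Z_t}T_t$ on $[Z_t, \infty)$. The point $Y_t$ is precisely where this function crosses zero from below, so on the interval $[Y_t, Z_t]$ the function is non-negative and increases with slope one (as long as $Y_t \le Z_t$), reaching the value $H_t = Z_t - Y_t$ at $x = Z_t$. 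Consequently we have the clean identity $H_t(e) = Z_t(e) - Y_t(e)$ whenever $D_e^x T_t$ is not identically zero (and trivially when it is).

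Now I want to bound $H_t$ from above in terms of $Y_t$. Since $D_e^x T_t$ integrates to zero against $F$ and is negative on $[r, Y_t)$, non-negative on $[Y_t, \infty)$, and bounded above by $H_t$ everywhere, we get
$$
\int_{[r, Y_t)} \big(-D_e^x T_t\big)\,dF(x) = \int_{[Y_t, \infty)} D_e^x T_t\,dF(x) \le H_t \cdot F\big([Y_t,\infty)\big) = H_t\big(1 - F(Y_t^-)\big).
$$
On the other hand, on $[r, Y_t)$ the function $-D_e^x T_t$ is bounded below by $(Y_t - x)$ is not quite what I want; instead I use that $-D_e^x T_t \ge H_t - (Z_t - r) \cdot \mathbf{1}$ — let me reorganize. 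The cleanest route: on $[r,Y_t)$ the slope-one increasing function $D_e^x T_t$ satisfies $D_e^x T_t = D_e^r T_t + (x - r)$, and $D_e^r T_t = -Y_t + r + (\text{value of } D_e^x T_t \text{ at } Y_t) = r - Y_t$ since $D_e^{Y_t}T_t = 0$; hence $-D_e^x T_t = (Y_t - r) - (x - r) = (Y_t - x)$ on $[r, Y_t)$. Wait — this is only valid if $Y_t \le Z_t$, which holds. So
$$
\int_{[r,Y_t)} (Y_t - x)\,dF(x) = \int_{[Y_t,\infty)} D_e^x T_t\,dF(x).
$$
If $Y_t < r + \vep/2$, the left side is at most $(Y_t - r)\cdot F(Y_t^-) \le (\vep/2) F(r+\vep)$, and the right side is at least $H_t \cdot \PP(\omega(e) \ge r+\vep)$ provided $Z_t \le r + \vep$... hmm, but I am trying to prove $Z_t < r + \vep$, so I should instead assume for contradiction that $Z_t \ge r+\vep$. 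Then on $[r+\vep, \infty) \supseteq [Z_t, \infty)$ is false; rather $[Z_t,\infty) \subseteq$ the region where $D_e^x T_t = H_t$, and $r + \vep \le Z_t$ means on $[r+\vep, Z_t]$ the function is still increasing but already $\ge D_e^{r+\vep}T_t \ge D_e^{r+\vep}T_t$. Since $Y_t < r+\vep/2 < r + \vep$, at $x = r+\vep$ we have $D_e^{r+\vep}T_t = r+\vep - Y_t > \vep/2$. Thus $D_e^x T_t > \vep/2$ for all $x \ge r + \vep$, giving the right side at least $(\vep/2)\PP(\omega(e) \ge r+\vep) = (\vep/2)(1 - F(r+\vep)) \ge \vep/4$ using $F(r+\vep) \le 1/2$. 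But the left side is at most $(\vep/2) F(r+\vep^-) \le (\vep/2) F(r+\vep) \le \vep/4$... this gives $\le$, not a strict contradiction. Replacing $\vep/2$ by $Y_t - r$ in the upper bound for the left side and noting the inequalities can be made strict (or arguing with $Y_t \le r + \vep/2$ throughout and tracking that $F(Y_t^-) < 1 - F(r+\vep)$ strictly when there is mass beyond $r+\vep$, which there must be if $Z_t \ge r+\vep > r$) closes the gap. I expect the main obstacle to be exactly this bookkeeping at the boundary: handling the degenerate case $D_e^x T_t \equiv 0$ separately (trivial), and being careful about whether $F$ has an atom at $Y_t$ or at $r+\vep$ so that the non-strict inequality $\int_{[r,Y_t)}(Y_t-x)\,dF \le (Y_t - r)F(Y_t^-)$ combined with the strict positivity of the mass $1 - F(r+\vep) \ge 1/2$ yields $Y_t - r \ge \vep/2$, i.e.\ $Y_t \ge r + \vep/2$, as claimed.
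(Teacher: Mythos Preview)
Your argument is correct and, once you arrive at the final formulation, it is essentially identical to the paper's proof: assume $Z_t\ge r+\vep$ and $Y_t<r+\vep/2$, use the balance equation to get $\int_{[Y_t,\infty)}D_e^xT_t\,dF(x)\le (Y_t-r)F(Y_t)$ on one side and $\ge \vep/4$ on the other (via $D_e^xT_t\ge\vep/2$ for $x\ge r+\vep$ and $1-F(r+\vep)\ge 1/2$), then combine with $F(Y_t)\le 1/2$ to force $Y_t-r\ge\vep/2$. The detours through $H_t=Z_t-Y_t$ and the initial attempt to bound $H_t$ are unnecessary but harmless; your worry about the boundary case dissolves exactly as you observe, by keeping $(Y_t-r)$ rather than replacing it by $\vep/2$.
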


\begin{proof}
Let $e\in\Ec$ and $t\in[0,1]$ be fixed. Fix $\vep>0$ such that $F(r+\vep)\le1/2$, and suppose that $Z_t-r\ge\vep$. By~\eqref{eq:vv0}, and since $-\Dt\le Y_t-r$ on $[r,Y_t]$, we have
\begin{equation}\label{lhs}
\int_{[Y_t,\infty)}\Dt\,dF(x)=-\int_{[r,Y_t]}\Dt\,dF(x)\le
(Y_t-r)F(Y_t).
\end{equation}
Now suppose that $Y_t<r+\vep/2$. We then have $D_e^{r+\vep/2}T_t\ge0$ and, since $Z_t\ge r+\vep$, hence that $D_e^xT_t\ge\vep/2$ on $[r+\vep,\infty)$. By choice of $\vep>0$, this gives
\begin{equation}\label{rhs}
\int_{[Y_t,\infty)} \Dt\,dF(x)\ge \frac{\vep}{2}\big(1-F(r+\vep)\big)\geq \frac{\vep}{4}.
\end{equation}
Moreover, if $Y_t<r+\vep/2$, then $F(Y_t)\le F(r+\vep)\le1/2$, so~\eqref{lhs} and~\eqref{rhs} yield that $Y_t-r\ge\vep/2$.
\end{proof}

With these lemmas at hand we prove Proposition~\ref{prop:inf_lower_bound}.

\begin{proof}[Proof of Proposition~\ref{prop:inf_lower_bound}]
Let $e\in\Ec$ and $t\in[0,1]$ be fixed.
It will suffice to consider $\vep>0$ such that $F(r+\vep)\le1/2$, so we fix $\vep$ accordingly.
By Lemma \ref{le:positive_part}, the influence can be lower bounded as
$$
\Inf_e(T_0,T_t)=\int\E[\Dt\Do]\,dF(x)\ge\int \E[(\Do)_-(\Dt)_-]\,dF(x).
$$
Let $\Delta:=\min\{Y_0(e)-r,Y_t(e)-r\}$, and note that
$$
\int(\Do)_-(\Dt)_-\,dF(x)\ge \Big(\frac{\Delta}{2}\Big)^2F(r+\Delta/2).
$$
Furthermore, by Lemma \ref{le:delta_bd}, we have on the event ${\{Z_0(e)>r+\vep\}\cap \{Z_t(e)>r+\vep\}}$ that $\Delta\geq \vep/2$, and so it follows that
\begin{equation}\label{eq:infZ_bound}
\Inf_e(T_0,T_t)\ge\Big(\frac{\vep}{4}\Big)^2F(r+\vep/4)\PP\big(\{Z_0(e)>r+\vep\}\cap \{Z_t(e)>r+\vep\}\big).
\end{equation}
Now observe that
\begin{align*}
\{e\in\pi_t\} & \subseteq \{Z_t(e)>r+\vep\}\cup\big[\{e\in\pi_t\}\cap \{Z_t(e)\leq r+\vep\}\big]\\
 & \subseteq \{Z_t(e)>r+\vep\}\cup\big[\{e\in\pi_t\}\cap\{\omega_t(e)\le r+\vep\}\big],
\end{align*}
so that
$$
\{Z_t(e)>r+\vep\}\supseteq \{e\in\pi_t\}\setminus \big[\{e\in\pi_t\}\cap\{\omega_t(e)\leq r+\vep\}\big].
$$
This means that we can bound
$$
\PP\big(\{Z_0(e)>r+\vep\}\cap \{Z_t(e)>r+\vep\}\big)\ge \PP(e\in\pi_0\cap \pi_t)-2\PP\big(\{e\in\pi_0\}\cap\{\omega_0(e)\leq r+\vep\}\big),
$$
which together with~\eqref{eq:infZ_bound} completes the proof.
\end{proof}

\subsection{Co-influence upper bound}

We proceed with the upper bound on the co-influences. The argument requires only~\eqref{eq:pc_moment}, and the next proposition is thus stated accordingly.

\begin{prop}\label{prop:cont_upper}
Suppose that $F$ satisfies~\eqref{eq:pc_moment}. There exists $C<\infty$ such that for every $e\in\Ec$ and $t\in[0,1)$ we have that
$$
\Inf_e(T_0,T_t)\le \frac{C}{1-t}\PP(e\in\pi_0\cap\pi_t).
$$
\end{prop}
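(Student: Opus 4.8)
The plan is to bound the co-influence by combining the pointwise bound~\eqref{eq:DT_bound} with a control on the probability that $Z_t(e)>r$, and to relate that probability to $\PP(e\in\pi_0\cap\pi_t)$ via a conditioning argument that exploits the $\mathcal{F}_e$-measurability of $Z_0(e)$ and $Z_t(e)$. Concretely, recall from~\eqref{eq:DT_bound} that $|D_e^xT_t|\le(\mu+x)\mathbf{1}_{\{Z_t(e)>r\}}$, and from Lemma~\ref{lma:mono_inf} that $\E[D_e^xT_0D_e^xT_t]\ge0$, so by Cauchy--Schwarz applied inside the expectation,
$$
\Inf_e(T_0,T_t)=\int\E[D_e^xT_0D_e^xT_t]\,dF(x)\le\int(\mu+x)^2\,dF(x)\cdot\PP\big(Z_0(e)>r,\,Z_t(e)>r\big),
$$
and the integral is finite by~\eqref{eq:pc_moment}. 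Thus everything reduces to showing $\PP(Z_0(e)>r,\,Z_t(e)>r)\le\frac{C'}{1-t}\PP(e\in\pi_0\cap\pi_t)$ for a universal constant $C'$.

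For this last step I would mimic the conditioning trick in the proof of Proposition~\ref{prop:int_inf}, but adapted to the continuous/general setting where one cannot assume $\omega_t(e)=r$ has positive probability. Let $A=\{Z_0(e)>r,\,Z_t(e)>r\}$, which is $\mathcal{F}_e$-measurable. The key observation is that if $Z_s(e)>r$ (for $s\in\{0,t\}$), then $e$ lies on \emph{every} geodesic in configuration $\omega_s$ whenever the weight $\omega_s(e)$ is small enough — specifically, there is an $\mathcal{F}_e$-measurable threshold (essentially $Y_s(e)$, or even just a value strictly less than $Z_s(e)$) below which $e\in\pi_s$. The difficulty is that this threshold is random and could be arbitrarily close to $r$, so one cannot directly lower-bound the conditional probability of $\{e\in\pi_s\}$ by a constant. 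This is where the factor $\frac{1}{1-t}$ enters: I would consider the event that $\omega(e)=\omega'(e)$ does \emph{not} get resampled — no, rather, the cleaner route is to condition on $\mathcal{F}_e$ and use that on $A$, both the weight $\omega_0(e)$ (i.e.\ $\omega(e)$) and $\omega_t(e)$ must fall below their respective thresholds for $e\in\pi_0\cap\pi_t$; since $Z_0(e)>r$ forces $D_e^xT_0$ to be non-constant, there is positive $F$-mass below $Z_0(e)$, and similarly for $t$. The $\frac{1}{1-t}$ arises because $\omega_t(e)=\omega(e)$ with probability $1-t$, so conditionally on $\mathcal{F}_e$ and on $\{U(e)>t\}$, the events $\{e\in\pi_0\}$ and $\{e\in\pi_t\}$ coincide (both configurations agree at $e$), giving
$$
\PP(e\in\pi_0\cap\pi_t\mid\mathcal{F}_e)\ge(1-t)\,\PP\big(\omega(e)<Z_0(e)\wedge Z_t(e)\mid\mathcal{F}_e\big)\cdot(\text{something bounded below on }A).
$$

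The main obstacle — and the part that needs the most care — is precisely quantifying, on the event $A$, a lower bound for $\PP(e\in\pi_0\cap\pi_t\mid\mathcal{F}_e)$ in terms of $\PP(A\mid\mathcal{F}_e)$ without losing a factor that depends on how close $Z_s(e)$ is to $r$. The resolution I anticipate: on $\{U(e)>t\}$ the two dynamical weights at $e$ agree, so $\{e\in\pi_0\}=\{e\in\pi_t\}$ on that event, and on $A$ this common event has conditional probability at least $F(Z_0(e)\wedge Z_t(e)) - F(r)$, which is strictly positive but not uniformly bounded below. To get a clean bound one instead argues the \emph{reverse} inequality: $\PP(A\mid\mathcal{F}_e)=\mathbf{1}_A$ since $A$ is $\mathcal{F}_e$-measurable, and $\{U(e)>t\}\cap\{\omega(e)\le Y_0(e)\wedge Y_t(e)\}\cap A\subseteq\{e\in\pi_0\cap\pi_t\}$ — wait, this still has the non-uniform threshold. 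The genuinely correct mechanism, which I would flesh out, is that $Z_0(e)>r$ already implies (by definition of $Z$ and monotonicity, cf.\ the discussion around Figure~\ref{fig:DTt}) that $D_e^xT_0>D_e^rT_0$ for $x$ slightly above $r$, hence $e$ belongs to the geodesic for $\omega(e)$ in a set of positive $F$-measure — but the upper bound on $\Inf_e$ already carries the factor $\int(\mu+x)^2dF$, so I only need $\PP(A)\le\frac{C'}{1-t}\PP(e\in\pi_0\cap\pi_t)$, and for \emph{this} it suffices that on $A$, conditionally on $\mathcal{F}_e$, the probability that $U(e)>t$ and $e\in\pi_0\cap\pi_t$ is at least $(1-t)$ times a quantity one can absorb — in fact the honest statement is $\PP(e\in\pi_0\cap\pi_t)\ge\PP(\{U(e)>t\}\cap A\cap\{e\in\pi_0\})=(1-t)\PP(A\cap\{e\in\pi_0\})$, and then one needs $\PP(A\cap\{e\in\pi_0\})\ge c\,\PP(A)$, which fails in general; so the extra input must be that the co-influence integrand itself vanishes unless $\omega(e)$ and $\omega'(e)$ are below $Z_0(e),Z_t(e)$ respectively, i.e.\ one should bound $\Inf_e$ more tightly than~\eqref{eq:DT_bound} by restricting the $x$-integration to $[r,Z_t(e)]$ and similarly, making the weight-below-threshold event automatic. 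I would therefore restructure: write $\Inf_e(T_0,T_t)\le\int_{[r,\infty)}\E\big[(\mu+x)^2\mathbf{1}_{\{Z_0(e)>r,Z_t(e)>r\}}\big]\,dF(x)$ and then partition on $\{U(e)>t\}$ to convert the $\mathcal{F}_e$-event $A$ into the geodesic-inclusion event at the cost of $\frac{1}{1-t}$, which is the content of Proposition~\ref{prop:cont_upper}.
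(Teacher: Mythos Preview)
Your approach has a genuine gap, one you yourself half-identify but do not resolve. The bound $\Inf_e(T_0,T_t)\le\int(\mu+x)^2\,dF(x)\cdot\PP(A)$ with $A=\{Z_0(e)>r,Z_t(e)>r\}$ is too crude to close the argument: when $F(r)=0$, one \emph{cannot} bound $\PP(A)$ by a constant multiple of $\PP(e\in\pi_0\cap\pi_t)$. Conditionally on $\mathcal{F}_e$, if $Z_0(e)=Z_t(e)=r+\vep$ for tiny $\vep>0$, then $\mathbf{1}_A=1$ while $\PP(e\in\pi_0\cap\pi_t\mid\mathcal{F}_e)\le F(r+\vep)$, which is arbitrarily small. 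No amount of conditioning on $\{U(e)>t\}$ repairs this; the factor $(1-t)^{-1}$ cannot compensate for a vanishing $F(r+\vep)$. You correctly diagnose that the threshold is ``not uniformly bounded below'', but your final restructuring reproduces the same inequality and hence the same obstruction.

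The missing idea is that in the regime where $Z_0(e)$ or $Z_t(e)$ is close to $r$, the \emph{co-influence itself} is small --- of order $F(Y_t(e))$ rather than $\int(\mu+x)^2\,dF$ --- and this smallness is exactly what matches $\PP(e\in\pi_0\cap\pi_t\mid\mathcal{F}_e)$. The paper fixes a threshold $\gamma>r$ with $F(\gamma)>0$ and splits according to whether both $Z_0(e),Z_t(e)$ exceed $\gamma$. Above $\gamma$ your argument works verbatim, with $F(\gamma)^{-2}$ replacing the nonexistent $F(r)^{-2}$. Below $\gamma$ one needs an additional lemma (Lemma~\ref{lma:H}): if $Z_t(e)\le\gamma$ then the height $H_t(e)$ of the flat part of $D_e^xT_t$ is at most $F(Y_t(e))$. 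This yields $\int D_e^xT_0D_e^xT_t\,dF(x)\lesssim F(\overline Y(e))$ with $\overline Y=\min\{Y_0,Y_t\}$, and it is \emph{this} quantity --- not $\mathbf{1}_A$ --- that is then compared to $\PP(e\in\pi_0\cap\pi_t\mid\mathcal{F}_e)$ via the $(1-t)^{-1}$ trick you anticipated. The two-regime decomposition and Lemma~\ref{lma:H} are the essential ingredients your sketch lacks.
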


In a first step we bound the function $D_e^xT_t$ when $Z_t(e)$ is small.

\begin{lemma}\label{lma:H} Suppose that $F(r)<1/2$. There exists $\gamma>r$ such that, for every $e\in\Ec$ and $t\in[0,1]$, we have that if $Z_t(e)\le \gamma$ then $H_t(e)\le F(Y_t(e))$.
\end{lemma}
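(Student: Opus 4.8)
The plan is to use the representation in~\eqref{eq:DZ} together with the two defining relations $H_t = D_e^{Z_t}T_t = \int(Z_t-y)_+\,dF(y)$ and $D_e^{Y_t}T_t = 0$, i.e.\ $\int(Z_t-y)_+\,dF(y) = (Z_t - Y_t)_+ = Z_t - Y_t$. Combining these gives the clean identity $H_t = Z_t - Y_t$. So the claim $H_t(e)\le F(Y_t(e))$ is equivalent to showing $Z_t - Y_t \le F(Y_t)$ whenever $Z_t \le \gamma$, for a suitable $\gamma > r$ to be chosen depending only on $F$.

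First I would fix $\gamma > r$ close enough to $r$ that $F(\gamma) < 1/2$; this is possible since $F(r) < 1/2$ and $F$ is right-continuous. Then on the event $\{Z_t \le \gamma\}$ we have $r \le Y_t \le Z_t \le \gamma$, so in particular $F(Y_t) \le F(\gamma) < 1/2$. The key inequality comes from expanding $H_t = \int(Z_t - y)_+\,dF(y)$ and bounding the integrand: the integration is effectively over $y \in [r, Z_t]$, on which $(Z_t - y)_+ \le Z_t - r$, so $H_t \le (Z_t - r)F(Z_t)$. That bound alone is not quite what is needed, so I would instead split more carefully: write $H_t = \int_{[r,Y_t]}(Z_t-y)\,dF(y) + \int_{(Y_t,Z_t]}(Z_t-y)\,dF(y)$, bound the second integral by $(Z_t - Y_t)(F(Z_t) - F(Y_t))$ and use $H_t = Z_t - Y_t$ to get $(Z_t-Y_t)(1 - F(Z_t) + F(Y_t)) \le \int_{[r,Y_t]}(Z_t - y)\,dF(y) \le (Z_t - r)F(Y_t)$, whence $Z_t - Y_t \le \frac{(Z_t-r)F(Y_t)}{1 - F(Z_t) + F(Y_t)} \le 2(Z_t - r)F(Y_t)$ using $F(Z_t) < 1/2$. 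Choosing $\gamma$ additionally so small that $\gamma - r \le 1/2$ then gives $H_t = Z_t - Y_t \le F(Y_t)$, as desired.

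The only genuine subtlety — the step I expect to need the most care — is handling the degenerate case where $D_e^xT_t$ is identically zero, so that $Y_t = Z_t = r$ and $H_t = 0$; here the inequality $H_t \le F(Y_t) = F(r)$ holds trivially (both sides are compared correctly since $F(r)\ge 0$), but I should make sure the algebraic manipulation above doesn't secretly divide by something that vanishes. A second small point is that $Z_t$ and $Y_t$ need not lie in the support of $F$, so all the monotonicity and the identity $H_t = Z_t - Y_t$ must be taken from~\eqref{eq:DZ} rather than from any pointwise evaluation of $D_e^xT_t$ at support points; this is already built into the setup in the excerpt. Everything else is the routine book-keeping of splitting a Stieltjes integral at $Y_t$ and $Z_t$ and using $F(Z_t)<1/2$.
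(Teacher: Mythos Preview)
Your proof is correct and follows essentially the same route as the paper: both arguments choose $\gamma\in(r,r+1/2)$ with $F(\gamma)\le 1/2$, and both exploit the balance between the negative and positive parts of $x\mapsto D_e^xT_t$ to squeeze $H_t$ between a multiple of itself and a multiple of $F(Y_t)$. The paper works directly with the balance equation $-\int_{[r,Y_t]}D_e^xT_t\,dF(x)=\int_{[Y_t,\infty)}D_e^xT_t\,dF(x)$, bounding the left side above by $(Y_t-r)F(Y_t)\le F(Y_t)/2$ and below by $H_t(1-F(Z_t))\ge H_t/2$; your version instead passes through the clean identity $H_t=Z_t-Y_t$ (which the paper does not state explicitly) and splits the integral representation $H_t=\int(Z_t-y)_+\,dF(y)$ at $Y_t$, arriving at the same conclusion via $(Z_t-Y_t)(1-F(Z_t)+F(Y_t))\le(Z_t-r)F(Y_t)$. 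The two computations are minor rearrangements of one another.
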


\begin{proof}
Let $e\in\Ec$ and $t\in[0,1]$ be fixed. Fix $\gamma\in(r,r+1/2)$ such that $F(\gamma)\le1/2$, and suppose that $Z_t\le \gamma$. Then
$$
-\int_{[r,Y_t]}\Dt\,dF(x)\le(Y_t-r)F(Y_t)\le F(Y_t)/2.
$$
Furthermore, the balance equation in~\eqref{eq:vv0} and the assumption that $Z_t\le\gamma$ imply that
$$
-\int_{[r,Y_t]}\Dt\,dF(x)\ge \int_{(Z_t,\infty)} \Dt\,dF(x) \ge H_t(1-F(Z_t))\ge H_t/2.
$$
(The mid-inequality is an equality unless $F$ has an atom at $Z_t$.) The two equations give $H_t\le F(Y_t)$, as required.
\end{proof}

\begin{proof}[Proof of Proposition \ref{prop:cont_upper}]
If $F(r)>0$, then the argument in the integer-valued setting applies. We may therefore assume that $F(r)=0$.

Fix $e\in\Ec$ and $t\in[0,1)$. Also fix $\gamma>r$ as in Lemma~\ref{lma:H}. We shall bound the co-influence separately depending on whether both $Z_0$ and $Z_t$ exceed $\gamma$ or not, by dividing the co-influence $\Inf_e(T_0,T_t)$ into the following sum
\begin{equation}\label{eq:inf_split}
\int\E[D_e^xT_0D_e^xT_t\mathbf{1}_{\{Z_0(e)>\gamma,Z_t(e)>\gamma\}}]\,dF(x)+\int\E[D_e^xT_0D_e^xT_t\mathbf{1}_{\{Z_0(e)\le\gamma\}\cup\{Z_t(e)\le\gamma\}}]\,dF(x).
\end{equation}
The former of the two terms above can be bounded in the same way as in the integer-valued setting. More precisely, appealing to~\eqref{eq:DT_bound} yields
\begin{equation*}
\int\E[D_e^xT_0D_e^xT_t\mathbf{1}_{\{Z_0(e)>\gamma,Z_t(e)>\gamma\}}]\,dF(x)\le\int(\mu+x)^2\,dF(x)\,\PP(Z_0(e)>\gamma,Z_t(e)>\gamma).
\end{equation*}
Now, with $A=\{Z_0(e)>\gamma,Z_t(e)>\gamma\}$ and $B=\{\omega_0(e)\le\gamma,\omega_t(e)\le\gamma\}$, we have $A\cap B\subseteq\{e\in\pi_0\cap\pi_t\}$. Since $A$ and $B$ are independent, and since $F$ has finite second moment, we obtain that
\begin{equation}\label{eq:inf_split_bound}
\int\E[D_e^xT_0D_e^xT_t\mathbf{1}_{\{Z_0(e)>\gamma,Z_t(e)>\gamma\}}]\,dF(x)\le C\frac{\PP(e\in\pi_0\cap\pi_t)}{F(\gamma)^2}
\end{equation}
for some finite constant $C$.

We proceed with the bound on the latter term of~\eqref{eq:inf_split}. Suppose first that $Y_0\le Y_t$, which also implies that $Z_0\le Z_t$. Then, on the interval $[r,Y_0]$ both $D_e^xT_0$ and $D_e^xT_t$ are negative and by~\eqref{eq:DTmu} lower bounded by $-\mu$, whereas on $[Y_t,\infty)$ both $D_e^xT_0$ and $D_e^xT_t$ are positive and by definition and~\eqref{eq:DT_bound} upper bounded by $H_0$ and $\mu+x$, respectively. Since for the remaining values of $x$ the functions have different signs, we obtain the upper bound
$$
\int D_e^xT_0D_e^xT_t\,dF(x)\le\mu^2F(Y_0)+H_0\int_{[Y_t,\infty)}(\mu+x)\,dF(x)\le \mu^2F(Y_0)+2\mu H_0.
$$
By assumption $Z_0\le Z_t$ and hence, on the event that either $Z_0\le\gamma$ or $Z_t\le\gamma$, we have that $Z_0\le\gamma$. On this event we have by Lemma~\ref{lma:H} that $H_0\le F(Y_0)$, which gives the further upper bound $(\mu^2+2\mu)F(Y_0)$. If instead $Z_t\le Z_0$ and either $Z_0<\gamma$ or $Z_t<\gamma$, we obtain analogously the upper bound $(\mu^2+2\mu)F(Y_t)$. This shows that
$$
\int D_e^xT_0D_e^xT_t\,dF(x)\mathbf{1}_{\{Z_0(e)\le\gamma\}\cup\{Z_t(e)\le\gamma\}}\le(\mu^2+2\mu)F(\overline Y(e)).
$$
where $\overline Y(e)=\min\{Y_0(e),Y_t(e)\}$.

Next, we note that for $y\ge0$ and $t\in[0,1)$ we have
$$
\PP\big(\omega_0(e)\le y,\omega_t(e)\le y\big)\ge\PP\big(\omega_0\le y\big| U(e)>t\big)\PP(U(e)>t)=(1-t)F(y),
$$
and hence that
$$
F(\overline Y(e))\le\frac{1}{1-t}\PP\big(\omega_0(e)\le \overline Y(e),\omega_t(e)\le\overline Y(e)\big|\mathcal{F}_e\big).
$$
Now either $\overline Y(e)=0$ or $\overline Y(e)<\min\{Z_0(e),Z_t(e)\}$, which gives the further upper bound
$$
F(\overline Y(e))\le \frac{1}{1-t}\PP\big(\omega_0(e)<Z_0(e),\omega_t(e)<Z_t(e)\big|\mathcal{F}_e\big)\le\frac{1}{1-t}\PP(e\in\pi_0\cap\pi_t|\mathcal{F}_e).
$$
Taking expectation results in the bound
$$
\int\E[D_e^xT_0D_e^xT_t\mathbf{1}_{\{Z_0(e)\le\gamma\}\cup\{Z_t(e)\le\gamma\}}]\,dF(x)\le\frac{\mu^2+2\mu}{1-t}\PP(e\in\pi_0\cap\pi_t),
$$
which together with~\eqref{eq:inf_split_bound} completes the proof.
\end{proof}

\subsection{Proof of Theorem~\ref{th:cont}}

We finally combine the upper and lower bounds on the co-influences to prove Theorem~\ref{th:cont}. For the lower bound we borrow a lemma from the theory of greedy lattice animals, in order to obtain a bound on the number of edges with low weight contained in a geodesic. The lemma stated below is proved in~\cite[Proposition~1.3]{damhanjanlamshe}, and is a strengthening of a result from~\cite[Theorem~6.6]{damhansos15}.

\begin{lemma}[{\cite[Proposition~1.3]{damhanjanlamshe}}]\label{lma:small_weights}
Suppose that $F(0)<p_c$. Then there exists a constant $C<\infty$ such that for all $v\in\Z^d$ and $\vep>0$
$$
\E\big[\#\big\{e\in\pi(0,v):\omega(e)\le r+\vep\big\}\big]\le C|v|F(r+\vep)^{1/d}.
$$
\end{lemma}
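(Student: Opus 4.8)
The plan is to combine the length bound~\eqref{eq:linear} for geodesics with a renormalisation at the scale $F(r+\vep)^{-1/d}$. Write $p:=F(r+\vep)$, call an edge \emph{low} if $\omega(e)\le r+\vep$, and set $N:=\#\{e\in\pi(0,v):\omega(e)\le r+\vep\}$. Since $N\le|\pi(0,v)|$ and $\E[|\pi(0,v)|]\le C|v|$, we may assume throughout that $p$ is small, as for $p$ bounded away from $0$ the claimed bound is immediate from~\eqref{eq:linear}. Before the renormalisation it is worth recording a softer argument that already gives the right order (indeed the stronger bound $C|v|p$ when $r>0$): compare $\E[T(0,v)]$ with $\E[T^{\setminus}(0,v)]$, where $T^{\setminus}(0,v)$ is the passage time after deleting all low edges, which is finite because the low edges percolate subcritically. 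Deleting the low edges only forces the geodesic to detour around the (small, subcritical) clusters of low edges it uses, and a comparison of the associated time constants gives $\E[T^{\setminus}(0,v)]-\E[T(0,v)]\le C|v|p$; conversely, when $r>0$ every low edge of $\pi(0,v)$ that is displaced costs a detour bounded below by a constant, so $\E[T^{\setminus}(0,v)]-\E[T(0,v)]\ge c\,\E[N]$, and the two bounds combine to $\E[N]\le C'|v|p$. This heuristic degrades when $r=0$, which is why a renormalised argument is needed in general.

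For the robust argument, fix a tiling of $\Z^d$ into boxes of sidelength $L\asymp p^{-1/d}$. Averaging over the offset of the tiling, each edge of $\pi(0,v)$ joins two distinct boxes with probability $\asymp 1/L$, so a geodesic of length $O(|v|)$ crosses only $O(|v|/L)$ box-boundaries and hence meets $O(|v|p^{1/d})$ boxes; we may therefore fix an offset with this property. At this scale a box contains $\Theta(1)$ low edges in expectation; call a box \emph{dense} if it contains more than a threshold number of low edges, the threshold chosen so that dense boxes are rare enough to percolate subcritically on the renormalised lattice, with clusters of exponentially small extent. Now split the low edges of $\pi(0,v)$ into those in non-dense boxes and those in dense boxes. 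In each non-dense box the geodesic collects $O(1)$ low edges, and since it meets $O(|v|p^{1/d})$ boxes the total from non-dense boxes is $O(|v|p^{1/d})$; for the dense boxes one uses both their subcriticality and the efficiency of the geodesic (that it neither visits many of them nor lingers inside) to show their contribution is likewise $O(|v|p^{1/d})$. Together with the exponential tail bound on $|\pi(0,v)|$, used to discard the event that the geodesic is atypically long (see e.g.~\cite[Section~4.1]{aufdamhan17}), this yields $\E[N]\le C|v|p^{1/d}$ with $C$ uniform in $v$ and $\vep$. For the details, which require some care in the choice of threshold and the treatment of dense boxes, one may follow~\cite[Proposition~1.3]{damhanjanlamshe}.

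The main obstacle is that one cannot simply invoke a greedy-lattice-animal bound: a union bound over the exponentially many self-avoiding paths of length $O(|v|)$ through the origin only produces paths carrying of order $|v|/\log(1/p)$ low edges, far more than the target $|v|p^{1/d}$ when $p$ is small. The argument must therefore exploit that $\pi(0,v)$ is a geodesic — in particular that, unlike a generic connected set of the same size, it crosses only $O(|v|p^{1/d})$ of the boxes of side $p^{-1/d}$ and does not loiter in the dense clusters — and extracting the exponent exactly $1/d$, rather than a weaker vanishing function of $F(r+\vep)$, is where the renormalisation has to be run carefully.
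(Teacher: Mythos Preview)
The paper does not prove this lemma; it is quoted from~\cite[Proposition~1.3]{damhanjanlamshe} and used as a black box, so there is no proof in the paper to compare your proposal against. Your outline is already more than the paper offers, and since you explicitly defer to the same source for the details, the two are consistent in that sense.

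A few remarks on the sketch itself. The averaging-over-offsets step is sound: averaging over offsets and over the randomness gives $O(|v|/L)$ box-boundary crossings, so some deterministic offset achieves this in expectation, and the non-dense contribution then follows. The real gap is the dense boxes. Saying the geodesic ``neither visits many of them nor lingers inside'' is where the substance lies, and nothing in your sketch explains how subcriticality of the dense boxes on the renormalised lattice converts into an $O(|v|p^{1/d})$ bound on the number of low edges the geodesic picks up there; a geodesic entering a dense cluster could in principle traverse many low edges, and one needs a quantitative argument (typically combining exponential decay of the dense-cluster size with the length control on $\pi$) to rule this out. Your first-paragraph heuristic for $r>0$ also rests on the unproved claim $\E[T^{\setminus}(0,v)]-\E[T(0,v)]\le C|v|p$, which is a continuity statement for the time constant that is not obvious. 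None of this is fatal given that you flag the argument as a sketch and point to~\cite{damhanjanlamshe} for the execution, but as written it is an outline rather than a proof.
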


\begin{proof}[Proof of Theorem~\ref{th:cont}.]
Suppose that $F$ has finite second moment and that $F(r)=0$.
From Proposition~\ref{prop:cov_infinite} and the monotonicity of the co-influences, in Lemma~\ref{lma:inf_inf}, we obtain
$$
\Var(T)=\int_0^1\sum_{e\in\Ec}\Inf_e(T_0,T_t)\,dt\le 2\int_0^{1/2}\sum_{e\in\Ec}\Inf_e(T_0,T_t)\,dt.
$$
Furthermore, by Proposition~\ref{prop:cont_upper}, there exists a universal constant $C<\infty$ such that $\Inf_e(T_0,T_t)\le 2C\,\PP(e\in\pi_0\cap\pi_t)$ for $t\in[0,1/2]$. Together with~\eqref{eq:overlap_id} this gives
\begin{equation}\label{eq:cont_upper}
\mbox{Var}(T)\le 4C\int_0^{1/2}\E[|\pi_0\cap\pi_t|]\,dt\le 4C\int_0^1\E[|\pi_0\cap\pi_t|]\,dt,
\end{equation}
which provides the upper bound on the variance.

Combining~\eqref{eq:expansion}, Proposition~\ref{prop:cont_upper} and~\eqref{eq:linear} we obtain for $t\le1/2$ that
$$
\Cov(T_0,T_t)=\Var(T)-2C\int_0^t\E[|\pi_0\cap\pi_s|]\,ds\ge\Var(T)-2Ct|v|.
$$
Consequently, for $t\ll\frac{1}{|v|}\Var(T)$ we find that $\Corr(T_0,T_t)\ge1-o(1)$ as $|v|\to\infty$.

Moving to the lower bound, we have by Proposition~\ref{prop:cov_infinite} that
$$
\Var(T)\ge\int_0^t\sum_{e\in\Ec}\Inf_e(T_0,T_s)\,ds
$$
for all $t\in[0,1]$. From Proposition~\ref{prop:inf_lower_bound} and~\eqref{eq:overlap_id} we obtain for every $\vep>0$ a constant $C(\vep)>0$ such that for all $t\in[0,1]$
\begin{equation}\label{eq:cont_lower}
\begin{aligned}
\Var(T)&\ge C(\vep)\int_0^t\sum_{e\in\Ec}\big[\PP(e\in\pi_0\cap\pi_s)-2\PP\big(\{e\in\pi_0\}\cap\{\omega_0(e)\le r+\vep\}\big)\big]\,ds\\
&= C(\vep)\bigg(\int_0^t\E[|\pi_0\cap\pi_s|]\,ds-2t\,\E\big[\#\big\{e\in\pi_0:\omega_0(e)\le r+\vep\big\}\big]\bigg).
\end{aligned}
\end{equation}
Taking $t=1$ we obtain the claimed lower bound from Lemma~\ref{lma:small_weights}.

Recall from Lemma~\ref{lma:overlap_monotonicity} that $\E[|\pi_0\cap\pi_t|]$ is monotone (non-increasing) as a function of $t$. It follows from~\eqref{eq:cont_lower} and Lemma~\ref{lma:small_weights} that for every $\vep>0$ there exists $C(\vep)>0$ such that for all $t\in(0,1)$ we have
\begin{equation}
\label{eq:overlapcont}
\E[|\pi_0\cap\pi_t|]\leq\frac1t\int_0^t\E[|\pi_0\cap\pi_s|]\,ds\leq C(\vep)^{-1}\frac{\Var(T)}{t}+2C|v|F(r+\vep)^{1/d},
\end{equation}
where $C$ is a fixed constant, not depending on $\vep$. Since $F(r)=0$, we may for every $\delta>0$ choose $\vep>0$ small so that 
$$
\E[|\pi_0\cap\pi_t|]\le C(\vep)^{-1}\frac{\Var(T)}{t}+\delta|v|.
$$
Assuming that $t\gg\frac{1}{|v|}\Var(T)$ gives $E[|\pi_0\cap\pi_t|]\le2\delta|v|$ for all but finitely many $v\in\Z^d$. Since $\delta>0$ was arbitrary, this establishes chaos for $t\gg\frac{1}{|v|}\Var(T)$.

If, in addition,~\eqref{eq:var_cond} holds, then $\Var(T)\le C|v|/\log|v|$ by~\eqref{eq:sublin}. The above then implies sublinear growth of the overlap, as $|v|\to\infty$, for $t\gg1/\log|v|$.
\end{proof}

\section{Multiple valleys}\label{sec:mv}

In this section we prove Theorem~\ref{th:mv}.  Suppose that $F$ is continuous and satisfies~\eqref{eq:var_cond}. For $i = 1,2, \dots, k$ let $\{ \omega^{\sss (i)}(e)\}_{e \in \Ec}$ independent families of i.i.d.\ edge weights with common distribution $F$. In addition, for $i=1, 2,\dots, k$, let $\{U^{\sss (i)}(e)\}_{e \in \Ec}$ be independent configurations of i.i.d.\ uniformly distributed random variables on the interval $[0,1]$. Starting from the usual weight configuration $\omega=\{\omega(e)\}_{e \in \Ec}$ at time $t=0$, we define independent perturbations of this weight configuration as
\begin{equation}
\omega_t^{\sss (i)}(e) = \begin{cases}
\omega(e) & \text{ if } U^{\sss (i)}(e)>t, \\
\omega^{\sss (i)}(e) & \text{ if } U^{\sss (i)}(e)\le t.
\end{cases}
\end{equation}
That is, in the resulting configuration $\omega_t^{\sss (i)}=\{\omega_t^{\sss (i)}(e)\}_{e\in\Ec}$ each edge has been resampled with probability $t$, and this is carried out independently for the $k$ configurations.

We note that the joint distribution of the pair $(\omega_t^{\sss (i)},\omega_t^{\sss (j)})$, for $i\neq j$, is equal to the joint distribution of the pair of configurations $(\omega_0,\omega_s)$, where $s=2t-t^2$. This is because, independently for each edge $e$, we will have $\omega_t^{\sss (i)}(e)=\omega_t^{\sss (j)}(e)=\omega(e)$, unless $\omega(e)$ has been replaced by an independent variable in one of two independent attempts, each having success probability $t$. Hence the probability of resampling equals $s=2t-t^2$.

Write $T_t^{\sss (i)}=T_t^{\sss (i)}(0,v)$ for the first passage time from $0$ to $v$ in the configuration $\{\omega_t^{\sss (i)}(e)\}_{e \in \Ec}$ and write $\pi_t^{\sss (i)}=\pi_t^{\sss (i)}(0,v)$ for the corresponding geodesic, which is almost surely uniquely defined, since we assume that $F$ is continuous. Similarly, write $T=T(0,v)$ and $\pi=\pi(0,v)$ for the passage time and geodesic between $0$ and $v$ in the initial configuration $\{\omega(e)\}_{e \in \Ec}$, and note that $T_0^{\sss (i)}=T$ and $\pi_0^{\sss (i)}=\pi$ for all $i$. Finally, recall that $T(\Gamma)$ denotes the weight-sum along the path $\Gamma$, and hence that $T(\pi_t^{\sss (i)})$ denotes the travel time of the path $\pi_t^{\sss (i)}$, in the initial configuration $\{\omega(e)\}_{e \in\Ec}$.

We introduce a notation for the {\bf maximum overlap} between the paths $\pi$ and $\pi_t^{\sss (i)}$, for $i=1,2,\ldots,k$, as
$$
O_k := \max \Big\{\max_{\substack{i,j=1, \ldots, k \\i \neq j}} \left|\pi_t^{\sss (i)} \cap \pi_t^{\sss (j)}\right|, \max_{i=1, \ldots, k} \left|\pi \cap \pi_t^{\sss (i)}\right|\Big\},
$$
and the {\bf maximum time difference} of the same paths as
$$
{\Delta T}_k := \max_{i=1, \ldots, k} \big[T(\pi_t^{\sss (i)}) - T\big].
$$

In the remainder of this section our goal will be to show that, as $|v|\to\infty$, we may choose $t=t(v)\to0$ and $k=k(v)\to\infty$ in such a way that $O_k=o(|v|)$ and $\Delta T_k=o(|v|)$ with probability tending to 1. This will complete the proof of Theorem~\ref{th:mv}. \vspace{0.1cm}

\noindent
{\bf Maximum overlap.} It follows from~\eqref{eq:sublin} and~\eqref{eq:overlapcont} that for every $\vep>0$ there exists a constant $C(\vep)>0$ such that for all $t>0$ and $v\in\Z^d$ we have
\begin{equation}\label{eq:over_bound}
\E\big[\big|\pi \cap \pi_t^{\sss (i)}\big|\big] \leq  \frac{|v|}{C(\vep) t \log |v|} + C'\frac{|v| F(r+\vep)^{1/d}}{t},
\end{equation}
where $C'$ is a universal constant. Denote the right-hand side in~\eqref{eq:over_bound} by $\psi(v,\vep,t)$, and note that since $s\ge t$ we obtain for $i\neq j$ that also $\E[|\pi_t^{\sss (i)}\cap\pi_t^{\sss (j)}|]\le \psi(v,\vep,t)$, and hence that
\begin{equation}\label{eq:EOk}
\E[O_k]\le\psi(v,\vep,t)(k+1)^2.
\end{equation}
Set $t=t(\vep)=F(r+\vep)^{1/2d}$ so that $t(\vep)\to0$ as $\vep\to0$. Since $t(\vep)>0$ and $C(\vep)>0$ for all $\vep>0$, we may choose $\vep=\vep(v)\to0$ such that $t(\vep)C(\vep)\sqrt{\log|v|}\ge1$ as $|v|\to\infty$. Then
$$
\psi(v):=\psi\big(v,\vep(v),t(\vep(v))\big)=o(|v|).
$$
For $k\le k^*(v):=(|v|/\psi(v))^{1/4}-1$ it follows from~\eqref{eq:EOk} that $\E[O_k]\le\sqrt{|v|\psi(v)}$. So, with $\alpha(v):=|v|^{3/4}\psi(v)^{1/4}$ we have $\alpha(v)=o(|v|)$ and Markov's inequality gives
\begin{equation}\label{eq:Ok}
\PP\big(O_k>\alpha(v)\big)\le\frac{\sqrt{|v|\psi(v)}}{|v|^{3/4}\psi(v)^{1/4}}=o(1),
\end{equation}
uniformly in $k\le k^*(v)$. \vspace{0.1cm}

\noindent
{\bf Maximum time difference.} Since $T$ and $T_t^{\sss (i)}$ are equal in distribution, it follows that
$$
\E\big[T(\pi_t^{\sss (i)})-T\big]=\E\big[T(\pi_t^{\sss (i)})-T_t^{\sss (i)}\big]\le\E\Bigg[\sum_{e\in\pi_t^{\sss (i)}}\big|\omega(e)-\omega_t^{\sss (i)}(e)\big|\Bigg].
$$
Using that
$$
\big|\omega(e)-\omega_t^{\sss (i)}(e)\big|\le\big(\omega(e)+\omega^{\sss (i)}(e)\big){\bf 1}_{\{U^{\sss (i)}(e)\le t\}},
$$
that $\pi$ and $\pi_t^{\sss (i)}$ are equal in distribution, and that $\pi$ is a function of $\omega$, we find through conditioning on $\pi$, due to the independence of $\omega(e)$, $\omega^{\sss (i)}(e)$ and $U^{\sss (i)}(e)$, that
$$
\E\big[T(\pi_t^{\sss (i)})-T\big]\le\E\bigg[\sum_{e\in\pi}\big(\omega(e)+\omega^{\sss (i)}(e)\big){\bf 1}_{\{U^{\sss (i)}(e)\le t\}}\bigg]=t\,\E[T]+t\,\E[\omega^{\sss (i)}(e)]\E[|\pi|].
$$
From subadditivity and by~\eqref{eq:linear} we conclude that this is at most $tC|v|$ for some constant $C$, and hence that
$$
\E[\Delta T_k]\le ktC|v|.
$$
Now, let $t=t(v)=t(\vep(v))$ and $k^*(v)$ be defined as above, so that $t(v)\to0$ and $k^*(v)\to\infty$ as $|v|\to\infty$. Set $k(v):=\lfloor\min\{t(v)^{-1/2},k^*(v)\}\rfloor$ and $\beta(v):=t(v)^{1/4}|v|$, so that also $k(v)\to\infty$ and $\beta(v)=o(|v|)$ as $|v|\to\infty$. Then, Markov's inequality gives
\begin{equation}\label{eq:DTk}
\PP\big(\Delta T_{k}>\beta(v)\big)\le C\frac{k(v)t(v)|v|}{t(v)^{1/4}|v|}\le Ct(v)^{1/4}=o(1),
\end{equation}
uniformly in $k\le k(v)$. \vspace{0.1cm}

To conclude, it follows from~\eqref{eq:Ok} and~\eqref{eq:DTk} that there exists a sequence $(k(v))_{v\in\Z^d}$ such that $k(v)\to\infty$ as $|v|\to\infty$ and so that with high probability $O_{k(v)}=o(|v|)$ and $\Delta T_{k(v)}=o(|v|)$. This completes the proof of Theorem~\ref{th:mv}.

\bigskip
{\bf Acknowledgement:} The authors are grateful for the comments received from an anonymous referee which helped clarify some parts of the arguments. This work was partially supported by the Swedish Research Council (VR) through the grant numbers 2021-03964 (DA) and 2020-04479 (MD and MS).


\newcommand{\noopsort}[1]{}\def\cprime{$'$}

\end{document}